\documentclass[11pt]{article}
\usepackage{makeidx}
\makeindex
\usepackage[english]{babel}

\usepackage{amsmath, amsthm, amssymb}
\usepackage[utf8]{inputenc}

\usepackage{graphicx}
\usepackage{psfrag}
\usepackage{color}
\usepackage{comment}

\newcommand{\R}{\mathbb{R}}

\newcommand{\N}{\mathbb{N}}

\newcommand{\B}{\mathbb{B}}

\def\dd{\mathrm{d}}

\newtheorem{thm}{Theorem}[section]
\newtheorem{cor}[thm]{Corollary}
\newtheorem{lem}[thm]{Lemma}
\newtheorem{prop}[thm]{Proposition}

\newtheorem{defn}[thm]{Definition}
\newtheorem{rem}[thm]{Remark}
\numberwithin{equation}{section}

\begin{document}

\title{Necessary Conditions for Free End-Time Sweeping Process Problems}


\author{ Piernicola Bettiol\footnote{ {\it Univ Brest, UMR CNRS 6205, Laboratoire de Math\'ematiques de Bretagne Atlantique, F-29200 Brest, France, e-mail: \/}
{\tt piernicola.bettiol@univ-brest.fr}}, $\;$  
Paulin Bruneau\footnote{ {\it Univ Brest, UMR CNRS 6205, Laboratoire de Math\'ematiques de Bretagne Atlantique, F-29200 Brest, France, e-mail: \/}
{\tt paulin.bruneau@univ-brest.fr}}
 $\;$ and $\;$
J\'er\'emy Rouot\footnote{ {\it  Univ Brest, UMR CNRS 6205, Laboratoire de Math\'ematiques de Bretagne Atlantique, F-29200 Brest, France, e-mail: \/}
{\tt jeremy.rouot@univ-brest.fr}}}
  
	\maketitle

\begin{abstract}
{\small 
This paper derives necessary optimality conditions for free end-time optimal control problems governed by sweeping process dynamics. In contrast to classical optimal control problems, the presence of the normal cone term in the state equation introduces discontinuities with respect to the state variable and prevents the standard Lipschitz regularity assumptions typically used in optimal control theory. As a result, both the analysis of trajectories and the formulation of optimality conditions require refined tools. 
We address problems with general endpoint constraints and time-dependent data, considering both the case in which the dynamics are Lipschitz continuous in time and the more challenging setting of merely measurable time dependence. Special attention is devoted to the additional transversality conditions associated with the free end-time variable. In the Lipschitz case, these conditions are expressed in terms of a function of bounded variation that coincides almost everywhere with the maximized Hamiltonian. In the measurable case, we show that a suitable interpretation can be obtained via the notions of sub- and super-essential values of the maximized Hamiltonian, recently introduced in the literature, leading to strengthened transversality conditions. These conditions also involve an additional term reflecting the interaction between the costate arc and the normal cone to the moving set at the optimal endpoints.
Our approach combines suitable perturbation techniques with a penalization method tailored to sweeping processes. The moving set is modeled as a finite intersection of inequality-defined sets satisfying suitable constraint qualifications, including a positive linear independence condition and a diagonal dominance property of the associated Gramian matrix. We also introduce a local version of the constraint qualification condition. Our results apply to a broad class of problems, including those involving unbounded moving sets such as polyhedra. The resulting optimality system extends and sharpens recent results for classical free end-time problems to the non-Lipschitz, discontinuous framework of sweeping processes. The relevance of the transversality conditions obtained in this article is illustrated through two simple examples.
}
\vspace{0.05 in}

\noindent
{\small Keywords: Free End-Time Problems, Sweeping Processes, Necessary Conditions, Nonsmooth Analysis}

\bigskip
\noindent
{\small AMS subject classifications: 49K15, 49K21, 49J52.}

\end{abstract}

\section{Introduction}

In this paper, we derive first order necessary conditions of optimality for free end-time dynamic optimization problems involving sweeping processes of the following form:

\[ (P) \hspace{0.4 in} \left\{
\begin{array} {l}
\mbox{Minimize } g(S,x(S),T,x(T)) \\
\mbox{over intervals } [S,T], \mbox{ arcs }
x  \in W^{1,1}([S,T];\R^n) \\
\mbox{\hspace{0.5 in}and measurable functions }
u :[S,T]\rightarrow \R^{m} \mbox{ satisfying } \\
\dot{x}(t) \in f(t,x(t), u(t)) -N_{C(t)}(x(t)) \mbox{ a.e. } t \in [S,T], \\
u(t) \in U(t) \mbox{ a.e. } t \in [S,T],
\\
(S, x(S),T, x(T))\in E\,.
\end{array}
\right.
\]

\noindent The data for this problem comprise positive integers $n$ and $m$,
functions $g:\R \times \mathbb{R}^{n}\times \R \times \mathbb{R}^{n} \to \R$ and
$f:\R \times \R^n\times \R^{m} \rightarrow \R^n$, non-empty multifunctions $U:\R \leadsto \R^{m}$ and $C:\R \leadsto \R^n$, and a closed set $E\subset \R^{1+n+1+n}$. 

Problem (P) is referred to as a `\textit{free end-time}' problem because the end-times $S$ and $T$ for the arcs $x  \in W^{1,1}([S,T];\R^n)$ (the \textit{state trajectories}) and measurable functions $u :[S,T]\rightarrow \R^{m} $ (the \textit{controls}) in (P) are considered as part of the choice variables (and they also appear in the cost $g$ to be minimized and in the endpoint constraint $E$). 
Then, a {\em process} for the sweeping process of (P) is taken to be a triple $([S,T],x ,u)$ in which $[S,T]$ is a
\textit{time interval}, $u:[S,T] \rightarrow \R^{m}$ is a control and the state trajectory $x \in W^{1,1}([S,T];\R^{n})$ is the corresponding solution of the differential equation satisfying
\begin{equation*}
    (SP)\; \left\{
    \begin{aligned}
        &\dot{x}(t) \in  f(t,x(t),u(t)) -N_{C(t)}(x(t)), \qquad \text{a.e. } t \in [S,T],\\
        &u(t) \in U(t), \qquad \text{a.e. } t \in [S,T],\\
        &x(S)\in C(S).
    \end{aligned}
    \right.
\end{equation*}
An important special case of problem (P) is the minimum time optimal control problem, in which the trajectory starts from a prescribed initial state  {$x_0\in C(S_0)$} at a (fixed) initial time $S_0$ and is required to reach a given target $E_T\subset \R^n$ (in this case, we take $E=\{S_0\} \times \{x_0\} \times \R \times E_T$). 

We say that a process $([S,T],x,u)$ is admissible for $(P)$ if it satisfies all the constraints of $(P)$, that is it is a solution of $(SP)$ and the end-point constraint $(S,x(S),T,x(T)) \in E$ is satisfied.

An admissible process $([\bar{S},\bar{T}], \bar{x} ,\bar{u})$, is said to be a local minimizer for $(P)$
if there exists $\delta > 0$ such that
$$
g(S,x(S),T,x(T)) \geq g(\bar{S},\bar{x}(\bar{S}),\bar{T},\bar{x}(\bar{T}))
$$
for every admissible process $([S,T],x,u)$ of ($P$) such that
\begin{equation}\label{metric}
|S - \bar S  |    +  |T - \bar T | + ||x_e - \bar x_e ||_{L^\infty([S\wedge \bar S,T \vee \bar T];\R^n)} \leq  \delta.
\end{equation}
Here, given $a,b \in \R$, we set $a\wedge b:= \min\{a,b\}$, $a\vee b:= \max\{a,b\}$ and $x_e$ denotes the extension to $(-\infty, + \infty)$ of the function $x:[S,T] \rightarrow \R^n$, defined as follows: 
$$
x_e(t):= \left\{ 
\begin{array}{ll}
x(S)& \mbox{if  } t < S
\\
x(t)& \mbox{if  } t \in [S,T] 
\\
x(T)& \mbox{if  } t > T\,.
\end{array}
\right.
$$ 
According to this extension, we set $\dot{x}_e(t):=0$ whenever $t \notin [S,T]$.

It is well known in the literature (cf. \cite{CV89_1,CV89_2,Vinter2010,Bettiol-Vinter-book2024} and the references therein) that, when deriving necessary optimality conditions for `classical' (i.e. in the absence of the sweeping process term) free end-time problems, it is also crucial to take into account the additional information arising from the extra degree of freedom provided by the end-time variable. When $U(.)\equiv U$ is constant and the dynamics function $f$ is Lipschitz continuous w.r.t. the time variable $t$ (the autonomous case being a particular instance of this class), the standard interpretation of these additional necessary conditions is typically expressed in terms of the boundary conditions of an absolutely continuous function $a(.)$ evaluated at the optimal end-times $\bar S$ and $ \bar T$. This function coincides almost everywhere with the maximized Hamiltonian function evaluated along the minimizer $\bar x$ and the corresponding costate arc $p$, namely the mapping $t\mapsto H(t,\bar x(t), p(t))$.

However, when the data are merely time-measurable, this is no longer possible in general and therefore requires a suitable interpretation. A well-established answer to this issue, due to Clarke and Vinter \cite{CV89_1,CV89_2}, is to express the boundary conditions arising from the extra degree of freedom in terms of the essential values of the measurable function $t \mapsto H(t, \bar x (t), p(t))$ evaluated at the optimal end-times $\bar S$ and $ \bar T$. In a recent paper \cite{Bettiol-Vinter-SICON2025}, the new notions of sub- and super-essential values have been introduced. These concepts allow improvements of earlier results and yield stronger transversality conditions for free end-time sweeping process problems. The sub- and super-essential values associated with the maximized Hamiltonian $t \mapsto H(t, \bar x (t), p(t))$ correspond, respectively, to the limiting subdifferential and superdifferential of the associated integral function (cf. Definition \ref{def1} below).

Problem (P) differs substantially from classical free end-time dynamic optimization problems due to the presence of the term `$N_{C(t)}(x)$', which is discontinuous with respect to $x$ on the boundary of $C(t)$, on the right side of the dynamic constraint in (P). As a consequence, the standard Lipschitz regularity with respect to the state variable (usually required in the optimal control literature to derive first-order necessary conditions) is no longer guaranteed. Given a control function $u$, establishing the existence of a solution $x$ to the sweeping dynamic constraint (SP) may even become problematic unless suitable assumptions on the moving set 
$C(t)$ are imposed (with \textit{prox-regularity} being a standard one; see, for example, \cite{ANTh,N_Th,CT,BeVe,Thib} along with the references they cite). Therefore, dynamic optimization problems involving sweeping processes are somewhat nonstandard, since the dynamic constraint exhibits a discontinuous behaviour on the boundary of $C(t)$, and thus represents a class of highly nontrivial problems. 
Only in the last decade there has been significant progress in the optimal control of sweeping processes, with several sets of necessary optimality conditions being derived. Starting with \cite{BK}, various approaches for deriving such conditions are now available in the literature and may be summarized as follows. These include an exponential penalization approach coupled with a suitable constraint qualification, as well as a penalization method involving the distance function that provides more refined convergence estimates for controls and trajectories (cf., respectively, \cite{dPFS2019,dPFS2023,NZ2023,NZ2024,ChamZeid} and \cite{arc,BCG2025}, and cited works). Alternative techniques rely on reformulating the sweeping process problem as a classical state-constrained problem; however, the resulting maximum principle is not necessarily in normal form and may exhibit degeneracy (cf. \cite{HP2022} and, for discrete approximation or catching-up techniques, \cite{CCMN,GCPG,CMNN2024,CMNNT2025} and references cited therein).

To the best of our knowledge, the only contributions in the literature on necessary optimality conditions for free end-time dynamic optimization problems involving sweeping processes are \cite{CMNN2024} and \cite{CMNNT2025}, and for a particular class of problems. More precisely, in \cite{CMNN2024} an autonomous problem (i.e. $f$ and $U$ are time independent) is investigated; the initial state is fixed, the moving set $C(t)$ is assumed to be a convex polyhedron, and the right endpoint constraint has a separate structure (that corresponds to the particular case of the reference problem (P) obtained by taking $E=\{S_0\} \times \{x_0\} \times E_1 \times E_T$). In \cite{CMNNT2025}, the authors investigate a class of problems that share a structural resemblance with those studied in \cite{CMNN2024}, with the key distinction that the moving set is allowed to depend not only on the state variables but also on the control parameters. This additional dependence introduces new analytical challenges and enriches the problem framework. The authors derive necessary optimality conditions for the associated discrete-time problem and further demonstrate the applicability of their theoretical results through illustrative examples drawn from motion models.

In the present paper, we derive necessary optimality conditions for problem (P), which involves a general left/right endpoint constraint 
$E$ and time-dependent data. We consider both the case in which the dynamics function $f$ is Lipschitz continuous in $t$ (which covers the simpler autonomous setting studied in \cite{CMNN2024} and \cite{CMNNT2025}) and the more complicated time-measurable case, obtaining sets of necessary conditions that extend to problems involving sweeping processes the recent results derived for `classical' free end-time dynamic optimization problems in \cite{Bettiol-Vinter-SICON2025} (cf.\ \cite[Chapter~9]{Bettiol-Vinter-book2024}). We emphasize that, for measurable time-dependent problems, the transversality conditions associated with the additional degree of freedom involve the refined notions of sub- and super-essential values, together with an additional term involving the scalar product between the costate arc $p$ and a normal vector to the moving set $C(t)$ evaluated at the optimal end-times $\bar S$ and $ \bar T$. This aspect is particularly significant, as it reveals the subtle interplay between the transversality conditions and the moving set $C(t)$  that characterizes the problem (see Examples 1 and 2 below and the related comments). In the Lipschitz time-dependent case, we end up again with information provided by the extra degree of freedom (associated with the end-time variable), which is expressed in terms of a function $a(\cdot)$ of bounded variation (in our case), that coincides almost everywhere with the maximized Hamiltonian.

 We shall consider the case in which the time dependent (moving) set $C(t)$ is nonempty and is a finite intersection of sets $C_j(t)$, each admitting a functional inequality representation:
\[
    C(t):=\bigcap_{j=1}^J C_j(t),\qquad
    C_j(t):=\left\{x\in\mathbb{R}^n:\; h_j(t,x)\le 0\right\},
\]
for some given functions $h_j: \R\times \R^n \to \R$,  \(j=1,\dots,J\). As in \cite{BCG2025} and other papers dealing with possibly nonsmooth sets $C(t)$ such as \cite{CMNN2024,dPFS2023,ChamZeid,NZ2024}, we assume that $C(t)$ satisfies a positive linear independence constraint qualification together with a `diagonal dominance condition' of the Gramian matrix of the gradients of the functions $h_j$ near the boundary of the moving set $C(t)$ (and near the reference minimizer). In contrast with \cite{BCG2025}, we work directly with the defining functions $h_j$'s, without passing to the (signed) distances to the sets $C_j(t)$. This simplifies the analysis and thereby avoids the introduction of an additional (very restrictive) technical assumption required in \cite{BCG2025} (see condition (2.9) in \cite{BCG2025}). 
We consider a refined global constraint qualification, and we introduce a `local' constraint qualification. Both conditions allow us to cover situations in which the moving set $C(t)$ is possibly unbounded, such as polyhedra. 
Our local constraint qualification combines a positive linear independence property with a merely local prox-regularity condition, which is required to be satisfied only on a tubular neighbourhood of a reference arc. 
Our proof techniques are based on Clarke's perturbation methods. We point out an additional important improvement: in our approach to obtaining non-degenerate necessary conditions (with general left/right endpoint constraints), one must apply Ekeland's variational principle in an appropriate (non-trivial) way that differs substantially from previous work (see, e.g., \cite{BCG2025}). 
We highlight that our approach allows us to cover general (merely closed) endpoint constraints, whereas in \cite{CMNN2024,dPFS2023,ChamZeid,NZ2024} at least one component of the endpoint constraints must be contained in the graph of $C(.)$. 
A further relevant consequence of our results is that, when the minimizing arc remains in the interior of the moving set (which is always the case when $C \equiv \mathbb{R}^n$), our set of necessary conditions does coincide with the standard necessary conditions (including the transversality condition) that have been established for `classical' (i.e., in the absence of the sweeping process term) free end-time optimal control problems, cf.\ \cite{Bettiol-Vinter-SICON2025} and \cite[Chapter~9]{Bettiol-Vinter-book2024}.
We emphasize also that our results do not require any convexity assumptions on the controlled dynamics function $f$.

\medskip

\noindent
We conclude this introduction by providing points of notation and terminology and by defining the concepts of sub- and super-essential values, which play a crucial role in the transversality conditions in the measurable time-dependent case. Then, the paper is organized as follows. In Section 2, we present the main necessary conditions results for problems involving, first, a measurable time-dependent dynamics function and, subsequently, the Lipschitz time-dependent case. 
The relevance of the new transversality conditions for the measurable time-dependent case is illustrated through two simple examples. In both measurable and Lipschitz time-dependent situations, we provide normality results. Section 3 presents the preliminary analysis, focusing in particular on suitable approximation techniques that form the foundation for constructing the auxiliary problems used in the perturbation approach adopted in our paper. Building on these results, Section 4 provides a proof of the necessary optimality conditions in the measurable time-dependent setting. Finally, Section 5 is devoted to establishing the maximum principle in the case of Lipschitz continuous time dependence.

\ \\

\noindent
{\bf Notation:} Given an interval $[S,T]$ and $p\ge 1$, $L^{p}([S,T];\mathbb{R}^n)$ denotes the space of $\mathbb{R}^n$-valued functions on $[S,T]$ whose $p$-th power is integrable when $1\le p<\infty$, and that are essentially bounded when $p=\infty$. We simply write $L^{p}(S,T)$ for $L^{p}([S,T];\R)$. $W^{1,p}([S,T]; \R^n)$ is the space of absolutely continuous functions whose weak derivative belongs to  $L^{p}([S,T];\mathbb{R}^n)$. 
By $NBV([S,T];\R^{n})$ we denote the space of $\R^n$-valued functions of bounded variation defined on $[S,T]$ which are `{\it normalized}' in the sense that they are right-continuous on $(S,T)$. 
The Euclidean norm of a vector $x\in \R^n$ is written $|x|$. The closed unit ball in Euclidean space is written $\B$. 
${\rm co}\,D$ denotes the convex hull of the set $D$. 
For measurable functions $u:[S,T]\to\R^m$ and $v:[S',T']\to\R^m$, we define
\[
d_{\mathcal E}(u,v) := \mathrm{meas}\,\{\,t\in[S,T] \cap [S',T'] : u(t)\neq v(t)\,\}.
\]
Here, `$\mathrm{meas}$' denotes the Lebesgue measure on $\R$.
For a multifunction $\Gamma:\R \leadsto \R^k$, its graph is  
${\rm Gr}\,\Gamma = \{(t,u)\in \R\times\R^k : u\in \Gamma(t)\}$.
The positive part of a function $h: \R^k \to \R$ is denoted  {by} $h^+(\cdot)=h(\cdot)\vee 0$.
   
We make use of some constructs from nonsmooth analysis, described in detail, for example, in \cite{Vinter2010,Bettiol-Vinter-book2024}. Given a closed set $C\subset \mathbb{R}^n$ and a point $x\in \mathbb{R}^n$, we denote by $d_C(x)$ the distance of $x$ from $C$ and we denote by $\pi_C(x)$ the set of all points in $C$ minimizing this distance. 
 {The boundary of $C$ is written $\partial C$. 
}Given a point $x\in C$,  the {\em proximal normal cone of $C$ at $x$} is defined by

$$
N_C^P(x)\hspace{-1mm}:=\hspace{-1mm} \{
\zeta \in \R^{n}: 
\exists 
M>0 \mbox{ {\it s.t.} } \zeta \cdot (y-x) \leq M|x-y|^{2} \mbox{ {\it for all} } y \in C
\}.
$$

\noindent
The {\em (limiting) normal cone of $C$ at $x$} is
$$
N_C(x):=\{\lim_{i\to\infty}\zeta_i \,:\, \zeta_i\in N^P_C(x_i) \mbox{ {\it and} }x_i\in C \mbox{ {\it for all }}i, \mbox{{\it and} } x_i\to x\}\,.
$$
Given a lower semicontinuous function $f:\R^n\to \R\cup\{+\infty\}$ and a point $ x$ such that $f(x)  < +\infty$, the {\em proximal subdifferential of $f$ at $x$} is 
$$
\partial^Pf(x):= \{
\zeta\in\R^n: \exists \;M>0, \epsilon>0 \mbox{ such that }
  f(y)-f(x)\geq \zeta \cdot (y-x) - M |y-x|^2, \mbox{ for all } y\in x+\epsilon \B \}.
$$
The {\em limiting subdifferential of $f$ at $x$}, $\partial f(x)$, is the set
$$
\partial f(x):=\{\lim_{i\to\infty}\zeta_i:\zeta_i\in \partial^P f(x_i), x_i\to x, f(x_i)\to f(x)\}\,.
$$
The {\em limiting superdifferential of $f$ at $x$}, is the set $-\partial( - f)(x)$.
When  the function $f$ depends on two variables, we write $\partial_x f (\bar x, \bar y)$ to denote the limiting subdifferential of $x\to f(x,\bar y)$ at $\bar x$.  

\noindent
Let $r>0$ be given. A nonempty closed set $C \subset \mathbb{R}^n$ is said to be
$r$-prox-regular if for every $x \in C$, every $\xi \in N_C^P(x)\cap \mathbb{B}$,
and every $t \in (0,r)$, one has $x \in \pi_C(x+t\xi)$.
The set $C$ is said to be $r$-prox-regular at a point $x \in C$ if there exists
a neighborhood $X$ of $x$ such that $C \cap X$ is $r$-prox-regular.

\noindent
 {When we consider the measurable time-dependent case, we employ the notions of sub- and super-essential values of indefinite integral functions introduced in \cite{Bettiol-Vinter-SICON2025} (cf. \cite[Chapter 9]{Bettiol-Vinter-book2024}), which we recall below. 
Take a function $\phi \in  {L^1(S,T)}$.  Fix any $S'\in [S,T]$ and define the absolutely continuous function $\psi:[S,T] \rightarrow \R$ to be 
\begin{equation}
\label{int}
\psi(t):= \int_{S'}^t \phi(s)ds\,.
\end{equation}
}
 
\begin{defn}\label{def1}
The sub- and super-essential values of the function $\phi \in L^1(S,T)$ at $ \bar t \in (S,T)$ are defined as follows.
\begin{itemize}
\item[$(1)$:] {\it (Sub-Essential Value)}
\begin{eqnarray*}
&&
\underset{t \rightarrow \bar t}{\mbox{sub-ess}}\, \phi(t) \,:=  \Big\{ \zeta \in \R \,: \exists \;t_i \rightarrow \bar t \mbox{ and }  \zeta_i \rightarrow \zeta \mbox{ s.t. }
\\
&&
\hspace{1.3 in}
 \underset{\epsilon \downarrow 0}{\limsup} \, \epsilon^{-1} \int_{t_i - \epsilon}^{t_i} \phi(s)ds \leq    \zeta_i   \leq   \underset{\epsilon \downarrow 0}{\liminf} \, \epsilon^{-1} \int_{t_i}^{t_i + \epsilon} \phi(s)ds, \mbox{ for each }i
 \Big\}\,;
\end{eqnarray*}
\item[$(2)$:] {\it (Super-Essential Value)}
\begin{eqnarray*}
&&\underset{t \rightarrow \bar t}{\mbox{super-ess}}\, \phi(t) \,:= \Big\{ \zeta \in \R \,: \exists \; t_i \rightarrow \bar t \mbox{ and }  \zeta_i \rightarrow \zeta \mbox{ s.t. }
\\
&&
\hspace{1.3 in} \underset{\epsilon \downarrow 0}{\limsup} \, \epsilon^{-1}\int_{t_i }^{t_i+ \epsilon} \phi(s)ds \leq    \zeta_i   \leq   \underset{\epsilon \downarrow 0}{\liminf} \epsilon^{-1}\int_{t_i - \epsilon}^{t_i } \phi(s)ds, \mbox{ for each }i
 \Big\}\,.
\end{eqnarray*}
\end{itemize}
Given a function of two variables $\phi(t,x)$,  $\underset{t \rightarrow \bar t}{\mbox{sub-ess}}\, \phi(t,x) $ and  $\underset{t \rightarrow \bar t}{\mbox{super-ess}}\, \phi(t,x) $ indicate the sub- and super-essential values of $t \rightarrow \phi(t,x)$ respectively, for fixed $x$.
\end{defn}
The sub- and super-essential values of $\phi$ satisfy several useful properties, and for these we refer the reader to \cite{Bettiol-Vinter-SICON2025} and \cite[Chapter 9]{Bettiol-Vinter-book2024}. Here, we highlight only that these concepts precisely capture the sub- and super-differentials of the associated indefinite integral function $\psi$ defined in \eqref{int}:
$$
\underset{t \rightarrow \bar t}{\mbox{sub-ess}}\, \phi(t) = \partial \psi(\bar t) \; \mbox{ and } \;  
\underset{t \rightarrow \bar t}{\mbox{super-ess}}\, \phi(t) = - \partial (-\psi)(\bar t)\,.
$$

\section{Necessary Conditions For Free End-Time Sweeping Process Problems}

 {We begin by presenting a maximum principle for the more general case involving measurable time dependent data. We then provide the necessary optimality conditions for the Lipschitz time dependent case.
}

\subsection{The Measurable Time-dependent Case}

In what follows, $I(t,x)$ is the set of {\it active constraints} defined, for all $(t,x) \in \R \times \R^n$, by
\[
I(t,x):=\left\{\,j\in\{1,\dots,J\}:\, h_j(t,x)=0\,\right\}
\]
and, for $\eta>0$, $I_\eta^+(t,x)$ is the set of active indices `near the boundary from the outside':
\[
I_\eta^+(t,x):=\left\{\,j\in\{1,\dots,J\}:\, 0\le h_j(t,x) \le \eta\,\right\}.
\]
 {We shall assume the following hypotheses (H1)--(H5) on the problem data, for a local minimizer $([\bar S, \bar T],\bar x, \bar u)$ and for some $\bar \delta>0$.
}\begin{itemize}
\item[(H1):] The function \(g\) is Lipschitz continuous on $(\bar S, \bar x (\bar S), \bar T,\bar x (\bar T)) + \bar \delta \B$, and \(E\) is a closed subset of $\R\times\R^n\times\R\times\R^n $.

\item[(H2):] \(\mathrm{Gr}\,U\) is an \(\mathcal{L}\times\mathcal{B}^{m}\)-measurable subset of \(\mathbb{R}\times\mathbb{R}^{m}\).

\item[(H3):]
\begin{itemize}
  \item[(i):] For every \(x\in\mathbb{R}^n\), the map \((t,u)\mapsto f(t,x,u)\) is \(\mathcal{L}\times\mathcal{B}^{m}\)-measurable;
  \item[(ii):] there exist $k_f \in L^1(\bar S- \bar \delta, \bar T +\bar \delta)$ and $c_f>0$ such that
  \[
  |f(t,x,u)-f(t,x',u)|\le k_f(t)|x-x'|,\qquad |f(t,x,u)|\le c_f,
  \]
  for all $x,x'\in \bar x_e (t) + \bar \delta \B$ and all $u\in U(t)$, a.e. $t\in [\bar S- \bar \delta, \bar T +\bar \delta] $.
\end{itemize}

 {\item[(H4):] For some $\eta_0>0$ we have: 
\begin{itemize}
  \item[(i):] the time dependent set $C(t)$ is nonempty and has the following representation
    \[
    C(t):=\bigcap_{j=1}^J C_j(t),\qquad
    C_j(t):=\left\{x\in\mathbb{R}^n:\; h_j(t,x)\le 0\right\},
    \]
    where, for each \(j=1,\dots,J\), $h_j: [\bar S- \bar \delta, \bar T +\bar \delta]\times \R^n \to \R$ is of class $C^{1+}$ (i.e. everywhere Fr\'echet differentiable with locally Lipschitz continuous derivatives);
  \item[(ii):] 
  $\nabla_x h_j(t,x)\neq 0$ for all $j \in I(t,x)$, $(t,x) \in [\bar S- \bar \delta, \bar T +\bar \delta] \times \R^n$; there exists $K_h>0$ such that 
  \[
  \sup_{j\in I_{\eta_0}^+(t,x)}| \nabla h_j(t,x)| \le K_h, \quad \text{for all } \; (t,x)\in [\bar S- \bar \delta, \bar T +\bar \delta] \times \R^n;
  \]
  and 
  $D^2_x h_j(t,x)$ exists and is continuous with respect to $x$ on the `tube' $\{(t,x) \in [\bar S- \bar \delta, \bar T +\bar \delta]\times \R^n \,:\, |x-\bar x_e(t)|\leq \bar \delta\}$;
  \item[(iii):] the set $C(t)$ satisfies the following strict positive linear independence constraint qualification: there exists 
  $\gamma_0>0$ such that 
  \[
  \Big|\sum_{j\in I^+_{\eta_0} (t,x)}\alpha_j\nabla_x h_j(t,x)\Big|\ge \gamma_0,  \quad \text{whenever} \sum_{j\in I^+_{\eta_0}(t,x)} \alpha_j=1, \;\; \alpha_j\ge 0
  \]
  for all $x\in \R^n \setminus C(t)$, $t \in [\bar S- \bar \delta, \bar T +\bar \delta]$.
\end{itemize}}

\item[(H5):] For all $R>0$ there exists \(\rho\in(0,1)\)  
such that for each  {\(k\in I(t,x)\)}, we have
\[
\sum_{\substack{ {j\in I(t,x)}\\ j\neq k}}
\bigl|\nabla_x h_k(t,x)\cdot\nabla_x h_j(t,x)\bigr|
\le \rho\,\bigl|\nabla_x h_k(t,x)\bigr|^2,
\]
for all $(t,x) \in [\bar S- \bar \delta, \bar T +\bar \delta] \times R\B$;
\end{itemize}

\noindent
Let ${\cal H}$ denote  the un-maximized
Hamiltonian function for ($P$):
$$
{\cal H}(t,x,p,u):= p\cdot f(t,x,u).
$$
Then, the maximized Hamiltonian $H$ has the representation $H(t,x,p)= \underset{u \in U(t)}{\sup} {\cal H}(t,x,p,u)$.
\begin{thm}\label{pm}
\textbf{(Free End-Time Maximum Principle - Measurable Time Dependence)}
Let $([\bar S,\bar T],\bar x,\bar u)$ be a local minimizer for $(P)$ such that $\bar T - \bar S>0$.
Assume that {\rm (H1)}--{\rm (H5)} hold for some $\bar \delta>0$.
Then, there exist a function $p\in NBV([\bar S,\bar T];\R^n)$, a scalar $\lambda\ge 0$, finite signed Radon measures $\dd\mu_j$ on $[\bar S,\bar T]$, and bounded measurable functions $\eta_j:[\bar S,\bar T]\to [0,+\infty)$ for $j=1,\dots,J$, such that 
\begin{itemize}
    \item[(i):] $(p,\lambda)\neq 0$;
    
    \item[(ii):] $\dot{\bar x}(t)= f(t,\bar x(t),\bar u(t))- \sum_{j=1}^J \eta_j(t)\,\nabla_x h_j(t,\bar x(t)), \qquad \eta_j(t)\neq0 \Rightarrow h_j(t,\bar x(t))=0, \text{ a.e. } t\in[\bar S,\bar T] $;
    
    \item[(iii):] $-\dd p \in \operatorname{co}\,\partial_x\mathcal H(t,\bar x(t),p(t),\bar u(t))\,\dd t -\sum_{j=1}^{J}\nabla_x h_j(t,\bar x(t))\,\dd\mu_j(t) - p(t)\sum_{j=1}^{J}\eta_j(t)\,D^2_x h_j(t,\bar x(t))\,\dd t$;
    
    \item[(iv):] there exist $\nu_0 \in N_{C(\bar S)}(\bar x (\bar S))$, $\nu_1 \in N_{C(\bar T)}(\bar x (\bar T))$,
    \[
    \begin{aligned}
    \xi_0 &\in 
		\underset{t\to \bar S}{sub\text{-}ess}\,
		H(t,\bar x(\bar S),p(\bar S))
    \ - \ p(\bar S)\cdot \nu_0, \\
    \xi_1 &\in 
		\underset{t\to \bar T}{super\text{-}ess}\,
		H(t,\bar x(\bar T),p(\bar T))
      \ - \ p(\bar T)\cdot \nu_1,
    \end{aligned}
    \]
    such that
    $
    (-\xi_0,p(\bar S),\xi_1,-p(\bar T))
    \in
    \lambda\,\partial g(\bar S,\bar x(\bar S),\bar T,\bar x(\bar T))
    + N_E(\bar S,\bar x(\bar S),\bar T,\bar x(\bar T))$
    
    \hspace{3.2 in} $ + \sum_{j\in I(\bar S,\bar x(\bar S))} \beta_j\,\nabla h_j(\bar S,\bar x(\bar S))\times\{0\}\times\{0\}$
    
		for some $\beta_j\ge 0$,  $j\in I(\bar S,\bar x(\bar S))$;
    \item[(v):] $p(t)\cdot f(t,\bar x(t),\bar u(t))=\max_{u\in U(t)} p(t)\cdot f(t,\bar x(t),u)$, \quad a.e.\ $t\in[\bar S,\bar T]$;

    \item[(vi):] $\eta_j(t)\,p(t)\cdot\nabla_x h_j(t,\bar x(t))=0$,
    \quad for each $j\in I(t,\bar x(t))$, a.e.\ $t\in[\bar S,\bar T]$;
    
    \item[(vii):] $\operatorname{supp}(\dd\mu_j)\cup \operatorname{supp}(\eta_j) \subset \{\,t\in[\bar S,\bar T]: h_j(t,\bar x(t))=0\,\}$, \quad for each $j=1,\dots,J$;
    
    \item[(viii):] $p(t)\cdot\nabla_x h_j(t,\bar x(t))\,\dd\mu_j(t)$
    \text{is a nonnegative measure}, \quad for each $j=1,\dots,J$.
\end{itemize}
\end{thm}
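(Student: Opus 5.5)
The plan is to reduce (P) to a family of fixed end-time problems with standard (Lipschitz) dynamics by penalizing the normal cone term. Then we pass to the limit in the resulting maximum principles, in the spirit of Clarke's perturbation method. The first step is to approximate the sweeping process by the penalized dynamics
\[
\dot x(t) = f(t,x(t),u(t)) - \sum_{j=1}^J \gamma_k\, h_j^+(t,x(t))\,\nabla_x h_j(t,x(t)),
\]
with $\gamma_k\uparrow\infty$; working directly with the $h_j$'s is what (H4)--(H5) are designed for. Using (H4)(iii) (strict positive linear independence outside $C(t)$) and the Gramian diagonal dominance (H5), I would show two estimates. First, trajectories of the penalized system starting in $C(S)$ satisfy $\sup_t \max_j h_j^+(t,x_k(t)) = O(1/\gamma_k)$. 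Second, the multipliers $\eta_j^k(t):=\gamma_k h_j^+(t,x_k(t))$ are uniformly bounded in $L^\infty$. From these, the penalized trajectories converge uniformly, up to subsequences, to solutions of (SP) with weak$^*$ limits $\eta_j$; this already yields (ii) along $\bar x$ with the complementarity condition. Conversely, each process of (SP) is approximated by penalized processes with the same control, with a rate controlled by $\gamma_k^{-1}$.

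Next, the free end-time problem is converted to a fixed-interval one by the usual time reparametrization. We introduce $t=\tau(s)$ with $\dot\tau = v(s)$, $v\in[1-\epsilon,1+\epsilon]$, on a fixed interval, and treat $\tau$ as an additional state; this is what produces the sub/super-essential values. To get the sharp transversality conditions, I would follow \cite{Bettiol-Vinter-SICON2025}. We parametrize the left and right endpoint perturbations so that the cost includes the integral $\int H$ near $\bar S,\bar T$. Ekeland's principle is then applied to the penalized problem with cost
\[
g(\cdot) + \text{(penalty of } d_E \text{ via }\max\{g-g(\bar\cdot)+\epsilon_k^2, d_E\}) ,
\]
in the metric $d_{\mathcal E}(u,u')+|S-S'|+|T-T'|+|x(S)-x'(S)|$. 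The right endpoint is a function of the initial data and control through the penalized flow, while the initial state is free (not forced into $C(S)$). Freeing the initial state is why the extra term $\sum\beta_j\nabla h_j(\bar S,\bar x(\bar S))$ appears in (iv): we impose $h_j(S,x(S))\le0$ as an endpoint inequality constraint and obtain multipliers $\beta_j\ge0$. This keeps the approach valid for arbitrary closed $E$, which is the non-trivial use of Ekeland mentioned in the introduction.

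For each $k$ the Ekeland minimizer solves a standard free end-time problem with Lipschitz (in $x$) dynamics, on the tube where $D_x^2h_j$ exists by (H4)(ii). The known measurable-time maximum principle \cite[Chapter 9]{Bettiol-Vinter-book2024} then gives costates $p_k$ satisfying
\[
-\dot p_k \in \operatorname{co}\partial_x\mathcal H - \gamma_k\sum_j\bigl[h_j^+\,p_k D_x^2h_j + \mathbf 1_{\{h_j>0\}}(p_k\cdot\nabla_x h_j)\nabla_x h_j\bigr],
\]
together with the maximum condition and sub/super-ess transversality. Setting $d\mu_j^k := \gamma_k \mathbf 1_{\{h_j>0\}}(p_k\cdot\nabla_xh_j)\,dt$, the crucial step, and the main obstacle, is a uniform bound on $\|p_k\|_\infty$ and on the total variation of $\mu_j^k$. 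I would try to prove it by taking the inner product of the adjoint equation with $\nabla_x h_j$ (or using $\frac{d}{dt}|p_k\cdot\nabla h_j|$) and exploiting diagonal dominance (H5) to decouple the indices. This should yield $\int\gamma_k|p_k\cdot\nabla h_j|\mathbf 1_{\{h_j>0\}}\le K(1+|p_k(\bar T)|)$, followed by Gronwall.

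The final step is passage to the limit. Normalize so that $|p_k(\bar T)|+\lambda_k=1$ and extract subsequences: $p_k\to p$ in NBV pointwise, $\mu_j^k\rightharpoonup^* \mu_j$, and $\eta_j^k\rightharpoonup^*\eta_j$. Convexity of the co$\,\partial_x\mathcal H$ term gives (iii) by standard compactness of trajectories arguments. The maximum condition (v) passes to the limit via $d_{\mathcal E}$ convergence of controls. Nonnegativity of $p_k\cdot\nabla h_j\,d\mu_j^k$ is immediate, since it equals $\gamma_k\mathbf1(p_k\cdot\nabla h_j)^2$, and gives (viii) in the limit. Condition (vi) comes from $\eta^k_j (p_k\cdot\nabla h_j)=\gamma_k h_j^+(p_k\cdot\nabla h_j)\to0$ in $L^1$, which follows from the bounds above and $h_j^+=O(\gamma_k^{-1})$. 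The support conditions (vii) follow from the uniform convergence of $h_j(t,x_k(t))$. For (iv), the sub/super-essential values of the perturbed Hamiltonians are limits of subdifferentials of integral functions, which are closed under these limits. The endpoint values of the penalization terms $\gamma_k h_j^+ \,p_k\cdot\nabla h_j$ at $\bar S,\bar T$ converge to $p(\bar S)\cdot\nu_0$ and $p(\bar T)\cdot\nu_1$ with $\nu_i$ in the limiting normal cones, using the representation $N_{C(t)}(x)=\{\sum\alpha_j\nabla_xh_j:\alpha_j\ge0, j\in I(t,x)\}$ valid under (H4)(iii). Nontriviality (i) follows from the normalization together with the boundedness of the $\beta_j$, which is again secured by (H4)(iii).
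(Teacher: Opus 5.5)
Your overall architecture matches the paper's. You penalize directly with the $h_j$, impose $h_j(S,x(S))\le 0$ as an endpoint constraint (whence the $\beta_j$), and apply the measurable-time free end-time maximum principle of Bettiol--Vinter to the penalized problems. You then bound $p_k$ and the variation of $\mu_j^k$ through $\chi_j=p_k\cdot\nabla_x h_j$ and diagonal dominance, and pass to the limit. Two side remarks:
\begin{itemize}
\item Your quadratic penalty $\gamma_k h_j^+\nabla_x h_j$, in place of the paper's $\alpha^{-1}\nabla_x h$ with $h=\frac13\sum_j (h_j^+)^3$, is workable. However, the dynamics is then only Lipschitz across $\{h_j=0\}$, so $\mathbf{1}_{\{h_j>0\}}$ in your adjoint inclusion must be replaced by a measurable $\theta_j\in[0,1]$; this is harmless for (viii).
\item Your time-reparametrization paragraph should be discarded. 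With $f$ merely measurable in $t$, the map $\tau\mapsto f(\tau,y,v)$ is not Lipschitz in the new state $\tau$, so no maximum principle applies to the reparametrized problem. That device is the paper's tool for the Lipschitz case, where it produces $a(\cdot)$. The sub-/super-essential values come from applying the Bettiol--Vinter theorem directly to the penalized free end-time problem, which is in fact what you do in the next paragraph.
\end{itemize}

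The genuine gap is the nontriviality condition (i). It comes from applying Ekeland's principle only once, on the penalized problem. Minimality of $\bar x$ says nothing about penalized processes: such a process can satisfy the endpoint constraint exactly with $g\le g(\bar x)-\varepsilon_k^2$. Comparing it with the sweeping process having the same control and initial state only gives $\Psi(x)\ge\Psi(y)-o(1)$, where $\Psi(y)>0$ but is not bounded away from $0$. Hence your Ekeland point may satisfy $\Psi=d_E=0$. There $0\in\partial d_E$, and the penalized maximum principle is satisfied, up to the $O(\varepsilon_k)$ Ekeland terms, by $p_k\equiv 0$ with all the cost weight on the $d_E$ branch. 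A normalization such as $|p_k(\bar T)|+\lambda_k=1$ involves the multiplier of $\Psi$, not the weight on $\partial g$. So it yields no $(p,\lambda)\neq 0$ in the limit, and bounding the $\beta_j$ does not help.

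The missing idea is the paper's two-step use of Ekeland; this, rather than freeing the initial state, is the non-trivial use mentioned in the introduction.
\begin{itemize}
\item First apply Ekeland on the complete space $X$ of genuine processes of (SP), with the metric $d_X$. This gives $y_i\in X$ minimizing $\Phi_i=\Psi_i+\varepsilon_i d_X(\cdot,y_i)$ over $X$. Moreover $\min_X\Phi_i=\Psi_i(y_i)>0$, precisely because $y_i$ is a genuine process.
\item Next use the approximation and stability estimates of Lemmas~\ref{cvg}--\ref{stg}, which are uniform in the control. Choose $\alpha_i$ and $\rho_i$ small relative to this positive minimum, so that $\Phi_i\ge\Phi_i(y_i)-\rho_i^2/2>0$ on the penalized space $W_i$, as in \eqref{rha}. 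The penalized counterpart of $y_i$ is then a $\rho_i^2$-minimizer.
\item Only now apply Ekeland on $W_i$.
\end{itemize}
The resulting $x_i$ has strictly positive max-cost. Therefore, whenever the $d_E$ branch carries weight, its subgradient is a unit vector, and this is what the limiting argument for (i) relies on.
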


\ \\

\begin{rem}\label{nc}
\begin{itemize}
    \item[(a)]  The strict positive linear independence condition {\rm (H4)} (iii) is equivalent to the positive linear independence condition when the moving set $C(t)$ is uniformly bounded; this is a constraint qualification widely employed in deriving necessary conditions for sweeping process problems (cf. \cite{BCG2025,CMNN2024,ChamZeid} for detailed discussion and further references). Here, we recall only some (simple) consequences that follow from {\rm (H4)} (iii): for all $t\in[\bar S - \bar \delta,\bar T + \bar \delta]$, $C(t)$ has nonempty interior (i.e. ${\rm  int }\,  C(t)\neq \emptyset$ for all $t$), 
		and 
    \[
    N_{C(t)}(x) = \left\{\xi \in \R^n : \xi = \sum_{j \in I(t,x)} \alpha_j \nabla_x h_j(t,x), \quad \alpha_j\ge 0 \right\}
    \]
    for all $x \in \partial C(t) \cap (\bar x_e(t) + \bar \delta \B)$.
    Observe that the boundary of $C(t)$ satisfies 
    \[
    \partial C(t) = \{x \in C(t) : I(t,x) \neq \emptyset\}.
    \]
		We also observe that for the inequality in {\rm (H4)}(iii), it suffices to consider points $x$ not belonging to $C(t)$. This is because the inequality only plays a role in estimates involving points of trajectories that lie outside the moving set $C(t)$ (see the analysis of the proofs in Section 4).
    \item[(b)] Notice that, when the reference minimizer $([\bar S,\bar T],\bar x,\bar u)$  is such that $\bar x(t) \in {\rm  int }\,  C(t)$ for all $t\in [\bar S,\bar T]$ (for instance, when $C(t)=\R^n$ for all $t$), the necessary optimality conditions of Theorem~\ref{pm} are consistent with those obtained in \cite{Bettiol-Vinter-book2024,Bettiol-Vinter-SICON2025} for free end-time problems. In particular, since in Theorem~\ref{pm} the endpoint constraint $E$ is allowed to be a general closed set, the transversality condition coincides exactly with what one would expect from \cite{Bettiol-Vinter-book2024,Bettiol-Vinter-SICON2025}.
    \item[(c)]  {The transversality condition (iv) of Theorem \ref{pm} differs from that one established for optimal control problems without the sweeping process term `$N_{C(t)}$' in the dynamics provided by \cite{Bettiol-Vinter-book2024,Bettiol-Vinter-SICON2025}. This is due to the presence of the additional terms `$p(\bar S)\cdot \nu_0$' and `$p(\bar T)\cdot \nu_1$'. }These can not be substituted by terms involving the sub- or super-essential values of the Hamiltonian. Indeed, we can not even use a `less refined' set including the essential value and replace the terms $p(\bar S)\cdot\nu_{0}$ and 
    $p(\bar T)\cdot\nu_{1}$ in (iv) of Theorem \ref{pm} by essential value sets such as 
    \[
        \operatorname*{ess\,}_{t \to \bar S} 
        \Big[\, p(\bar S)\cdot \sum_{j=1}^{J}\eta_j(t)\,D^2_x h_j(\bar S,\bar x(\bar S)) \Big]
        \quad\text{and}\quad
        \operatorname*{ess\,}_{t \to \bar T} 
        \Big[\, p(\bar T)\cdot \sum_{j=1}^{J}\eta_j(t)\,D^2_x h_j(\bar T,\bar x(\bar T)) \Big].
   \]
   The essential values of an integrable function $\phi (t)$ at $t=\bar t$ being defined as the set 
   \[
   \underset{t \rightarrow \bar t}{\mbox{ess}}\, \phi (t) \,:= \, \Big[ \lim_{\epsilon \downarrow 0}\,
    \underset{\bar t- \epsilon \leq  {t \leq \bar t+\epsilon}}{\mbox{ess inf}}\phi (t)\,,\,
    \lim_{\varepsilon \downarrow 0}\,
    \underset{\bar t- \epsilon \leq { t \leq \bar t+\epsilon}}{\mbox{ess\,sup}}  \phi (t)\Big]\,.
   \]
   (Recall that the essential value of $\phi$ contains the sub- and super-essential values, cf. \cite{Bettiol-Vinter-book2024,Bettiol-Vinter-SICON2025}). This is illustrated in Example 2 below.  It is immediate to see that the additional term `$p(\bar S)\cdot \nu_0$' (respectively, `$p(\bar T)\cdot \nu_1$') vanishes when $\bar x(\bar S) \in {\rm  int }\,  C(\bar S)$ (respectively, $\bar x(\bar T) \in {\rm  int }\,  C(\bar T)$). Observe also that assumptions (H2) and (H3) guarantee that the sub- and super-essential values of the Hamiltonian $H$ always exist (see \cite[Chapter 9]{Bettiol-Vinter-book2024}).
	\item[(d)]  {The term `$\sum_{j\in I(S,\bar x(S))} \beta_j\,\nabla h_j(S,\bar x(S))$' of the transversality condition (iv) of Theorem \ref{pm} can be omitted when the endpoint constraint \(E\) is a closed subset of ${\rm Gr}\, C \times\R\times\R^n $. This is due to the fact that the initial constraint `$x(S)\in C(S)$' can be removed in the proof analysis. In this case we can deduce the following transversality condition:}
	\begin{itemize}
	\item[(iv)*:] there exist $\nu_0 \in N_{C(\bar S)}(\bar x (\bar S))$, $\nu_1 \in N_{C(\bar T)}(\bar x (\bar T))$,
    \[
    \begin{aligned}
    \xi_0 &\in 
    \operatorname{sub\text{-}ess}_{t\to\bar S} H(t,\bar x(\bar S),p(\bar S)) 
    \ - \ p(\bar S)\cdot \nu_0, \\
    \xi_1 &\in 
    \operatorname{super\text{-}ess}_{t\to\bar T} H(t,\bar x(\bar T),p(\bar T)) 
     \ - \ p(\bar T)\cdot \nu_1,
    \end{aligned}
    \]
    such that
    \[
    (-\xi_0,p(\bar S),\xi_1,-p(\bar T))
    \in
    \lambda\,\partial g(\bar S,\bar x(\bar S),\bar T,\bar x(\bar T))
    + N_E(\bar S,\bar x(\bar S),\bar T,\bar x(\bar T)).
		\]
	\end{itemize}
	\item[(e)]  {Consider the particular case when we have a fixed initial time $S_0$, we take $E=\{S_0\} \times E_1$ where $E_1 \subset \R^n \times \R \times \R^n$ is a closed set and $g=g_1(x_0,T,x_1)$. 
	We notice that the transversality condition (iv) of Theorem \ref{pm} can be simplified to 
	\begin{itemize}
	\item[(iv)**:] there exist $\nu_1 \in N_{C(\bar T)}(\bar x (\bar T))$ and
    \[
    \xi_1 \in 
    \operatorname{super\text{-}ess}_{t\to\bar T} H(t,\bar x(\bar T),p(\bar T)) \ -  \ p(\bar T)\cdot \nu_1,
    \]
    such that
    \[
    (p(S_0),\xi_1,-p(\bar T))
    \in
    \lambda\,\partial g_1(\bar x(S_0),\bar T,\bar x(\bar T))
    + N_{E_1}(\bar x( S_0),\bar T,\bar x(\bar T)).
		\]
	\end{itemize}
	Observe that the extra term `$\sum_{j\in I(S_0,\bar x(S_0))} \beta_j\,\nabla_x h_j(S_0,\bar x(S_0))$' can be now incorporated in the value of the costate arc of bounded variation  $p$.}
\end{itemize}
\end{rem}

\ \\

\begin{prop}[Normality - Measurable Time Dependence]\label{libre}
 {Let $([\bar S,\bar T],\bar x,\bar u)$ be a local minimizer for $(P)$ such that $\bar T - \bar S>0$.
Assume that {\rm (H1)}--{\rm (H5)} are satisfied and that $E=E_0\times\R^n$ (which implies that the state right endpoint is free), where $E_0$ is a closed subset of ${\rm Gr}\, C \times\R$.
Then, all the assertions of Theorem \ref{pm} are valid with 
\(
\lambda = 1.
\)}
\end{prop}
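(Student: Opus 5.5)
We argue by contradiction. Apply Theorem~\ref{pm} to the local minimizer $([\bar S,\bar T],\bar x,\bar u)$ and suppose that $\lambda=0$. Once we show that this forces $p\equiv 0$, assertion (i) is violated, so $\lambda>0$. Since every condition in Theorem~\ref{pm} is positively homogeneous in $(p,\lambda,\dd\mu_j,\beta_j,\xi_0,\xi_1)$, we may then rescale so that $\lambda=1$; the multipliers $\eta_j$ come from the dynamics (ii) and are not rescaled. The argument is the standard one for free right endpoints, but the sweeping term must be handled in the adjoint inclusion.

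\textbf{Step 1: the right endpoint.} Because $E=E_0\times\R^n$, we have
\[
N_E(\bar S,\bar x(\bar S),\bar T,\bar x(\bar T))=N_{E_0}(\bar S,\bar x(\bar S),\bar T)\times\{0\}.
\]
Also, $E_0\subset {\rm Gr}\,C\times\R$ implies $E\subset {\rm Gr}\,C\times\R\times\R^n$. By Remark~\ref{nc}(d), the transversality condition (iv)* therefore holds with no $\beta_j$-term. Reading off the last component of (iv)* with $\lambda=0$ gives $-p(\bar T)=0$, so $p(\bar T)=0$.

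\textbf{Step 2: backward propagation.} We show that $p(\bar T)=0$, together with (iii), (vi), (vii) and (viii), forces $p\equiv 0$ on $[\bar S,\bar T]$. The absolutely continuous part of $\dd p$ is a linear-in-$p$ term bounded by $(k_f(t)+K|\eta(t)|)|p(t)|$, where $K$ bounds $D^2_x h_j$ on the tube and $\eta_j$ is bounded. This follows from (H3)(ii) and the fact that $\operatorname{co}\partial_x\mathcal H(t,\bar x,p,\bar u)\subset k_f(t)|p|\B$. The singular term is $\sum_j\nabla_x h_j\,\dd\mu_j$.

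To control the measure part, consider
\[
\phi(t)=|p(t)|^2
\]
and its distributional derivative. By (viii) and (vii), the contribution of the measures to $\dd\phi$ is $2\sum_j p\cdot\nabla_x h_j\,\dd\mu_j$, which has a definite sign: it is a nonnegative measure contributing to $-\dd\phi$. Hence, going backward in time from $\bar T$, the measures can only decrease $|p|^2$. The remaining part satisfies
\[
-\dd\phi\le 2(k_f+K|\eta|)\,\phi\,\dd t
\]
in the reverse sense. A Gronwall-type inequality for BV functions, run backward from $\phi(\bar T)=0$, then yields $\phi\equiv 0$. Two points need care here: the product rule for $|p|^2$ at atoms of $\dd\mu_j$, and the use of $p(t^-)$ versus $p(t)$ in (viii).

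\textbf{Main obstacle.} The delicate point is precisely this jump analysis. At an atom $\tau$, the relation $p(\tau^-)-p(\tau)=\sum_j\nabla_x h_j\,\mu_j(\{\tau\})$ must be combined with the sign condition (viii) so as to conclude $|p(\tau^-)|\le|p(\tau)|$. This is where the diagonal dominance (H5) of the Gramian should enter: it guarantees that the jump vector and $p$ interact with the right sign, possibly after using (vi) to kill the components of $p$ along the active gradients. If a direct estimate fails, the fallback is to revisit the penalized approximating problems from the proof of Theorem~\ref{pm}. There, $p_i$ are absolutely continuous, $p_i(\bar T)=0$ exactly when $\lambda_i=0$, and a classical Gronwall argument applied before passing to the limit gives $\lambda_i$ bounded away from zero. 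This fallback would also yield $\lambda=1$ after normalization.
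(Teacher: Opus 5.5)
\paragraph{Step 2 is false, so the main argument cannot be completed.}
The implication ``$\lambda=0$ and $p(\bar T)=0$ $\Rightarrow$ $p\equiv 0$'' does not follow from the conditions (i)--(viii) of Theorem~\ref{pm}; the obstacle is not technical. The measures $\dd\mu_j$ may have atoms, in particular at $\bar T$, where $p(\bar T)$ need not equal $p(\bar T^-)$. At an atom $\tau$ one has $\Delta p=p(\tau)-p(\tau^-)=\sum_j\nabla_x h_j(\tau,\bar x(\tau))\,\mu_j(\{\tau\})$ (your sign is reversed). Hence $|p(\tau)|^2-|p(\tau^-)|^2=2p(\tau)\cdot\Delta p-|\Delta p|^2$, and (viii) controls only the first term. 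If $p(\tau)=0$, (viii) says nothing about the jump, and neither (H5) nor (vi) can help.

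Example~2 gives a concrete counterexample. Proposition~\ref{libre} applies there with $E_0=\{0\}\times\{0\}\times\R\subset{\rm Gr}\,C\times\R$, and (H5) is vacuous since $J=1$. Take $\lambda=0$, $\eta\equiv0$, $\dd\mu=-\delta_{\{1\}}$, $p=1$ on $[0,1)$, $p(1)=0$, $\nu_0=0$, $\xi_0=1$, $\xi_1=0$. Then:
\begin{itemize}
\item $\dd p=\dd\mu$, which gives (iii);
\item $(-\xi_0,p(0),\xi_1,-p(1))=(-1,1,0,0)\in N_E(0,0,1,1)=\R\times\R\times\{0\}\times\{0\}$, which gives (iv) and (iv)$^*$;
\item $p\,\bar u=1=\max_{u\in[0,1]}pu$ on $[0,1)$, which gives (v);
\item $p(1)\,\dd\mu=0$, which gives (viii);
\item (i), (ii), (vi) and (vii) are immediate.
\end{itemize}
So nontrivial abnormal multipliers exist. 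No argument that uses only the conclusions of Theorem~\ref{pm} can give normality: the proposition asserts only that \emph{some} multiplier set is normal. The information that rules out abnormality is the a priori bound $\|p_i\|_{L^\infty}\le C|p_i(T_i)|$ along the approximations. It is lost in the limit because the penalty measures can concentrate at $\bar T$.

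\paragraph{Your fallback is the right route, but it still has to be carried out.}
It is what the paper does: apply the maximum principle to $(P_i)$, where $p_i$ is absolutely continuous and your sign argument is legitimate, and show that $\lambda_i$ stays away from $0$. The steps needed are these.
\begin{itemize}
\item[(a)] $d_E=d_{E_0}$ does not depend on $x(T)$, and the $2\varepsilon_i$-perturbation affects only the $(S,x(S))$ components. So the last component of the transversality condition for $(P_i)$ reads exactly $-p_i(T_i)\in\lambda_i\,\partial_{x_1}g$, whence $|p_i(T_i)|\le L_g\lambda_i$.
\item[(b)] The backward Gronwall estimate from the proof of Theorem~\ref{pm} rests on $p_i\cdot D^2_xh\,p_i\ge -c\,\alpha_i|p_i|^2$. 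It gives $\|p_i\|_{L^\infty}\le C|p_i(T_i)|$.
\item[(c)] Using $|H(t,x,p)|\le c_f|p|$ and the uniform bound on $\alpha_i^{-1}\nabla_x h(t,x_i(t))$ coming from \eqref{est1}, the quantities $p_i(S_i)$, $\xi^i_0$, $\xi^i_1$ are all $O(\lambda_i+\varepsilon_i)$.
\item[(d)] The max-term in the cost must be positive at the minimizer of $(P_i)$; this is what the Ekeland construction is arranged to ensure. Then the $d_E$-branch can carry weight only when the endpoint lies outside $E$. There every element of $\partial d_{E_0}$ has norm one, so $1-\lambda_i\le|(-\xi_0^i,p_i(S_i),\xi_1^i)|+L_g\lambda_i+4\varepsilon_i$. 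This contradicts (c) if $\lambda_i\to0$.
\end{itemize}
Step (d) also requires the multiplier $\lambda_i^{(3)}\sum_j\beta^i_j\nabla h_j$ of the initial constraint $h_j(S,x(S))\le 0$ to be absent, since otherwise it could cancel the unit vector. This, rather than your Step~1, is where $E_0\subset{\rm Gr}\,C\times\R$ is used (cf.\ Remark~\ref{nc}(d)). Once $\lambda_i$ is bounded away from zero, $\lambda=\lim\lambda_i>0$ and you normalize.
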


\ \\

\noindent
We now present a result establishing the validity of the necessary conditions obtained earlier when condition (H4)(iii) is replaced by a (not necessarily strict) positive linear independence constraint qualification that is in force together with a local prox-regularity property defined on a tube around a reference arc.

\begin{thm}\textbf{(Free End-Time Maximum Principle - Measurable Time Dependence - Local Hypotheses)}\label{pmo}
Let $([\bar S,\bar T], \bar x, \bar u)$ be a local minimizer for $(P)$ such that $\bar T - \bar S > 0$. 
The assertions of Theorem \ref{pm} and Proposition \ref{libre} remain valid when hypothesis {\rm (H4)} is replaced by 
\begin{itemize}
    \item [$(\widetilde{H4})$] conditions (i) and (ii) of (H4) are satisfied; moreover
    \begin{itemize}
      \item[(iii):] the set $C(t)$ satisfies the following positive linear independence constraint qualification: 
		\[
  \text{if} \;\; \sum_{j\in I(t,x)}\alpha_j\nabla_x h_j(t,x)=0 \; \text{with } \; \alpha_j \ge 0, \quad \text{then }  \alpha_j=0 \; \text{for all } j \in I(t,x), \, 
  \]
  for all $x \in \partial C (t)$, $t \in [\bar S- \bar \delta, \bar T +\bar \delta]$;	
  \item[(iv):] there exist constants $r>0$, $\delta_0\in (0,\bar \delta/2]$ and $\delta_1\in (0, \delta_0/4]$, and a Lipschitz continuous arc $\tilde y \in W^{1,\infty}([\bar S- \bar \delta, \bar T +\bar \delta],\R^n)$ such that $\| \bar x_e - \tilde y\|_{L^{\infty}} < \delta_1$ and $C(t)\cap \left(\tilde y (t) + \delta_0 \B\right)$ is uniformly $r$-prox-regular for all $t \in [\bar S- \bar \delta, \bar T +\bar \delta]$.
\end{itemize}
\end{itemize}
\end{thm}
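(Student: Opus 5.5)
We reduce Theorem \ref{pmo} to Theorem \ref{pm} and Proposition \ref{libre}. The idea is to replace the moving set $C(t)$ by a modified set $\tilde C(t)$ that satisfies the global hypothesis (H4), coincides with $C(t)$ near the reference arc, and leaves $([\bar S,\bar T],\bar x,\bar u)$ a local minimizer.

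\emph{Step 1: localization.} Pick a smooth cut-off. Concretely, add to the family $\{h_j\}$ the ball constraint
\[
h_{J+1}(t,x):=\tfrac12\big(|x-\tilde y(t)|^2-\delta_0'^2\big),
\]
with $\delta_0'\in(2\delta_1,\delta_0)$ suitably chosen, and set $\tilde C(t):=C(t)\cap(\tilde y(t)+\delta_0'\B)$. Since $\|\bar x_e-\tilde y\|_{L^\infty}<\delta_1$, the added constraint is inactive along $\bar x$ and along any admissible process that is $\delta$-close to $\bar x$ for $\delta<\delta_0'-\delta_1$. Because $\tilde y$ is only Lipschitz, $h_{J+1}$ is not $C^{1+}$ in $t$. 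I would therefore first mollify $\tilde y$ into a $C^{1+}$ arc $y_\epsilon$ with $\|y_\epsilon-\tilde y\|_\infty$ small. The inclusion $\tilde C(t)\subset \tilde y(t)+\delta_0\B$ is preserved, so prox-regularity is inherited.

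\emph{Step 2: comparison of problems.} On the tube, solutions of the sweeping process with $C$ and with $\tilde C$ coincide while they stay in the interior of the ball, since $N_{\tilde C(t)}(x)=N_{C(t)}(x)$ there. Hence any admissible process of the modified problem that is close to $\bar x$ is admissible for (P). Therefore $([\bar S,\bar T],\bar x,\bar u)$ remains a local minimizer, possibly after shrinking $\delta$. Note also that the multipliers produced for $h_{J+1}$ vanish by (vii), since this constraint is never active along $\bar x$. So the conclusions transfer verbatim, including those of Proposition \ref{libre}: $E_0\subset{\rm Gr}\,C\times\R$ intersected with the tube lies in ${\rm Gr}\,\tilde C\times\R$ locally, which suffices because $N_E$ is local.

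\emph{Step 3 (main obstacle): verifying (H4)(iii) and (H5) for $\tilde C$.}
\begin{itemize}
\item[(a)] Since $\tilde C(t)$ is uniformly bounded, compactness together with the $C^{1}$ regularity of the $h_j$ upgrades pointwise positive linear independence (PLI) at boundary points to a uniform bound $\gamma_0$ for points outside $\tilde C(t)$ with $0\le h_j\le\eta_0$, after shrinking $\eta_0$. This is the argument alluded to in Remark \ref{nc}(a).
\item[(b)] PLI at points where the ball constraint is active together with some $h_j$ needs a genericity choice of $\delta_0'$. Using the $r$-prox-regularity, $\partial C(t)$ is a uniformly controlled set near $\tilde y$. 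I would choose the radius so that the sphere meets $\partial C(t)$ transversally, i.e. $x-\tilde y(t)\notin -N_{C(t)}(x)$. The obstruction here is a point where the inward normal to the sphere is a normal of $C$; prox-regularity with $\delta_0<r$ should exclude this for $\delta_0'$ small relative to $r$.
\item[(c)] The diagonal dominance (H5) is the genuinely delicate part. I expect to obtain it not for the $h_j$ directly but via prox-regularity. Uniform prox-regularity of $\tilde C(t)$ is exactly the property that the proofs in Section 4 use (H5) to derive. So I would check where (H5) enters the proof of Theorem \ref{pm}, namely in the existence and uniqueness of solutions, the penalization estimates, and the boundedness of $\eta_j$. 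In each place I would replace that use by the hypomonotonicity of $N_{\tilde C(t)}$ granted by $r$-prox-regularity, combined with the PLI bound to control the individual multipliers $\alpha_j$.
\end{itemize}
This rerun of the key estimates, rather than a pure reduction, is where I expect most of the work.
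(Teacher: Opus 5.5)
Your localization by the ball constraint $h_{J+1}$ is the same device the paper uses. However, the black-box reduction to Theorem~\ref{pm} fails at Step~3(c), and the repair you propose targets the wrong part of the proof.

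\textbf{Why (H5) cannot be transferred.} (H5) is not among the hypotheses being replaced. What a black-box reduction needs is (H5) for the \emph{augmented} family, and this is false in general. Already in Example~2, take $\tilde y=\bar x$ and any radius $\delta_0'<1$. At $t=1-\delta_0'$ the point $x=1$ has $h_1=h_{J+1}=0$, with $\nabla_x h_1=1$ and $\nabla_x h_{J+1}=\delta_0'$. The two diagonal-dominance inequalities then give $1\le\rho\,\delta_0'\le\rho^2$, which is impossible for $\rho<1$. More generally, this internal tangency with positively parallel gradients occurs whenever $\tilde y(t)$ passes through depth $\delta_0'$ in $C(t)$, so no choice of radius avoids it.

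\textbf{Why hypomonotonicity cannot replace (H5).} In the paper, (H5) is not used for existence, uniqueness or the penalization estimates of Lemmas~\ref{cvg}--\ref{stg}. Those rest on Lemma~\ref{ingt} and on prox-regularity, and prox-regularity is derived from (H4), not (H5). (H5) enters only in the costate estimate \eqref{brn}. That estimate gives the $L^1$ bound on $\alpha_i^{-1}\sum_j h_j^+\,|p_i\cdot\nabla_x h_j|$, and hence the measures $\dd\mu_j$ and the $BV$ bound on $p$. This is a dual estimate, and a primal property of $N_{\tilde C(t)}$ does not deliver it.

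\textbf{The missing idea.} You never need (H5) for $h_{J+1}$; (H5) is kept for $h_1,\dots,h_J$. The paper does not apply Theorem~\ref{pm} to modified data. It reruns the construction with $\tilde h$ and $C(t)\cap(\tilde y(t)+\delta_0\B)$, using Lemma~\ref{lhy} in place of Lemma~\ref{ingt} in Lemmas~\ref{cvg}--\ref{stg}. It then observes that $x_i\to\bar x$ uniformly and $\|\bar x_e-\tilde y\|_{L^\infty}<\delta_1\le\delta_0/4$, so the ball constraint is strictly inactive along $x_i$ for large $i$. Hence $\tilde h=h$ near the graph of $x_i$, and the maximum principle for $(\widetilde{P_i})$ involves only $h_1,\dots,h_J$. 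The multiplier analysis, including its use of (H5), is then verbatim that of Theorem~\ref{pm}.

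\textbf{Step 3(a) and 3(b).} This route also disposes of Step~3(a), which as stated is not correct. (H4)(iii) for the augmented family would be required at points far from the ball where some $h_j\in[0,\eta_0]$ but $h_{J+1}>\eta_0$. There neither compactness nor $(\widetilde{H4})$ gives anything. The proofs only use (H4)(iii) through the gradient bound on the sublevel set $\{\tilde h\le\eta\}$. The ball term makes that set bounded, so compactness applies. Your obstruction in 3(b) is exactly Case~2 of Lemma~\ref{lhy}. It is excluded for the given radius, with no genericity argument needed: prox-regularity would force $d_{C(t)}(\tilde y(t))=\delta_0$, while in fact $d_{C(t)}(\tilde y(t))<\delta_1$.

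\textbf{Two smaller points.} A subset of an $r$-prox-regular set is not automatically prox-regular, so ``inherited'' in Step~1 really relies on 3(b). The mollification of $\tilde y$ is unnecessary, since a Lipschitz $\tilde y$ suffices for differentiating $t\mapsto\tilde h(t,x(t))$ a.e.\ in Lemma~\ref{cvg}.
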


 \ \\

\begin{rem}
The local uniform prox-regularity condition $(\widetilde{H4})$ (iv) is always satisfied when the set $C(t)$ is uniformly prox-regular (as in \cite{ChamZeid}) and conditions $(\widetilde{H4})$ (i)-(iii) hold. Indeed, in these circumstances, the minimizer is Lipschitz continuous and, by using the property established in \cite[Remark 3.5]{ChamZeid}, one can take $\tilde{y} = \bar{x}$ and thereby obtain $(\widetilde{H4})$ (iv). We observe that the boundedness of the controlled dynamics function $f$ (see hypothesis (H3)(ii)) alone does not guarantee the Lipschitz regularity of the reference minimizer arc $\bar x$ of problem (P) (as it would obviously happen for a `classical' optimal control problem); this is due to the presence of the normal cone to the moving set $C(\cdot)$ in the dynamic constraint. Lipschitz regularity of $\bar x$ can be proved by imposing additional assumptions on the moving set $C(\cdot)$, such as (global) uniform prox-regularity (cf. the regularity results and related discussions in \cite{N_Th,ChamZeid}). Finally, notice also that condition (H4) implies the validity of the local constraint qualification $(\widetilde{H4})$ when some additional hypotheses are imposed, such as global hypomonotonicity conditions on the gradients of the functions $h_j$'s, which would guarantee uniform prox-regularity properties of $C(\cdot)$ (the latter can be derived using results such as \cite[Theorem 3.1]{ANTh}).
\end{rem}

\ \\

\noindent 
{\bf Example 1} 
\vspace{0.05 in}

\noindent
This example is a variation (adapted to the sweeping process framework considered in our paper) of a simple optimal control problem in resource economics proposed in \cite{Bettiol-Vinter-SICON2025}, where the purpose was to highlight the distinction between the necessary conditions introduced in \cite{Bettiol-Vinter-SICON2025} and the previously established ones (as discussed in detail in \cite{Bettiol-Vinter-SICON2025}). Here, the objective is to illustrate the applicability of the new necessary conditions in the sweeping process setting provided by Theorem \ref{pm} and Proposition \ref{libre} for the measurable time dependent case, which differ from results that can be obtained in the presence of an autonomous dynamics function $f$, in particular with regard to the transversality condition at time $\bar T$ (cf. earlier results established for free end-time sweeping process problems \cite{CMNN2024}, where only the autonomous case was considered, and Corollary \ref{cor} below).

\[ 
(FT1) \hspace{0.4 in} \left\{
\begin{array} {l}
\mbox{Minimize } - \int_0^T \Big( x(t)-at -b(t)u(t) \Big) dt - 3 x(T) \\
\mbox{over $T\in [0,3]$, meas. $u:[0,T]\rightarrow \R$ and $x  \in W^{1,1}(0,T)$ s.t.} \\
\dot{x}(t)\in u(t) -N_C(x(t)), \mbox{ a.e. } t \in [0,T], \\
u(t) \in [0, 1], \mbox{ a.e. } t \in [0,T],
\\
x(0)=0\,.
\end{array}
\right.
\]
Here, $C:=\{x\le 1\}$, $a\in(\frac{5}{4},\frac{11}{8})$ and
$$
b(t) :=
\left\{
\begin{array}{ll}
5 &\mbox{for } -\infty < t \leq 1
\\
\frac14 &\mbox{for } 1 < t < 2\\
 4 &\mbox{for } 2 \leq t < \infty\,.
\end{array}
\right.
$$
A standard state augmentation technique allows us to substitute the integral term in the cost of (FT1) with an auxiliary state variable evaluated at the end time $T$. This leads to a free end-time problem with discontinuous (measurable) time-dependent data of the form (P). Observe that all the hypotheses for the validity of the maximum principle of Proposition \ref{libre} are satisfied; therefore, we can apply the necessary conditions of optimality in the normal form (i.e., with a unit cost multiplier). 
In this case, the Hamiltonian is
$$
{\cal H} (t,x,p,u) = (p- b(t))u  -at +x 
$$
and the maximized Hamiltonian is $H(t,x,p) = (p- b(t))^+   -at + x $.
Let $(\bar T, \bar x, \bar u, p)$ be an extremal, together with the associated Radon measure $\mu$ and the bounded measurable function $\eta$ on $[0, \bar T]$. We have 
\begin{equation}
\label{ex0}
\dot{\bar x}(t) = \bar u(t) - \eta(t),
 \qquad\text{and } \quad 
 \eta(t) > 0 \Rightarrow \bar x(t)=1, \; \mbox{a.e. } t\in [0, \bar T].
\end{equation}
The costate arc $p$ is a solution to the differential equation $dp(t) = d\mu(t) -1$ on $[0, \bar T]$ such that $p(\bar T)=3$. 
 The Weierstrass condition tells us that 
\begin{equation}
\label{ex1}
\bar u(t)\in
\left\{
\begin{array}{ll}
\{1 \}&\mbox{if }  p(t)-b(t)>0
\\
\left[0,1\right] &\mbox{if }  p(t)-b(t)=0
\\
\{0 \} &\mbox{if }  p(t)-b(t)<0.
\end{array}
\right.
\end{equation}
We now turn to examine the free-time transversality condition at time $\bar T$, that yields
$$
 N_{[0,3]}(\bar T) \cap \Big(\underset{t \rightarrow \bar T}{\mbox{super-ess}}\, H(t,\bar x(\bar T),p(\bar T))  -  p(\bar T) \times  \nu \Big)  \neq \emptyset , \mbox{ for some } \nu \in  N_{C}(\bar x (\bar T)).
$$
 We have
 $$
 \underset{t \rightarrow \bar T}{\mbox{super-ess}}\, H(t,\bar x(\bar T),p(\bar T)) =
 \left\{
 \begin{array}{ll}
 \{\bar x(\bar T)-a\bar T\} & \mbox{if } \bar T \in (0,1)
 \\
 \{\bar x(\bar T)-a\bar T\}\cup\{\bar x(\bar T) + 2.75 - a\bar T\} & \mbox{if } \bar T = 1
 \\
  \{\bar x(\bar T) + 2.75 - a\bar T \}  & \mbox{if } \bar T \in (1,2)
   \\
\left[\bar x(\bar T) - a\bar T ,\bar x(\bar T) + 2.75 - a\bar T \right]
 & \mbox{if } \bar T = 2
 \\
  \{ \bar x(\bar T) - a\bar T\} & \mbox{if } \bar T \in (2,3] 
 \end{array}
\right.
$$ 
We deduce that necessarily $\bar T \in [1,2]$. 
The complementary slackness condition `$\eta(t) p(t)=0$ a.e. on the set $\{ t: \bar x(t) =1 \}$', combined with the Weierstrass condition and \eqref{ex0} implies that it is not possible to have `$\bar u(t)>0$' for $t\in {\cal E}$, where ${\cal E}\subset \{ s: \bar x(s) =1 \}$ is a set of positive measure. Moreover, reasoning further using information provided by the Weierstrass condition and the costate differential equation, we obtain that $\bar x(\bar T)=1$ and $\bar x(t)<1$ when $t< \bar T$. Observe also that all the extremals are admissible processes also for a modified version of problem (FT1) in which $C=\R$. Reasoning as above (now taking $C=\R$) reveals that the unique (global) minimizer is $(\bar T=2, \bar x, \bar u)$, where
\begin{equation}
\label{ex_2}
(\bar u(t), \bar x (t))=
\left\{
\begin{array}{ll}
(0,0) &\mbox{for }  t \in [0,1) 
\\
(1, t -1) &\mbox{for }  t \in [1, \bar T=2]\ .
\end{array}
\right.
\end{equation}
Observe that the free-time transversality condition at time $\bar T=2$ provided for the autonomous case in \cite[Theorem 5.2]{CMNN2024} (cf. Corollary \ref{cor} below) would become, in the present case,
$$
\frac{1}{\bar T}\int_0^{\bar T}[(p(t)-b(t))\bar u (t) + \bar x(t) -at] dt\in N_{[0,3]}(\bar T=2)= \{0\}.
$$
However, it is straightforward to see that this condition is never satisfied in this example, regardless of the choice of the multiplier $p$ (provided it is compatible with the minimizer $(\bar T=2, \bar x, \bar u)$ described above). This highlights the fact that, also for sweeping process problems, the necessary conditions obtained in the autonomous case must be revised, and it is crucial to consider different (more refined) necessary conditions involving the notions of  sub- or super-essential values of the maximized Hamiltonian.

\begin{rem}
Notice that, in the example proposed in \cite{Bettiol-Vinter-SICON2025} the term `$at$' in the integral is replaced by `$2$'. A similar analysis can be employed to derive analogous properties for the unique (now local) minimizer $(\bar T=2, \bar x, \bar u)$, which coincides with the minimizer described in \eqref{ex_2}. Observe in particular that the free-time transversality condition at time $\bar T=2$ provided for the autonomous case in \cite[Theorem 5.2]{CMNN2024} is never satisfied in this example as well.
\end{rem}

\noindent {\bf Example 2}
\vspace{0.05 in}

\noindent
Consider the following optimal control problem

\[ (FT2) \hspace{0.4 in} \left\{
\begin{array} {l}
\mbox{Minimize } T - 2x(T) \\
\mbox{over intervals } [0,T], \mbox{ arcs }
x  \in W^{1,1}([0,T];\R) \\
\mbox{\hspace{0.5 in}and measurable functions }
u :[0,T]\rightarrow \R \mbox{ satisfying } \\
\dot{x}(t) \in u(t) -N_{(-\infty,1]}(x(t)) \mbox{ a.e. } t \in [0,T], \\
u(t) \in [0,1] \mbox{ a.e. } t \in [0,T],
\\
S=0,\,x(S)=0.
\end{array}
\right.
\]
It is immediate to see that the optimal solution is given by 
\[
\bar T=1,\qquad \bar x(t)=t,\qquad \bar u(t)=1.
\]
Assume that the terms $p(\bar S)\cdot\nu_0$ and $p(\bar T)\cdot\nu_1$ in  condition (iv) of Theorem \ref{pm} are replaced by the essential value sets mentioned in Remark \ref{nc}, and let us derive a contradiction.  
Then, applying Proposition \ref{libre}, there exist $\xi_1\in \underset{t\to1}{\operatorname{super\text{-}ess}} \left[\sup_{u\in[0,1]} p(1)\,u\right]$ and 
$p\in NBV([0,1];\R)$ such that
\[
\xi_1 = 1 =\frac12\, p(1).
\]
On the other hand, the inclusion $\xi_1\in \underset{t\to1}{\operatorname{super\text{-}ess}}\left[\sup_{u\in[0,1]} p(1)\,u\right]$ yields 
\[
\xi_1
   = \sup_{u\in[0,1]} p(1)\,u
   = p(1),
\]
which provides the desired contradiction.

\subsection{The Lipschitz Time-dependent Case}

We now proceed to provide necessary conditions for the Lipschitz time-dependent case. 
For a local minimizer $([\bar S, \bar T],\bar x, \bar u)$ of $(P)$, with $\bar S < \bar T$, we shall assume that hypotheses (H1),(H2)$'$,(H3)$'$, (H4)$'$ and {\rm (H5)} are satisfied, for some $\bar \delta>0$. Here, the stronger conditions (H2)$'$, (H3)$'$ and (H4)$'$ replace (H2), (H3) and (H4), previously considered in the measurable time-dependent case:
\begin{itemize}
\item[(H2)$'$]  {$U$ is a measurable subset of $\R^m$ (independent of $t$);}

\item[(H3)$'$] hypothesis (i) of (H3) hold, and in addition
\begin{itemize}
  \item[$(ii)'$]  there exist $k_f \in L^1(\bar S, \bar T )$ and $c_f>0$ such that
  \[
  |f(t',x',u)-f(t'',x'',u)|\le k_f(t)(|x'-x''|+|t'-t''|),\qquad |f(t,x',u)|\le c_f,
  \]
  for all $(t'',x''),(t',x')\in (t,\bar x (t)) + \bar \delta \B$ and all $u\in U$, a.e. $t\in [\bar S, \bar T ] $; 
\end{itemize}
\item[(H4)$'$] hypotheses (i), (ii) and (iii) of (H4) hold, and in addition  $\nabla_{t}\nabla_{x} h_j(t,x)$ exists and is continuous with respect to $(t,x)$ on the `tube' $\{(t',x')\in (t,\bar x (t)) + \bar \delta \B \,:\, t\in [\bar S, \bar T ] \}$.
\end{itemize}

\ \\

\begin{thm}\label{pml}
\textbf{(Free End-Time Maximum Principle - Lipschitz Time Dependence)}
Let $([\bar S,\bar T],\bar x,\bar u)$ be a local minimizer for $(P)$ such that $\bar T - \bar S>0$.
Assume that (H1), (H2)$'$, (H3)$'$, (H4)$'$ and (H5) hold for some $\bar \delta>0$.
Then, there exist a couple of functions $(a,p) \in NBV([\bar S,\bar T];\R \times \R^n)$, a scalar $\lambda\ge 0$, finite signed Radon measures $\dd\mu_j$ on $[\bar S,\bar T]$, and bounded measurable functions $\eta_j:[\bar S,\bar T]\to [0,+\infty)$ for $j=1,\dots,J$, such that 
\begin{itemize}
    \item[(i):] $(a,p,\lambda)\neq 0$;
    
    \item[(ii):] $\dot{\bar x}(t)= f(t,\bar x(t),\bar u(t))- \sum_{j=1}^J \eta_j(t)\,\nabla_x h_j(t,\bar x(t)), \qquad \eta_j(t)\neq0 \Rightarrow h_j(t,\bar x(t))=0, \text{ a.e. } t\in[\bar S,\bar T] $;
    
    \item[(iii):] 
		$\displaystyle(\dd a,-\dd p) \in \operatorname{co}\,\partial_{t,x}\mathcal H(t,\bar x(t),p(t),\bar u(t))\,\dd t$
		 
	\hspace{0.2 in}	$\displaystyle-\sum_{j=1}^{J}\Big(\nabla_t h_j(t,\bar x(t)), \nabla_x h_j(t,\bar x(t))\Big)\,\dd\mu_j(t) - p(t)\sum_{j=1}^{J}\eta_j(t)\, \Big( \nabla_{t}\nabla_{x} h_j(t,\bar x(t)) ,D_x^2 h_j(t,\bar x(t))\Big)\,\dd t$;
		
    \item[(iv):]     
    $\displaystyle
    (-a(\bar S),p(\bar S),a(\bar T),-p(\bar T))
    \in
    \lambda\,\partial g(\bar S,\bar x(\bar S),\bar T,\bar x(\bar T))
    + N_E(\bar S,\bar x(\bar S),\bar T,\bar x(\bar T)) $

		\hspace{3.2 in} $
		\displaystyle + \sum_{j\in I(\bar S,\bar x(\bar S))} \beta_j\,\nabla h_j(\bar S,\bar x(\bar S))\times \{0\} \times \{0\} $
		
		for some $\beta_j\ge 0$,  $j\in I(\bar S,\bar x(\bar S))$;
    
		\item[(v):]
$\displaystyle {\cal H} (t,\bar{x} (t), p(t), \bar{u} (t))
\;=\; \max_{u \in U} {\cal H} (t, \bar{x} (t), p(t), u) \quad
\mbox{a.e.  }t \in [\bar{S},\bar{T}] $;
\item[(vi):]
${\cal H}(t, \bar{x} (t), p(t), \bar{u} (t)) \; =\; a(t) \quad
\mbox{a.e. } t \in [\bar{S},\bar{T}]$;
    
    \item[(vii):] $\eta_j(t)\,p(t)\cdot\nabla_x h_j(t,\bar x(t))=0$,
    \quad for each $j\in I(t,\bar x(t))$, a.e.\ $t\in[\bar S,\bar T]$;
    
    \item[(viii):] $\operatorname{supp}(\dd\mu_j)\cup \operatorname{supp}(\eta_j) \subset \{\,t\in[\bar S,\bar T]: h_j(t,\bar x(t))=0\,\}$, \quad for each $j=1,\dots,J$;
    
    \item[(ix):] $p(t)\cdot\nabla_x h_j(t,\bar x(t))\,\dd\mu_j(t)$
    \text{is a nonnegative measure}, \quad for each $j=1,\dots,J$.
\end{itemize}
\end{thm}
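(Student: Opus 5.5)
The plan is to reduce Theorem \ref{pml} to a fixed end-time sweeping process problem via the standard time-reparametrization, with time treated as an extra state variable, and then apply (the fixed-time core of) Theorem \ref{pm} to the reformulated problem. Concretely, for the local minimizer $([\bar S,\bar T],\bar x,\bar u)$ we consider the fixed interval $[0,1]$ and new controls $(w,v)$, with $v(s)\in U$ and $w(s)\in[1/2,3/2]$. We introduce states $(\tau,y)$ obeying $\dot\tau(s)=w(s)(\bar T-\bar S)$ and $\dot y(s)\in w(s)(\bar T-\bar S)\,[f(\tau,y,v)-N_{C(\tau)}(y)]$, together with the endpoint constraint $(\tau(0),y(0),\tau(1),y(1))\in E$. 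The reformulated problem is autonomous in $s$. Its moving set is $\tilde C:=\{(\tau,y): h_j(\tau,y)\le 0\}$, now regarded as a (time-independent) set in $\R^{1+n}$. Since $w>0$, the scaled normal cone term coincides with the normal cone to $\tilde C$ in the $y$-directions. The reference process is $\tau(s)=\bar S+s(\bar T-\bar S)$, $w\equiv 1$, $v=\bar u\circ\tau$, and it remains a local minimizer for the reparametrized problem.

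The main obstacle is that the new dynamics is not exactly of the form (P). The normal cone to $\tilde C$ at $(\tau,y)$ contains the components $\nabla_t h_j$, whereas $\dot\tau$ is not swept. Rather than invoking Theorem \ref{pm} as a black box, I therefore rerun its proof from Section 4 on the augmented system. That proof uses the penalization approximation $\dot y=w(\bar T-\bar S)[f-\sum_j \gamma_k e^{\gamma_k h_j}\nabla_x h_j]$ (or the $h_j^+$-based penalization of Section 3), in which the $\tau$-equation is unpenalized, combined with Ekeland's principle.

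Hypotheses (H3)$'$ and (H4)$'$ are what make this rerun work. Under (H3)$'$, the penalized right-hand side is Lipschitz in $(\tau,y)$ with an $L^1$ bound. Under (H4)$'$, the existence and continuity of $\nabla_t\nabla_x h_j$ allow us to differentiate the penalty term in $\tau$. The constraint qualifications (H4)(iii) and (H5) enter only through $\nabla_x h_j$, so the estimates of Section 3 on $\eta^k_j:=\gamma_k e^{\gamma_k h_j}$ (uniform $L^\infty$ bounds and weak$^*$ convergence to $\eta_j$) carry over unchanged. Applying the smooth maximum principle to the approximating problems gives costates $(q^k,p^k)$ for $(\tau,y)$, with $-\dot q^k$ containing the terms
\[
\partial_t\mathcal H-\sum_j p^k\cdot\eta^k_j\nabla_t\nabla_x h_j-\sum_j \gamma_k\eta^k_j (p^k\cdot\nabla_x h_j)\nabla_t h_j .
\]
The last term, $\gamma_k\eta^k_j(p^k\cdot\nabla_x h_j)$, is precisely the quantity whose weak$^*$ limit defines $\dd\mu_j$ in the $x$-costate. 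Consequently the same measures $\dd\mu_j$ appear in both components, paired with $(\nabla_t h_j,\nabla_x h_j)$, which is exactly the structure of (iii).

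Passing to the limit as in Section 4 (using Helly's theorem for the BV costates, the nonnegativity of $p\cdot\nabla_x h_j\,\dd\mu_j$, and complementary slackness) yields (ii), (iii), (v), (vii)--(ix). The $\tau$-costate gives $a:=-q$ after undoing the time change $s\mapsto t$. The $w$-control is unconstrained at the interior value $w=1$, so its Weierstrass condition gives $q(s)+\mathcal H(\tau,y,p,v)-\sum_j\eta_j p\cdot\nabla_x h_j=0$ a.e. By (vii) the sum vanishes, and this is (vi). The transversality condition (iv) follows from that of the fixed-time problem on $(\tau(0),y(0),\tau(1),y(1))$. The extra term $\sum_j\beta_j\nabla h_j(\bar S,\bar x(\bar S))\times\{0\}\times\{0\}$ arises, exactly as in Theorem \ref{pm}, from the initial constraint $y(0)\in C(\tau(0))$, which is now a constraint on $(\tau(0),y(0))$; hence the full gradient $\nabla h_j$ appears and not merely $\nabla_x h_j$. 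Finally, nontriviality (i) follows from $(p,\lambda)\ne0$ in the limit, via the normalization $\lambda_k+\|p^k\|+|q^k(1)|=1$ and the standard argument that $p^k\to0$ and $\lambda_k\to0$ force $q^k\to0$ uniformly through the $\dot q^k$ bound.
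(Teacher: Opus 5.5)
Your overall route (reparametrize time with a speed control $w$, carry $\tau$ as an extra state, rerun the Ekeland/penalization scheme of Section~4 on a fixed interval, and read (vi) off the Weierstrass condition in $w$) is the paper's. Your explicit use of (vii) to remove $\sum_j\eta_j\,p\cdot\nabla_x h_j$ from the $w$-stationarity relation is a detail the paper leaves implicit. That relation must be extracted on the sets $D_k$ where $(v_k,w_k)$ coincides with the reference control, as in \eqref{liega}, because after the Ekeland perturbation $w_k\not\equiv 1$. Two points need repair, and the second is a genuine gap.

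First, commit to the cubic penalty. Sections~3 and~4 use only $h=\frac13\sum_j (h_j^+)^3$, and Section~3 contains no estimates on $\gamma_k e^{\gamma_k h_j}$. With the exponential penalty you would have to prove from scratch the analogues of Lemma~\ref{cvg}(i) (uniform bounds on the multiplier densities along trajectories issuing from $C(S)$) and of Lemma~\ref{stg}. With the cubic scheme, $\eta^k_j=h_j^+(\tau_k,y_k)^2/\alpha_k\in[0,(3A)^{2/3}k_2]$ and the measure densities are $\frac{2}{\alpha_k}h_j^+(\tau_k,y_k)\,p^k\cdot\nabla_x h_j(\tau_k,y_k)$; your costate computation then reproduces \eqref{lidif}. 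Even so, the estimates do not carry over verbatim. In Lemma~\ref{stg}, two reparametrized trajectories are penalized at different physical times $\tau_k(s)\neq\tau'_l(s)$. This adds a $|\tau_k-\tau'_l|$ term, handled through the $t$-Lipschitz dependence of $h_j$ and $\nabla_x h_j$ under (H4)$'$. Also, the maximum principle applied to the approximating problems must be the nonsmooth one, since $f$ is only Lipschitz in $(t,x)$ and the cost contains $g\vee d_E$.

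Second, the nontriviality argument fails as written. Helly's theorem gives only pointwise convergence $p^k\to p$, and here $\dot p^k$ is merely bounded in $L^1$ (the penalty terms need not be equi-integrable). So $p=0$ does not by itself yield $\|p^k\|_{L^\infty}\to 0$, and without that your normalization $\lambda_k+\|p^k\|_{L^\infty}+|q^k(1)|=1$ gives no contradiction. Moreover, the bound on $\dot q^k$, whose penalty part is controlled in $L^1$ by $M\|p^k\|_{L^\infty}$ via the diagonal-dominance argument, only makes $q^k$ asymptotically constant, not zero.

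The missing ingredient is the backward Gronwall estimate of Section~4. Since $p\cdot D^2_x h\,p\ge -(3A)^{2/3}k_2K_2\,\alpha_k|p|^2$, one gets $-\frac{d}{ds}|p^k(s)|\le c(s)|p^k(s)|$ with $c\in L^1$ independent of $k$. Hence $\|p^k\|_{L^\infty}\le M|p^k(1)|\to M|p(\bar T)|$. Then $p=0$ forces $\|p^k\|_{L^\infty}\to 0$ and $\|\dot q^k\|_{L^1}\to 0$. The $w$-Weierstrass inequality gives $|q^k|\le M(\|p^k\|_{L^\infty}+\varepsilon_k)$ on $D_k$, which contradicts $|q^k(1)|\to 1$.

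Finally, your $\lambda_k$ weights the whole composite cost, whereas the $\lambda$ in (iv) weights only $\partial g$ after splitting the subdifferential of the max-function in the cost. To pass from one to the other you need the positivity of the perturbed cost (cf.\ \eqref{rha}) and the fact that elements of $\partial d_E$ at points outside $E$ are unit vectors. This is the ``standard argument'' by which the paper obtains (i) directly from the endpoint relation \eqref{litra}. That route uses only the values of $(a_k,p_k)$ at $\bar S$ and $\bar T$, so it needs no uniform convergence.
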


Similarly as the measurable time dependent case we have also for Lipschitz time dependent case conditions under which the maximum principle applies in the normal form.

\begin{prop}[Normality - Lipschitz Time Dependence]\label{libre1}
 {Let $([\bar S,\bar T],\bar x,\bar u)$ be a local minimizer for $(P)$ such that $\bar T - \bar S>0$.
Assume that (H1), (H2)$'$, (H3)$'$, (H4)$'$ and (H5) hold for some $\bar \delta>0$, and that $E=E_0\times\R^n$ (which implies that the state right endpoint is free), where $E_0$ is a closed subset of ${\rm Gr}\, C \times\R$).
Then, all the assertions of Theorem \ref{pml} are valid with 
\(
\lambda = 1.
\)}
\end{prop}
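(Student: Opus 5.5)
We prove Proposition \ref{libre1} by contradiction, taking Theorem \ref{pml} as given. Suppose the multipliers given by Theorem \ref{pml} for the minimizer $([\bar S,\bar T],\bar x,\bar u)$ have $\lambda=0$. We will show that then $(a,p)\equiv 0$, which contradicts the nontriviality condition (i). Once $\lambda>0$ is established, we divide all the multipliers $(a,p,\lambda,\dd\mu_j,\eta_j,\beta_j)$ by $\lambda$. All the conditions (ii)--(ix) are positively homogeneous in these multipliers, and $N_E$ is a cone, so this normalization gives the conclusion with $\lambda=1$.

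\emph{Step 1: the right endpoint.} Since $E=E_0\times\R^n$, every element of $N_E$ at $(\bar S,\bar x(\bar S),\bar T,\bar x(\bar T))$ has zero last component. The additional term $\sum_j\beta_j\nabla h_j(\bar S,\bar x(\bar S))\times\{0\}\times\{0\}$ in (iv) only affects the first $1+n$ components. Reading the last component of (iv) with $\lambda=0$ therefore gives $p(\bar T)=0$.

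\emph{Step 2: propagating $p\equiv 0$ backwards.} The costate inclusion (iii) is linear in the pair $(p,\dd\mu)$: the $x$-component of $\operatorname{co}\partial_{t,x}\mathcal H$ is bounded by $k_f(t)|p(t)|$, and the $\eta_j$-term is bounded by $C|p(t)|$ with $\eta_j$ bounded. The measure terms $\nabla_x h_j\,\dd\mu_j$ are the only part not controlled by $|p|$.
\begin{itemize}
\item[(a)] First I would use (viii), (ix) and the constraint qualification on the Gramian, (H5) together with (H4)(iii), in the form available at points of $\partial C(t)$ near $\bar x$. The aim is to bound the total variation of $\sum_j\nabla_x h_j\,\dd\mu_j$ on an interval $[t,\bar T]$ by a constant times $\sup_{[t,\bar T]}|p|$, plus the total variation of the absolutely continuous part.
\item[(b)] A Gronwall-type argument backward from $p(\bar T)=0$ then yields $p\equiv0$ on $[\bar S,\bar T]$.
\end{itemize}
\textbf{This is the main obstacle.} The positivity in (ix) and the diagonal dominance (H5) must combine to give such an estimate: pairing with $\nabla_x h_k$ isolates $\dd\mu_k$ up to a factor $\rho<1$. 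If a direct estimate fails, I would instead rerun the perturbation and penalization scheme used to prove Theorem \ref{pml} (Sections 3--5), observing that at the penalized level the state endpoint is free. There the approximate costates $p_i$ satisfy $p_i(\bar T)=\lambda_i\,\partial_{x_1}g$ exactly, together with an adjoint equation whose Lipschitz estimates are uniform in $i$. Hence if $\lambda_i\to0$, then $p_i\to0$, so the limit $p\equiv 0$.

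\emph{Step 3: the function $a$.} With $p\equiv0$ and $\lambda=0$, conditions (ii)--(ix) simplify.
\begin{itemize}
\item[(a)] Condition (vi) gives $a=\mathcal H(t,\bar x,0,\bar u)=0$ a.e.
\item[(b)] Since $a\in NBV$ is right-continuous on $(\bar S,\bar T)$, it vanishes on the open interval.
\item[(c)] The $t$-component of (iii) then involves only the measures $\nabla_t h_j\,\dd\mu_j$. These vanish because the estimate of Step 2 (or the vanishing of the approximating measures in the alternative approach) makes each $\dd\mu_j=0$ whenever $p=0$.
\end{itemize}
Thus $a$ is constant, hence $a\equiv0$ including the endpoint values.

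\emph{Step 4: the left endpoint.} At the left end, $E_0\subset{\rm Gr}\,C\times\R$, so we may take $\beta_j=0$ as in Remark \ref{nc}(d). Hence $(a,p,\lambda)=0$, which contradicts (i). It follows that $\lambda>0$, and normalization completes the proof.
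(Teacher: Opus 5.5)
Your main line (Step 2(a)--(b), and the appeal to it in Step 3(c)) cannot work. The obstruction is structural: the conclusions of Theorem~\ref{pml} are compatible with $\lambda=0$ under the hypotheses of the proposition, so no argument using only (i)--(ix) can produce normality.

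Take $n=J=1$, $h_1(t,x)=x$, $f\equiv 0$, $U=\{0\}$, $g\equiv 0$, $E=\{0\}\times\{0\}\times[1,2]\times\R$ (so $E_0\subset{\rm Gr}\,C\times\R$), and the minimizer $\bar S=0$, $\bar T=1$, $\bar x\equiv 0$, $\bar u\equiv 0$. For $c>0$ and $t_0\in(0,1)$, choose $\lambda=0$, $a\equiv 0$, $\eta_1\equiv 0$, $\beta_1=0$, $p=c$ on $[0,t_0)$, $p=0$ on $[t_0,1]$, and $\dd\mu_1=-c\,\delta_{t_0}$. This satisfies all of (i)--(ix):
$-\dd p=c\,\delta_{t_0}=-\nabla_x h_1\,\dd\mu_1$; the measure $p(t)\nabla_x h_1\,\dd\mu_1=-c\,p(t_0)\,\delta_{t_0}$ is zero, hence nonnegative, because the right-continuous $p$ vanishes at the atom; and $(0,c,0,0)\in N_E(0,0,1,0)$.

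So $p(\bar T)=0$ does not propagate backwards. Condition (ix) only fixes the sign of $p\cdot\nabla_x h_j\,\dd\mu_j$ and gives no control of $|\dd\mu_j|$ by $|p|$. Your estimate in 2(a) fails at $t=t_0$, where the variation is $c$ but $\sup_{[t_0,\bar T]}|p|=0$. Neither (H5) nor (H4)(iii) can help, since here $J=1$. The framing ``take Theorem~\ref{pml} as given'' therefore has to be abandoned. Normality must be extracted from the penalized problems, which is your fallback and is exactly the paper's route (the proof of Proposition~\ref{libre}, to which this proposition's proof refers).

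In that fallback, make the mechanism precise:
\begin{itemize}
\item \emph{Terminal costate.} With $E=E_0\times\R^n$ and the initial-state constraint dropped, the perturbed transversality condition has the form $\lambda_i\partial g+(1-\lambda_i)\partial d_{E_0}\times\{0\}+2\varepsilon_i(\cdots)$. The Ekeland terms do not involve $y(\bar T)$, so $|p_i(\bar T)|\le\lambda_i K_g$, with $K_g$ the Lipschitz constant of $g$.
\item \emph{Uniform bound on $p_i$.} The bound $\|p_i\|_{L^\infty}\le C|p_i(\bar T)|$ does not come from uniform Lipschitz estimates, since the coefficients $\frac{2}{\alpha_i}h_j^+$ blow up. It comes from the one-sided estimate $-\frac{\dd}{\dd t}|p_i|\le |p_i|(k_f+C)$, valid because $\frac{2}{\alpha_i}h_j^+\nabla_x h_j\nabla_x h_j^{\top}$ is positive semidefinite.
\item \emph{Measures and $a_i$.} Integrating \eqref{brn} gives $\int\frac{1}{\alpha_i}\sum_j h_j^+|p_i\cdot\nabla_x h_j|\,\dd t\le C\|p_i\|_{L^\infty}$. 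Hence the approximating measures and $\dot a_i$ tend to $0$ in $L^1$, and \eqref{liega} then gives $a_i\to 0$ uniformly.
\item \emph{Contradiction.} Take it directly in the perturbed transversality condition. The penalized cost is strictly positive, so whenever $1-\lambda_i>0$ the $d_{E_0}$-branch of the maximum is active and positive. Its subgradient then has unit norm in the $(S,x_0,T)$-components, while $(-a_i(\bar S),p_i(\bar S),a_i(\bar T))\to 0$. Hence $\lambda_i\to 0$ is impossible.
\end{itemize}

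This is where removing the initial-state constraint (Remark~\ref{nc}(d)) is essential: otherwise a term $\lambda_i^{(3)}\sum_j\beta^i_j\nabla h_j$ could cancel that unit vector. In your Step 4, by contrast, the $\beta_j$ are irrelevant, since they do not enter (i).

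Finally, when normalizing, do not divide $\eta_j$ by $\lambda$. Condition (ii) pins $\eta_j$ down through $\dot{\bar x}$, and the last term of (iii) is bilinear in $(p,\eta_j)$; rescale only $(a,p,\lambda,\dd\mu_j,\beta_j)$.
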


It is immediate to see that when the dynamics function $f$ is independent of the time variable $t$, since the Hamiltonian ${\cal H}$ becomes independent of $t$ as well, the costate inclusion (iii) of Theorem \ref{pml} becomes simpler and only the convex hull of the partial limiting subdifferential of the Hamiltonian $\operatorname{co}\,\partial_{x}\mathcal H$ plays a role. If in addition also $C$ is independent of $t$, then from the costate inclusion (iii), the Weierstrass condition (v), condition (vi) and the complementary slackness condition (vii) of Theorem \ref{pml} we deduce that
\begin{equation}\label{avH}
\int_{\bar S}^{\bar T} p(t) \cdot \dot{\bar x}(t) \ dt 
= \int_{\bar S}^{\bar T} {\cal H} (\bar{x} (t), p(t), \bar{u} (t)) \ dt  = \bar a \, (\bar T-\bar S),
\end{equation}
for some constant $\bar a\in \R$. 
This is summarized in the following corollary.

\begin{cor}\label{cor}
\textbf{(Free End-Time Maximum Principle - Case when $f=f(x,u)$)}
Let $([\bar S,\bar T],\bar x,\bar u)$ be a local minimizer for $(P)$ such that $\bar T - \bar S>0$.
Assume that (H1), (H2)$'$, (H3)$'$, (H4)$'$ and (H5) hold for some $\bar \delta>0$. Suppose in addition that $f$ does not depend on $t$: $f=f(x,u)$. 
Then, there exist a couple of functions $(a,p) \in NBV([\bar S,\bar T];\R \times \R^n)$, a scalar $\lambda\ge 0$, finite signed Radon measures $\dd\mu_j$ on $[\bar S,\bar T]$, and bounded measurable functions $\eta_j:[\bar S,\bar T]\to [0,+\infty)$ for $j=1,\dots,J$, such that conditions (i)-(ix) of Theorem \ref{pml} are satisfied and (iii) becomes:
\item[(iii)$'$:] 
		$\displaystyle -\dd p \in \operatorname{co}\,\partial_{x}\mathcal H(\bar x(t),p(t),\bar u(t))\,\dd t-\sum_{j=1}^{J}\nabla_x h_j(t,\bar x(t))\,\dd\mu_j(t) - p(t)\sum_{j=1}^{J}\eta_j(t)\, D^2_x h_j(t,\bar x(t))\,\dd t$, 
    
		$\displaystyle\dd a = -\sum_{j=1}^{J}\nabla_t h_j(t,\bar x(t))\,\dd\mu_j(t) - p(t)\sum_{j=1}^{J}\eta_j(t)\,\nabla_{t}\nabla_{x} h_j(t,\bar x(t))\,\dd t$.

\noindent	
If, in addition, also $C$ is independent of $t$, then we obtain also condition \eqref{avH} for some constant $\bar a\in \R$ such that $\bar a \; =\; {\cal H}(\bar{x} (t), p(t), \bar{u} (t)) $
a.e. $t \in [\bar{S},\bar{T}]$.
\end{cor}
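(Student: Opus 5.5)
The plan is to deduce Corollary \ref{cor} directly from Theorem \ref{pml}, and then to obtain \eqref{avH} by an integration argument. When $f=f(x,u)$, the un-maximized Hamiltonian ${\cal H}(x,p,u)=p\cdot f(x,u)$ does not depend on $t$. Hence every element of $\partial_{t,x}{\cal H}(t,\bar x(t),p(t),\bar u(t))$ has the form $(0,\zeta)$ with $\zeta\in\partial_x{\cal H}$; this holds because a function constant in $t$ has proximal subgradients with zero $t$-component, and this property passes to limits. So $\operatorname{co}\,\partial_{t,x}{\cal H}=\{0\}\times\operatorname{co}\,\partial_x{\cal H}$. Substituting this into (iii) of Theorem \ref{pml} and separating the $t$- and $x$-components gives exactly the two relations in (iii)$'$. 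All other assertions (i)--(ix) carry over unchanged.

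Next, suppose that $C$ is also independent of $t$. We may then take the $h_j$ independent of $t$, so that $\nabla_t h_j\equiv 0$ and $\nabla_t\nabla_x h_j\equiv 0$. The relation for $\dd a$ in (iii)$'$ then gives $\dd a=0$. Since $a\in NBV$ is right-continuous on $(\bar S,\bar T)$, it is constant there, say $a\equiv\bar a$; any jump at the endpoints is irrelevant for a.e.\ statements. Condition (vi) then yields ${\cal H}(\bar x(t),p(t),\bar u(t))=\bar a$ for a.e.\ $t$. Integrating over $[\bar S,\bar T]$ gives the second equality in \eqref{avH}.

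For the first equality in \eqref{avH}, I would use (ii) of Theorem \ref{pml}:
\[
p(t)\cdot\dot{\bar x}(t)=p(t)\cdot f(\bar x(t),\bar u(t))-\sum_j\eta_j(t)\,p(t)\cdot\nabla_x h_j(\bar x(t)).
\]
By (ii) and (viii), $\eta_j(t)\neq 0$ only when $j\in I(t,\bar x(t))$. On that set, the complementary slackness condition (vii) gives $\eta_j(t)\,p(t)\cdot\nabla_x h_j(\bar x(t))=0$, so the sum vanishes a.e. Thus $p\cdot\dot{\bar x}={\cal H}(\bar x,p,\bar u)$ a.e. This function is integrable because $p$ is bounded and $\dot{\bar x}\in L^1$, so integration gives \eqref{avH}.

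The only step that needs care is the claim $\partial_{t,x}{\cal H}=\{0\}\times\partial_x{\cal H}$ for a $t$-independent function. For limiting subdifferentials of a product-type function this is standard, since ${\cal H}(\cdot,p,u)$ is locally Lipschitz in $x$ by (H3)$'$. Even so, I would check it directly from the definitions: fix a proximal subgradient $(\tau,\zeta)$ and perturb only $t$ in the defining inequality. This forces $\tau=0$ and shows that $\zeta$ is a proximal subgradient in $x$. Passing to limits then gives the identity.
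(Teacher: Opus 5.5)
Your proposal is correct and follows essentially the same route as the paper. Like the paper, you specialize Theorem~\ref{pml} via $\operatorname{co}\,\partial_{t,x}\mathcal H=\{0\}\times\operatorname{co}\,\partial_x\mathcal H$; when $C$ (read as: the $h_j$) is time-independent, you read off $\dd a=0$ and combine (ii), (vi), (vii) to get \eqref{avH}. You also supply details the paper only asserts: the proximal-subgradient check forcing a zero $t$-component, and the a.e.\ vanishing of $\sum_j\eta_j\,p\cdot\nabla_x h_j$. You are right that the Weierstrass condition (v), which the paper cites, is not needed for \eqref{avH}.
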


\ \\

\begin{thm}\textbf{(Free End-Time Maximum Principle - Lipschitz Time Dependence - Local Hypotheses)}\label{pmo1}
Let $([\bar S,\bar T], \bar x, \bar u)$ be a local minimizer for $(P)$ such that $\bar T - \bar S > 0$. 
The assertions of Theorem \ref{pml}, Proposition \ref{libre1} and Corollary \ref{cor} remain valid when hypothesis {\rm (H4)} is replaced by $(\widetilde{H4})$ of Theorem \ref{pmo}.
\end{thm}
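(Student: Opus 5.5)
The plan is to reduce Theorem \ref{pmo1} to the global-hypothesis Lipschitz results (Theorem \ref{pml}, Proposition \ref{libre1}, Corollary \ref{cor}). The reduction follows the same localization used to pass from Theorem \ref{pm} to Theorem \ref{pmo}: the data near the reference arc are modified so that the global strict constraint qualification (H4)(iii) holds, while the minimizer and the local geometry of $C(t)$ near $\bar x$ are left unchanged.

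\textbf{Step 1: localized moving set.} Using $(\widetilde{H4})$(iv), I add one more constraint
\[
h_{J+1}(t,x):=\tfrac12\big(|x-\tilde y(t)|^2-\delta_0^2/4\big),
\]
which cuts $C(t)$ down to $\tilde C(t):=C(t)\cap(\tilde y(t)+\tfrac{\delta_0}{2}\B)$. Since $\|\bar x_e-\tilde y\|_{L^\infty}<\delta_1\le\delta_0/4$, the arc $\bar x$ stays strictly inside this ball, so $h_{J+1}$ is never active along $\bar x$. Consequently $\tilde I(t,\bar x(t))=I(t,\bar x(t))$, and all multipliers attached to $h_{J+1}$ vanish through the support conditions (viii).

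\textbf{Step 2: the time-regularity obstacle.} Theorem \ref{pml} needs (H4)$'$, in particular the existence and continuity of $\nabla_t\nabla_x h_{J+1}$ on the tube. Since $\tilde y$ is only $W^{1,\infty}$, this can fail. I would therefore replace $\tilde y$ by a mollification $\tilde y_\varepsilon$, which is smooth, satisfies $\|\tilde y_\varepsilon-\tilde y\|_\infty$ small, and keeps the same Lipschitz constant. This costs a slight loss in the margins $\delta_1$ and $\delta_0/2$, which is harmless.

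\textbf{Step 3: global hypotheses for the modified problem.} I would check, for the modified set $\tilde C(t)$, the global hypotheses (H4)(iii) and (H5), using boundedness of $\tilde C(t)$, the positive linear independence $(\widetilde{H4})$(iii), and the uniform $r$-prox-regularity.

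For (H4)(iii), the argument is by contradiction with compactness. Suppose $\gamma_0$ cannot be chosen. Then there are points $(t_i,x_i)$ with $x_i\notin\tilde C(t_i)$, $\eta_i\downarrow 0$, and convex weights $\alpha^i$ with $|\sum_j\alpha^i_j\nabla_x h_j(t_i,x_i)|\to0$. Compactness of the localized region lets us pass to a limit point $x\in\partial\tilde C(t)$ with limit weights $\alpha$. This contradicts positive linear independence at that point. That includes the index $J+1$ on the sphere: there $\nabla_x h_{J+1}=x-\tilde y$ points outward, and together with the inward/outward geometry of prox-regular sets this forces linear independence.

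For (H5) I expect the same local-uniformity argument to apply. Alternatively, since (H5) is required in both theorems, it is part of the hypotheses; one only has to verify that adding $h_{J+1}$ does not destroy it, which holds because $h_{J+1}$ is inactive near $\bar x$. If necessary, the Gramian check is restricted to the tube where the proof of Theorem \ref{pml} uses it.

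\textbf{Step 4: transfer the minimizer and apply Theorem \ref{pml}.} Admissible processes for the modified problem that are $\delta$-close to $\bar x$ are also admissible for (P). The reason is that, near $\bar x$, the two normal cones coincide: $N_{\tilde C(t)}(x)=N_{C(t)}(x)$ for $x$ in the interior of the ball. Trajectories with $\|x_e-\bar x_e\|\le\delta<\delta_0/4$ remain in that interior. Hence $\bar x$ is still a local minimizer for the modified problem. Applying Theorem \ref{pml} (respectively Proposition \ref{libre1}, Corollary \ref{cor}) to it gives multipliers for the constraints $j\le J+1$. Then $\eta_{J+1}=0$ and $\dd\mu_{J+1}=0$, and the $\beta_{J+1}$ term disappears because $J+1\notin I(\bar S,\bar x(\bar S))$. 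The normal cones in (iv) are unchanged, so we recover exactly the stated conditions. For Corollary \ref{cor}, the new constraint depends on $t$ only through $\tilde y_\varepsilon$, but since it is inactive, the formula for $\dd a$ and \eqref{avH} are unaffected.

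\textbf{Main obstacle.} I expect the hardest step to be the uniform strict constraint qualification of Step 3. It has to hold for points outside $\tilde C(t)$ within the $\eta_0$-layer, where the inactive-near-boundary constraints $I^+_{\eta_0}$ may include several $h_j$ together with $h_{J+1}$. This needs a careful compactness argument, and shrinking $\eta_0$ will likely be necessary.
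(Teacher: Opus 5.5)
\textbf{The gap is in Step 3.} The localization in Steps 1, 2 and 4 is sound. But applying Theorem \ref{pml} as a black box requires (H4)(iii) and (H5) \emph{globally} for the enlarged family $h_1,\dots,h_{J+1}$, and neither follows from $(\widetilde{H4})$.

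Condition (H4)(iii) is tested at every $x\notin\tilde C(t)$ through the index set $I^+_{\eta_0}(t,x)$ of the individual constraints. Far from the ball, $h_{J+1}$ is large and is not in this set, so only the original $h_j$ remain, and $(\widetilde{H4})$ says nothing about them away from $\partial C(t)$. Your compactness argument silently assumes that the offending points $x_i$ stay in a bounded region. Shrinking $\eta_0$ does not help. For instance, $h_1(x)=\arctan(x_2)/(1+x_1^2)$ on $\R^2$ satisfies $(\widetilde{H4})$ and (H5). Yet $0<h_1(x_1,\epsilon)\le\eta_0$ and $|\nabla_x h_1(x_1,\epsilon)|\to 0$ as $|x_1|\to\infty$. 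No added constraint can repair this, since (H4)(iii) with all the weight on $j=1$ already demands $|\nabla_x h_1|\ge\gamma_0$ at such points.

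Likewise, (H5) is a pointwise condition on all of $[\bar S-\bar\delta,\bar T+\bar\delta]\times R\B$. It includes points of the sphere where $h_{J+1}$ is active together with original constraints, so inactivity of $h_{J+1}$ along $\bar x$ is irrelevant. Take the orthant $\{x_1\le 0,\,x_2\le 0\}$ with $h_1=x_1$, $h_2=x_2$ (which satisfy (H5)), and suppose the sphere passes through the corner with $\nabla_x h_{J+1}=\lambda(1,1)$ there. Diagonal dominance for $k=J+1$ forces $\lambda\ge 1/\rho$, while for $k=1$ it forces $\lambda\le\rho$. This is impossible for $\rho<1$, whatever rescaling of $h_{J+1}$ you use.

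The paper does not invoke Theorem \ref{pml} for a modified problem. Instead it reruns the \emph{proof}, exactly as in the proof of Theorem \ref{pmo}, with $\tilde h=\frac13\sum_{j=1}^{J+1}(h_j^+)^3$ and $h_{J+1}=|x-\tilde y(t)|^2-\delta_0^2$. The strict constraint qualification enters that proof only through the gradient bound of Lemma \ref{ingt}. That bound is needed only on the sublevel set $\{\tilde h\le\eta\}$, which is confined near $\tilde y(t)+\delta_0\B$ because $h_{J+1}^+$ is small there. This is Lemma \ref{lhy}, proved by compactness, with the limiting case excluded either by positive linear independence on $\partial C(t)$ or by $d_{C(t)}(\tilde y(t))<\delta_1<\delta_0$.

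Uniform prox-regularity, which in the global setting is derived from (H4), is taken directly from $(\widetilde{H4})$(iv). Finally, (H5) is used only in the multiplier estimates along the perturbed minimizers, which converge uniformly to $\bar x$. For large $i$ they remain where $h_{J+1}<-\eta$, so $J+1\notin I_\eta$ and $\tilde h=h$ along them. Hence neither a Gramian condition involving $h_{J+1}$ nor any derivative of $h_{J+1}$ ever appears in the limit. Your fallback of ``restricting the Gramian check to the tube'' points in this direction. Carrying it out, however, means abandoning the black-box use of Theorem \ref{pml} in Step 4 and treating (H4)(iii) in the same way.
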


\section{Preliminary Results}

 {
Suppose that $([\Bar{S}, \Bar{T}],\Bar{x},\Bar{u})$ is a local minimizer for $(P)$. Suppose that (H1)-(H5) are satisfied for some $\bar \delta >0$ and, reducing the size of $\bar \delta >0$ if necessary, that  
$$
g(S,x(S),T,x(T)) \geq g(\bar{S},\bar{x}(\bar{S}),\bar{T},\bar{x}(\bar{T}))
$$
for every admissible process $([S,T],x,u)$ for ($P$) such that
\begin{equation*}
|S - \bar S  |    +  |T - \bar T | + ||x_e - \bar x_e ||_{L^\infty([S\wedge \bar S,T \vee \bar T],\R^n)} \leq \bar \delta.
\end{equation*}
}

\noindent
We introduce the scalar function $h: \R \times \R^n \to \R$ by
\[
h(t,x) := \frac{1}{3}\sum_{j=1}^J h_j^+(t,x)^3, 
\]
where the family of functions $(h_j)_{1\leq j \leq J}$ is given by {\rm (H4)} (i).

\ \\

\noindent
{\bf A reduction technique.} 
Observe that assumptions (H3) can be replaced by the stronger hypothesis, written (H3)*, in which the conditions are satisfied globally w.r.t. the $x$ variable (i.e. not only in a ``tube'' around the reference arc $\bar x (.) $). This can be done employing a (well-known) ``truncation'' technique (cf. \cite{Bettiol-Vinter-book2024}). 

\begin{lem}\label{ingt}
Suppose that {\rm (H4)} is satisfied. Then, there exist some constants \(\gamma,\eta>0\) such that 
\[
\bigl|\nabla_x h(t,x)\bigr|\ge \gamma\sum_{j=1}^J h_j^+(t,x)^2, 
\]
for all $(t,x) \in \{ (s,y) \; : \; s\in [\bar{S}-\bar \delta,\bar{T}+\bar \delta], \;   h (s,y) \le \eta \} $. 
\end{lem}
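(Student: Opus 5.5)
Since $h=\frac13\sum_j (h_j^+)^3$ and $s\mapsto (s^+)^3/3$ is $C^1$ with derivative $(s^+)^2$, we have
\[
\nabla_x h(t,x)=\sum_{j=1}^J h_j^+(t,x)^2\,\nabla_x h_j(t,x).
\]
The plan is to read this as a positive combination of the gradients $\nabla_x h_j$ and to apply the strict positive linear independence condition (H4)(iii) after normalizing the weights.

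\emph{Choice of $\eta$ and the trivial case.} Take $\eta:=\eta_0^3/3$. If $h(t,x)\le\eta$, then $h_j^+(t,x)^3\le 3h(t,x)\le \eta_0^3$ for every $j$, so $h_j^+(t,x)\le\eta_0$. If $x\in C(t)$, all $h_j^+(t,x)$ vanish. Then both sides of the claimed inequality are zero and nothing needs proving.

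\emph{The case $x\notin C(t)$.} Here the indices with $h_j^+(t,x)>0$ are exactly those with $0<h_j(t,x)\le\eta_0$. They all belong to $I^+_{\eta_0}(t,x)$, and at least one exists. Indices $j$ with $h_j(t,x)=0$ also lie in $I^+_{\eta_0}(t,x)$ but contribute zero, so the sum may be taken over $I^+_{\eta_0}(t,x)$. Put $\sigma:=\sum_j h_j^+(t,x)^2>0$ and $\alpha_j:=h_j^+(t,x)^2/\sigma$ for $j\in I^+_{\eta_0}(t,x)$. Then $\alpha_j\ge0$ and $\sum\alpha_j=1$. Since $x\in\R^n\setminus C(t)$ and $t\in[\bar S-\bar\delta,\bar T+\bar\delta]$, (H4)(iii) gives
\[
|\nabla_x h(t,x)|=\sigma\,\Big|\sum_{j\in I^+_{\eta_0}(t,x)}\alpha_j\nabla_x h_j(t,x)\Big|\ge \gamma_0\,\sigma .
\]
This yields the claim with $\gamma:=\gamma_0$.

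The only delicate point is to ensure that the indices carrying nonzero weight lie in $I^+_{\eta_0}(t,x)$, where (H4)(iii) applies. This is what the sublevel restriction $h\le\eta$ secures: it forces each $h_j^+\le\eta_0$. Everything else is direct computation; note that the bound $K_h$ and (H5) are not needed here.
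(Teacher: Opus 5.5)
Your argument is correct and is essentially the paper's. Both write $\nabla_x h=\sum_j h_j^+(t,x)^2\nabla_x h_j(t,x)$, normalize the weights $h_j^+(t,x)^2/\sum_k h_k^+(t,x)^2$ and invoke (H4)(iii); the paper phrases this as a contradiction argument, which is unnecessary since $\gamma_0$ is already uniform. Your direct version yields the explicit constants $\gamma=\gamma_0$ and $\eta=\eta_0^3/3$, and your threshold is the right one for forcing $h_j^+\le\eta_0$ (the paper's stated range $\eta<\sqrt[3]{\eta_0}/3$ only gives $h_j^+<\eta_0^{1/9}$, which does not suffice when $\eta_0<1$).
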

\begin{proof} Take $\eta\in (0,\sqrt[3]{\eta_0}/3)$. 
We argue by contradiction: suppose there exist sequences $\gamma_i \downarrow 0$,  $(t_i)_{i \in \N} \subset [\bar{S}-\bar\delta,\bar{T}+\bar\delta]$ and $x_i $ such that $h(t_i,x_i) \le \eta$ and 
\[
\bigl|\nabla_x   h(t_i,x_i)\bigr|< \gamma_i\sum_{j=1}^{J} h_j^+(t_i,x_i)^2.
\]
Then, we would have
\[
\left| \sum_{j=1}^{J} \frac{h_j^+(t_i,x_i)^2}{\sum_{j=1}^{J} h_j^+(t_i,x_i)^2} \nabla_x h_j(t_i,x_i) \right| \to 0 \text{ as } i \to +\infty,
\]
which contradicts condition {\rm (H4)} (iii).
\end{proof}

\noindent
Notice that it is not restrictive to assume that $\nabla_x h_j(t,x) = 0$ for all $(t,x)$ such that $h_j(t,x) \leq -\eta$, for all $j = 1, \dots, J$. Indeed, otherwise, using a standard regularization argument (one can apply, for instance, the regularization technique in \cite{ChamZeid}), we can replace $h_j$ with a function $\hat{h}_j$ that coincides with $h_j$ when $h_j(t,x) \geq -\frac{\eta}{2}$, satisfies $\nabla_x \hat{h}_j(t,x) = 0$ when $h_j(t,x) \leq -\eta$, and is such that (H5) remains in force.
Finally, by adjusting the size of $\rho > 0$ if necessary, we may suppose that the following strengthened condition holds:
\begin{itemize} 
\item[(H5)*:] For all $R>0$ there exist \(\rho\in(0,1)\) 
such that for each \(k\in I_\eta^+(t,x)\), we have
\[
\sum_{\substack{j\in I_\eta(t,x)\\ j\neq k}}
\bigl|\nabla_x h_k(t,x)\cdot\nabla_x h_j(t,x)\bigr|
\le \rho\,\bigl|\nabla_x h_k(t,x)\bigr|^2,
\]
for all $x \in R\B$ and all $t \in [\bar S- \bar \delta, \bar T +\bar \delta]$.
\end{itemize}
Here, $I_\eta(t,x)$ denotes the set of active indices `near the boundary'
defined for all $(t,x) \in \R\times \R^n$ by
\[
I_\eta(t,x):=\left\{\,j\in\{1,\dots,J\}:\, |h_j(t,x)| \le \eta\,\right\}.
\]
Recall that $I_\eta^+(t,x)$ denotes the set of active indices `near the boundary from the outside': 
\[
I_\eta^+(t,x):=\left\{\,j\in\{1,\dots,J\}:\, 0\le h_j(t,x) \le \eta\,\right\}.
\]

\begin{lem}\label{eqn}
There exist constants \(k_1, k_2 > 0\) such that, for all 
\((v_1,\dots,v_J) \in [0,+\infty)^{J}\),
\[
k_1 \left(\sum_{j=1}^{J} v_j^3 \right)^{2/3} 
\leq
\sum_{j=1}^{J} v_j^2
\leq
k_2 \left(\sum_{j=1}^{J} v_j^3 \right)^{2/3}.
\]
\end{lem}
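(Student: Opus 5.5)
This is the equivalence of the $\ell^2$ and $\ell^3$ norms on $\R^J$, restricted to the nonnegative orthant. The plan is to prove it by homogeneity and compactness, or equivalently by two elementary norm comparisons. Write $\|v\|_p := (\sum_{j=1}^J v_j^p)^{1/p}$. Then the claim reads
\[
k_1 \|v\|_3^2 \le \|v\|_2^2 \le k_2 \|v\|_3^2,
\]
so it suffices to find constants $c,C>0$ with $c\|v\|_3\le \|v\|_2\le C\|v\|_3$ and set $k_1=c^2$, $k_2=C^2$. The case $v=0$ is trivial, so assume $v\neq 0$.

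\textbf{Lower bound.} We take $c=1$, i.e.\ we show $\|v\|_3\le\|v\|_2$. By homogeneity, normalize $\|v\|_2=1$. Then each $v_j\in[0,1]$, so $v_j^3\le v_j^2$. Summing gives $\|v\|_3^3\le 1$, hence $\|v\|_3\le 1=\|v\|_2$. This yields $k_1=1$.

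\textbf{Upper bound.} We apply Hölder's inequality to $\sum_j v_j^2\cdot 1$ with exponents $3/2$ and $3$:
\[
\sum_{j=1}^J v_j^2 \le \Bigl(\sum_{j=1}^J v_j^3\Bigr)^{2/3} J^{1/3}.
\]
This gives $k_2=J^{1/3}$ directly.

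Alternatively, one could argue uniformly: the continuous function $v\mapsto \|v\|_2^2/\|v\|_3^2$ is homogeneous of degree $0$ and positive on the compact set $\{v\ge 0:\ \|v\|_3=1\}$, so it attains a positive minimum $k_1$ and a finite maximum $k_2$.

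There is no real obstacle here. The only point requiring care is the exponent bookkeeping in the Hölder step, namely checking that $(2\cdot\tfrac32)=3$ and that the conjugate exponent $3$ produces the factor $J^{1/3}$.
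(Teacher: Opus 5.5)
Your proof is correct and follows the same route as the paper, which simply invokes the equivalence of the $\ell^2$ and $\ell^3$ norms on $\R^J$. You make this quantitative, obtaining $k_1=1$ from $\|v\|_3\le\|v\|_2$ and $k_2=J^{1/3}$ from H\"older's inequality with exponents $3/2$ and $3$.
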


\begin{proof}
The result follows immediately from the equivalence of norms in finite-dimensional spaces.
\end{proof}

\begin{lem}\label{kap}  
Assume that {\rm (H4)} holds. Take $R>0$. Let $\varepsilon >0$, then there exists $\kappa_{\varepsilon}>0$ such that, for all $(t,x) \in [\bar{S}-\delta,\bar{T}+\delta] \times R\B$,
\[
h(t,x) \leq \kappa_{\varepsilon} \implies d_{C(t)}(x) \leq \varepsilon.
\]
\end{lem}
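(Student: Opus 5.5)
We argue by contradiction and use compactness. The time interval $[\bar S-\bar\delta,\bar T+\bar\delta]$ (the statement writes $\delta$; we read it as $\bar\delta$) and $R\B$ are compact. Suppose the claim fails for some $\varepsilon>0$. Then for $\kappa_i:=1/i$ there are points $(t_i,x_i)\in[\bar S-\bar\delta,\bar T+\bar\delta]\times R\B$ with $h(t_i,x_i)\le 1/i$ and $d_{C(t_i)}(x_i)>\varepsilon$. Passing to a subsequence, $(t_i,x_i)\to(\bar t,\bar x_0)$. Since each $h_j$ is continuous (it is $C^{1+}$ by (H4)(i)), so is $h$, and therefore $h(\bar t,\bar x_0)=0$. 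This means $h_j^+(\bar t,\bar x_0)=0$ for every $j$, that is, $\bar x_0\in C(\bar t)$.

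It remains to contradict $d_{C(t_i)}(x_i)>\varepsilon$. For this we need a lower semicontinuity property of the moving set: points of $C(\bar t)$ must be approximable by points of $C(t_i)$. Continuity of the $h_j$ alone does not give this. An interior point $z$ of $C(\bar t)$, meaning one with $h_j(\bar t,z)<0$ for all $j$, satisfies $h_j(t_i,y)<0$ for all $(t_i,y)$ near $(\bar t,z)$, so $z\in C(t_i)$ for large $i$. Points on the boundary are the delicate ones.

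We handle them with the constraint qualification in the form of Remark \ref{nc}(a). Since $\bar x_0\in C(\bar t)$ and the normal cone there is generated positively by $\nabla_x h_j(\bar t,\bar x_0)$, $j\in I(\bar t,\bar x_0)$, condition (H4)(iii) gives a direction $v$ with $\nabla_x h_j(\bar t,\bar x_0)\cdot v<0$ for all $j\in I(\bar t,\bar x_0)$. This is the Gordan alternative to positive linear independence, which (H4)(iii) implies by a limiting argument from outside $C(t)$. Hence $z:=\bar x_0+sv$ with $s>0$ small satisfies $h_j(\bar t,z)<0$ for all $j$. The active $j$ are negative by the first order expansion, and the inactive $j$ are negative by continuity. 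We may also take $|z-\bar x_0|<\varepsilon/3$.

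By joint continuity, $z\in C(t_i)$ for all large $i$. Since also $|x_i-\bar x_0|<\varepsilon/3$ for large $i$, we get
\[
d_{C(t_i)}(x_i)\le |x_i-z|\le |x_i-\bar x_0|+|\bar x_0-z|<\tfrac{2\varepsilon}{3}<\varepsilon,
\]
a contradiction.

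The main obstacle is the step producing the interior direction $v$. One has to deduce from (H4)(iii), which is stated only for points outside $C(t)$ and with indices in $I^+_{\eta_0}$, the positive linear independence of the active gradients at a boundary point $(\bar t,\bar x_0)$. I would do this by approaching $\bar x_0$ from outside, for instance along $\bar x_0+\tau w$ with $\tau\downarrow 0$ for a suitable $w$. Along such points the active indices at $(\bar t,\bar x_0)$ with $h_j\ge 0$ lie in $I^+_{\eta_0}$, and the bound $\gamma_0$ passes to the limit by continuity of $\nabla_x h_j$. A cleaner route is the one indicated in Remark \ref{nc}(a), which asserts these consequences of (H4)(iii) directly; I would invoke that remark. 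If needed, one can alternatively use the nonemptiness of ${\rm int}\,C(t)$ stated there together with a convexity-free Lipschitz approximation.
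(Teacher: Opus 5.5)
Your skeleton (contradiction, compactness, a limit point $\bar x_0\in C(\bar t)$, then the need for points of $C(t_i)$ close to $\bar x_0$) is the paper's. Its proof simply asserts that $(t,x)\mapsto d_{C(t)}(x)$ is continuous under (H4) and then invokes compactness. The gap is exactly the step you flagged: (H4)(iii) does \emph{not} yield positive linear independence of the active gradients at boundary points. It constrains only points outside $C(t)$, and near a boundary point there need not be any outside point at which all the relevant $h_j$ are nonnegative.

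Take $n=2$ and the time-independent data $h_1(x)=x_2-\theta(x_1)$, $h_2(x)=-x_2-\theta(x_1)$, with $\theta(s)=\sqrt{1+s^2}-1$. A point $x\notin C$ with $h_1(x)\ge 0$ and $h_2(x)\ge 0$ would satisfy $\theta(x_1)\le x_2\le -\theta(x_1)$, which forces $x=0\in C$. So $I^+_{\eta_0}(x)$ is at most a singleton off $C$, (H4)(iii) holds with $\gamma_0=1$, and (H4)(i),(ii) are clear ($|\nabla h_j|\le\sqrt2$). Yet $\nabla h_1(0)=(0,1)=-\nabla h_2(0)$, so there is no $v$ with $\nabla h_j(0)\cdot v<0$ for $j=1,2$.

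This is why your limiting argument along $\bar x_0+\tau w$ cannot be completed. If $\sum_j\alpha_j\nabla_x h_j(\bar t,\bar x_0)=0$, asking $\nabla_x h_j\cdot w\ge 0$ on the support of $\alpha$ forces equalities, and the second-order terms can then make the $h_j$ negative, as here. Remark \ref{nc}(a) does not rescue you: its normal cone formula holds at this pinch point ($N_C(0)=\{0\}\times\R$), but it says nothing about pointedness. Worse, $h_1(x)=x$, $h_2(x)=-x$ on $\R$ satisfy (H4) with $\gamma_0=1$ and give $C=\{0\}$, which has no strictly feasible point at all. So the nonemptiness of $\operatorname{int}C(t)$ claimed in that remark also needs more than (H4). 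Any argument relying on strictly feasible points near $\bar x_0$, including your fallback, therefore cannot cover all cases allowed by (H4), even though the lemma itself is true in these examples.

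There are two ways to close the gap.

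\emph{Using (H5).} This hypothesis is in force in all the main results. It makes the Gram matrix $(\nabla_x h_k\cdot\nabla_x h_j)_{k,j\in I(\bar t,\bar x_0)}$ strictly diagonally dominant with positive diagonal (by (H4)(ii)), hence nonsingular. The active gradients are then linearly independent, you can solve $\nabla_x h_j(\bar t,\bar x_0)\cdot v=-1$ for all $j\in I(\bar t,\bar x_0)$, and the rest of your argument goes through verbatim.

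\emph{Under (H4) alone.} Use the quantitative outside estimate that (H4)(iii) is designed to produce. By Lemmas \ref{ingt} and \ref{eqn}, $\phi_t:=h(t,\cdot)^{1/3}$ is $C^1$ off $C(t)$, with $|\nabla\phi_t(y)|\ge c:=3^{-1/3}\gamma k_1$ whenever $y\notin C(t)$ and $h(t,y)\le\eta$. Given $x$ with $0<h(t,x)\le\eta$ and $c'\in(0,c)$, apply Ekeland's principle to $\phi_t\ge 0$ with $\epsilon=\phi_t(x)$ and $\lambda=\phi_t(x)/c'$. This gives $y$ with $|y-x|\le\phi_t(x)/c'$, $\phi_t(y)\le\phi_t(x)$, and $y$ minimizing $\phi_t+c'|\cdot-y|$. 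If $\phi_t(y)>0$, this would force $|\nabla\phi_t(y)|\le c'<c$, so in fact $y\in C(t)$. Hence $d_{C(t)}(x)\le c^{-1}h(t,x)^{1/3}$ whenever $h(t,x)\le\eta$, uniformly in $t$, and $\kappa_\varepsilon:=\min\{\eta,(c\varepsilon)^3\}$ works with no compactness at all. This error bound is also what supplies the inner semicontinuity of $C(\cdot)$ behind the continuity of $d_{C(\cdot)}(\cdot)$ that the paper's proof asserts.
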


\begin{proof}
From {\rm (H4) (i),(iii)} we deduce that the function $d_{C(\cdot)}(\cdot)[\bar{S}-\delta,\bar{T}+\delta] \times R\B \to \R$ is continuous.
The result follows directly from the compactness of $[\bar{S}-\bar \delta,\bar{T}+\bar \delta] \times R\B$.
\end{proof}

\begin{lem}\label{phm}
Suppose that {\rm (H4)} is satisfied. Take $R>0$. Then, there exists a constant $\Lambda>0$ such that
\[
d_{C_j(t)}(x) \le \Lambda\, h_j^+(t,x),
\]
for all $(t,x)\in[\bar{S}-\bar \delta,\bar{T}+\bar \delta]\times R\B$ and for all $j\in\{1,\dots,J\}$, .
\end{lem}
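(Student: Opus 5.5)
The plan is to split according to the size of $h_j^+(t,x)$. Points with $h_j^+(t,x)=0$ lie in $C_j(t)$, so $d_{C_j(t)}(x)=0$ and the inequality is trivial. Points where $h_j(t,x)$ is small and positive are handled by a local descent argument along $-\nabla_x h_j$. Points where $h_j(t,x)$ is bounded away from zero are handled by compactness. Throughout, $j$ is fixed; since $J$ is finite, we take the maximum of the constants obtained at the end.

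\emph{Near-boundary regime.} Fix $\eta_1\in(0,\eta_0]$ and consider $(t,x)$ with $0<h_j(t,x)\le\eta_1$. Here $j\in I^+_{\eta_0}(t,x)$. Applying (H4)(iii) with the single weight $\alpha_j=1$ gives
\[
|\nabla_x h_j(t,x)|\ge\gamma_0 .
\]
Strictly, (H4)(iii) sums over all of $I^+_{\eta_0}(t,x)$. Choosing $\alpha_k=0$ for $k\neq j$ is allowed, so the bound holds for $j$ alone, provided $x\notin C(t)$. Since $h_j(t,x)>0$, indeed $x\notin C(t)$. By continuity of $\nabla_x h_j$ (from $C^{1+}$) on the compact set $[\bar S-\bar\delta,\bar T+\bar\delta]\times (R+1)\B$, we get $|\nabla_x h_j|\ge\gamma_0/2$ on a neighbourhood of radius $r_0$. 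We then follow the normalized gradient flow
\[
\dot y=-\frac{\nabla_x h_j(t,y)}{|\nabla_x h_j(t,y)|},\qquad y(0)=x,
\]
with $t$ frozen. Along it, $\tfrac{d}{ds}h_j(t,y(s))=-|\nabla_x h_j(t,y(s))|\le-\gamma_0/2$, at least while $h_j(t,y(s))\ge0$. One subtlety: the lower bound on the gradient must persist along the path. The positive part of the path still has $0\le h_j\le\eta_0$ and $y\notin C_j(t)\supset C(t)$, so (H4)(iii) applies again at every such point. Also $|\nabla_x h_j|\le K_h$ there, which gives Lipschitz control of the flow. Hence $h_j$ reaches $0$ by arclength $s^*\le 2h_j(t,x)/\gamma_0$. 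Then $y(s^*)\in C_j(t)$, and
\[
d_{C_j(t)}(x)\le s^*\le (2/\gamma_0)\,h_j^+(t,x).
\]
If one prefers to avoid ODEs, the same conclusion follows from the standard metric-regularity (Lyusternik/descent) estimate for $h_j$, using the gradient lower bound.

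\emph{Far regime.} Consider $(t,x)$ with $h_j(t,x)\ge\eta_1$ and $|x|\le R$. Then
\[
d_{C_j(t)}(x)\le d_{C(t)}(x)\le M:=\sup\{d_{C(t)}(x): t\in[\bar S-\bar\delta,\bar T+\bar\delta],\ |x|\le R\}.
\]
The constant $M$ is finite because $d_{C(\cdot)}(\cdot)$ is continuous on the compact set, as noted in the proof of Lemma \ref{kap}. Since $C(t)$ is nonempty, the distance is finite. So $d_{C_j(t)}(x)\le (M/\eta_1)h_j^+(t,x)$, and $\Lambda:=\max\{2/\gamma_0,\,M/\eta_1\}$ works.

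\emph{Main obstacle.} The delicate step is keeping the gradient bound along the descent path. The fix is to check that every point of the path with $h_j>0$ still lies outside $C(t)$ and inside the layer $0\le h_j\le\eta_0$, where (H4)(iii) and the $K_h$ bound of (H4)(ii) are available. The path may leave $R\B$, but its length is at most $2\eta_0/\gamma_0$, so working on the enlarged ball $(R+2\eta_0/\gamma_0)\B$ suffices.
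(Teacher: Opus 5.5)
Your proof is correct, but it takes a different route from the paper's.

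\textbf{The paper's route.} It argues by contradiction and compactness. It takes $(t_i,x_i)\to(t,x)$ with $d_{C_j(t_i)}(x_i)>i\,h_j^+(t_i,x_i)$ and infers $h_j(t_i,x_i)\to0$, so $h_j(t,x)=0$. It then uses three ingredients: the prox-regularity of $C_j(t)\cap R\B$, the mean value theorem on the segment $[x_i,\pi_{C_j(t_i)}(x_i)]$, and the fact that the unit proximal normal at the projection is $\nabla_x h_j/|\nabla_x h_j|$. Together these give $h_j(t_i,x_i)/d_{C_j(t_i)}(x_i)\to|\nabla_x h_j(t,x)|$. Since this ratio must tend to $0$, (H4)(ii) is violated.

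\textbf{Your route.} You read off from (H4)(iii), with the single weight $\alpha_j=1$, a uniform bound $|\nabla_x h_j|\ge\gamma_0$ on the whole layer $\{0<h_j(t,\cdot)\le\eta_0\}$; all points there lie outside $C_j(t)\supseteq C(t)$. A unit-speed descent then gives the explicit estimate $d_{C_j(t)}(x)\le h_j^+(t,x)/\gamma_0$ in that layer, for every $x\in\R^n$. This needs no projections, no prox-regularity and no identification of normal cones. The far layer only needs boundedness of the distance on the compact set, which the paper also uses implicitly to get $h_j(t_i,x_i)\to0$.

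\textbf{What each buys.} Your argument is constructive and gives an $R$-independent constant near the boundary. The paper's key step relies only on the pointwise nondegeneracy (H4)(ii) on the zero level set. Your descent adapts to that setting too, via the compactness lower bound on $|\nabla_x h_j|$ near $\{h_j=0\}$ that you sketch.

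\textbf{Small corrections.}
Well-posedness of the normalized flow comes from the local Lipschitz continuity of $\nabla_x h_j$ together with the lower bound $\gamma_0$, not from the upper bound $K_h$. Peano existence would suffice anyway, since you only need some unit-speed path along which $h_j$ decreases at rate at least $\gamma_0$.
The $r_0$-neighbourhood and the enlarged ball are superfluous, because (H4)(ii)--(iii) hold for all $x\in\R^n$.
You need not cite the continuity of $d_{C(\cdot)}(\cdot)$ from Lemma~\ref{kap}. Fix $c\in C(t_0)$; continuity of $h_j(\cdot,c)$ and your near-layer estimate give $d_{C_j(t)}(c)\le\eta_0/\gamma_0$ for $t$ near $t_0$. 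A finite covering of $[\bar S-\bar\delta,\bar T+\bar\delta]$ then bounds $d_{C_j(t)}(x)$ uniformly for $|x|\le R$, which is all the far layer requires.
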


\begin{proof}
We prove the claim of the lemma by contradiction. Suppose that there exist $j\in\{1,\dots,J\}$, and, for all $i \in \N$, $(t_i,x_i) \in [\bar{S}-\bar\delta,\bar{T}+\bar\delta] \times  R\B$ such that 
\begin{equation}\label{est0}
    d_{C_j(t_i)}(x_i) > i\, h_j^+(t_i,x_i).
\end{equation}
By a standard compactness argument, we may extract a subsequence (we do not relabel) such that $(t_i,x_i)\to (t,x)\in [\bar{S}-\bar\delta,\bar{T}+\bar\delta] \times  R\B$ as $i \to +\infty$.
From \eqref{est0} we deduce
\[
h_j^+(t_i,x_i)=h_j(t_i,x_i) 
\longrightarrow 0,\quad \text{ as } i \to +\infty,
\]
and so $x\in \partial C_j(t)$. It follows, from continuity of the function $d_{C(\cdot)}(\cdot)$, that
$
d_{C_j(t_i)}(x_i)\to 0
$  as $i \to +\infty$.
{From now on, we assume $i$ to be large enough so that $d_{C_j(t_i)}(x_i)\le r$, such that the set $C_j(t) \cap R\B$ is uniformly $r$-prox-regular (this is an immediate consequence of (H4) and standard arguments, cf. \cite[Proposition 2.2]{BCG2025}), for all $t \in [\bar{S}-\bar\delta,\bar{T}+\bar\delta]$.}
By the mean value theorem, there exists a point
$
y_i \in [x_i,\ \pi_{C_j(t_i)}(x_i)]
$
such that
\[
h_j(t_i,x_i)-h_j(t_i,\pi_{C_j(t_i)}(x_i))
= \nabla_x h_j(t_i,y_i)\cdot (x_i-\pi_{C_j(t_i)}(x_i)).
\]
Therefore, dividing across by $d_{C_j(t_i)}(x_i)>0$, we obtain 
\begin{align*}
\frac{h_j(t_i,x_i)}{d_{C_j(t_i)}(x_i)}
&= \frac{h_j(t_i,x_i)-h_j(t_i,\pi_{C_j(t_i)}(x_i))}{d_{C_j(t_i)}(x_i)} \\
&= \nabla_x h_j(t_i,y_i)\cdot 
\frac{x_i-\pi_{C_j(t_i)}(x_i)}{|x_i-\pi_{C_j(t_i)}(x_i)|} \\
&= \nabla_x h_j(t_i,y_i)\cdot
\frac{\nabla_x h_j(t_i,\pi_{C_j(t_i)}(x_i))}{|\nabla_x h_j(t_i,\pi_{C_j(t_i)}(x_i))|}
\end{align*}
Passing to the limit as $i \to +\infty$, we find 
\[
\frac{h_j(t_i,x_i)}{d_{C_j(t_i)}(x_i)} \to |\nabla_x h_j(t,x)|.
\]
But from \eqref{est0} we deduce that
\[
\underset{i \to +\infty}{\lim} \frac{h_j(t_i,x_i)}{d_{C_j(t_i)}(x_i)} = 0,
\]
that yields $|\nabla_x h_j(t,x)|=0$, in contradiction of {\rm (H4) (ii)}.
\end{proof}

\subsection{A Sweeping Process Approximation}
For a given real number $\alpha >0$, we shall consider the following control system
\begin{equation*}
    (CS_\alpha)\; \left\{
    \begin{aligned}
        &\dot{x}(t) = f(t,x(t),u(t)) - \dfrac{1}{\alpha}\,\nabla_x  h(t,x(t)), \text{ a.e. } t \in [S,T], \\
        & u(t) \in U(t), \text{ a.e. } t \in [S,T], \\
        & |S-\bar S|+|T - \bar T|+ \|x_e -\bar x_e\|_{L^\infty([\bar{S}-\bar\delta,\bar{T}+\bar\delta],\R^n)}\leq \bar \delta.
    \end{aligned}
    \right.
\end{equation*}
Similarly as before, an admissible process for $(CS_\alpha)$ is a triple $([S,T],x,u)$, where $[S,T]$ is an interval, $x \in W^{1,1}([S,T],\R^n)$ and $u:[S,T] \to \R^m$ is a measurable function such that all the constraints of $(CS_\alpha)$ are satisfied.

\begin{lem}\label{cvg} 
Suppose that (H2), (H3)* and {\rm (H4)} are satisfied. 
There exists a constant $A>0$ such that the following holds. 
Take some sequences of real numbers $(\alpha_i)_{i \in \N}, (S_i)_{i \in \N}, (T_i)_{i \in \N}$, a sequence of points $(x_0^i)_{i \in \N}$ and a sequence of admissible controls $(u_i)_{i \in \N}$, such that
\[
\alpha_i\downarrow 0, 
\qquad |S_i-S|+ |T_i-T|+ |x_0^i - x_0| \to 0,
\qquad d_{\mathcal{E}}(u_i,u) \to 0,
\qquad \text{ as } i \to +\infty,
\]
\[
\qquad |x_0^i - \bar x_e (S_i)|+ |S_i- \bar S|+ |T_i- \bar T| \leq \bar \delta, 
\qquad x_0^i\in C(S_i),
\]
for some $S,T \in \R$, $S<T$, $x_0 \in \R^n$ and some control $u$. Then, 
\begin{itemize}
    \item[(i):] there exists $i_1\in\N$ such that, for all $i\ge i_1$, the control system $(CS_{\alpha_i})$ admits a unique solution $x_{\alpha_i}\in W^{1,1}([S_i,T_i];\R^n)$, with $x_{\alpha_i}(S_i)=x_0^i$, such that $([S_i,T_i],x_{\alpha_i},u_i)$ is an admissible process for $(CS_{\alpha_i})$, and $x_{\alpha_i}$ satisfies
    \begin{equation}\label{est1}
    h(t,x_{\alpha_i}(t))\le A\,\alpha_i^{3/2},\quad \text{for all } t\in[S_i,T_i], \; \text{ and }
    \end{equation}
    \item[(ii):] the sequence $((x_{\alpha_i})_e)_{i \in \N}$ converges in $L^\infty$ and weakly in $W^{1,2}$
    as $i\to+\infty$ to a function $\tilde x$, which is the extension of an arc 
    $x\in W^{1,\infty}([S,T];\R^n)$ that is the unique solution in
    $W^{1,1}([S,T];\R^n)$ to the sweeping process
    \[
    (SP1) \quad
    \begin{cases}
    \dot{x}(t)=f(t,x(t),u(t)) - N_{C(t)}(x(t))
    & \text{a.e. } t\in[S,T],\\
    x(S)=x_0 \in C(S).
    \end{cases}
    \]
\end{itemize}
\end{lem}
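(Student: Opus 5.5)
For part (i), existence and uniqueness of $x_{\alpha_i}$ on $[S_i,T_i]$ with $x_{\alpha_i}(S_i)=x_0^i$ follow from Carathéodory theory. Under (H3)* the right side $f(t,x,u_i(t))-\alpha_i^{-1}\nabla_x h(t,x)$ is measurable in $t$ and locally Lipschitz in $x$, because $h$ is $C^{1+}$: $h_j^+(\cdot)^3$ has locally Lipschitz gradient $h_j^{+2}\nabla_x h_j$. Uniqueness is then immediate. Global existence on $[S_i,T_i]$, without blow-up, will come from the a priori bound \eqref{est1}. Combined with Lemma \ref{kap} and the $L^\infty$ closeness, that bound keeps the trajectory in a compact set.

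The core of (i) is \eqref{est1}, and I would prove it by a barrier argument on $\phi(t):=h(t,x_{\alpha_i}(t))$. Differentiating gives
\[
\dot\phi(t)=\nabla_t h+\nabla_x h\cdot f-\alpha_i^{-1}|\nabla_x h|^2 .
\]
Since $h_j^+\le (3\phi)^{1/3}$, the bound $K_h$ in (H4)(ii) gives $|\nabla_t h|+|\nabla_x h\cdot f|\le c\sum_j h_j^{+2}$ for some constant $c$. Lemma \ref{ingt} gives $|\nabla_x h|^2\ge \gamma^2(\sum_j h_j^{+2})^2$. Writing $s:=\sum_j h_j^{+2}$, we obtain $\dot\phi\le s(c-\alpha_i^{-1}\gamma^2 s)$, which is negative as soon as $s>c\alpha_i/\gamma^2$. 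By Lemma \ref{eqn}, $s\ge k_1(3\phi)^{2/3}$. Hence $\dot\phi<0$ whenever $\phi> A\alpha_i^{3/2}$ with $A:=3^{-1}(c/(k_1\gamma^2))^{3/2}$. Since $\phi(S_i)=0$ (because $x_0^i\in C(S_i)$), $\phi$ can never cross the level $A\alpha_i^{3/2}$, and this gives \eqref{est1}. One also has to check that $\phi\le\eta$ stays in the region where Lemma \ref{ingt} applies, which holds for $i$ large. I expect this step to be the main obstacle. The delicate points are making the bound on $\nabla_t h$ uniform, which needs a bound on $|\nabla_t h_j|$ for $j\in I^+_{\eta_0}$ from (H4)(ii), and running the continuation argument while the trajectory stays in the tube.

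For (ii), I would first obtain a uniform $L^2$ bound on the derivatives. Integrating $\dot\phi$ gives
\[
\alpha_i^{-1}\int |\nabla_x h|^2\,dt\le \phi(S_i)-\phi(T_i)+\int (|\nabla_t h|+|\nabla_x h|\,c_f)\,dt .
\]
The last integral is $O(\alpha_i)$ by \eqref{est1}, so $\alpha_i^{-2}\int|\nabla_x h|^2\le C$. Together with $|f|\le c_f$, this bounds $\|\dot x_{\alpha_i}\|_{L^2}$ uniformly. By Arzelà–Ascoli and weak compactness, a subsequence of the extensions converges uniformly and weakly in $W^{1,2}$ to some $\tilde x$, which is the extension of an arc $x$ on $[S,T]$ with $x(S)=x_0$. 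By \eqref{est1} and Lemma \ref{kap}, $x(t)\in C(t)$ for all $t$.

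To identify $x$ as a solution of (SP1), write $\alpha_i^{-1}\nabla_x h=\sum_j \eta^i_j\nabla_x h_j$ with $\eta^i_j:=\alpha_i^{-1}h_j^{+2}\ge0$. By (H5)*, $|\sum_j\eta_j^i\nabla h_j|\gtrsim (1-\rho)^{1/2}(\sum_j(\eta^i_j)^2|\nabla h_j|^2)^{1/2}$, so the multipliers $\eta^i_j$ are bounded in $L^2$. Extracting weak limits $\eta_j$, and using $d_{\mathcal E}(u_i,u)\to0$ together with Lipschitz continuity of $f$ so that $f(\cdot,x_{\alpha_i},u_i)\to f(\cdot,x,u)$ in $L^1$, we get
\[
\dot x=f(t,x,u)-\sum_j\eta_j\nabla_x h_j(t,x).
\]
Moreover $\eta_j$ vanishes off $\{h_j(t,x(t))=0\}$, since there $\eta^i_j=0$ for large $i$ by uniform convergence. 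By the normal cone formula in Remark \ref{nc}(a), this is exactly (SP1). Uniqueness for (SP1) follows from uniform prox-regularity of $C(t)\cap R\B$ (hypomonotonicity of the normal cone, as in \cite[Prop.\ 2.2]{BCG2025}) plus a Gronwall estimate; this makes the whole sequence converge. Finally, the $L^\infty$ bound on $\dot x$ comes from a pointwise bound on the $\eta_j$. I would obtain it by repeating the barrier computation: at points where $x\in\partial C$, $\sum\eta_j\nabla h_j$ is controlled by the normal component of $f$ together with $\nabla_t h_j$, using (H4)(iii) and (H5).
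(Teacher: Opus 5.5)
Your part (i) is the paper's argument: the same barrier estimate on $h(t,x_{\alpha_i}(t))$ via Lemmas \ref{ingt} and \ref{eqn}. The constant $K_h$ in (H4)(ii) bounds the full gradient $\nabla h_j$, so $\nabla_t h$ is covered, as the paper uses.

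Part (ii) differs from the paper in two ways.

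\emph{Compactness and the Lipschitz bound.} You overlooked that \eqref{est1} already gives $h_j^+(t,x_{\alpha_i}(t))^2\le (3A)^{2/3}\alpha_i$ pointwise. Hence $0\le \alpha_i^{-1}h_j^+(t,x_{\alpha_i}(t))^2\le (3A)^{2/3}$ and $|\dot x_{\alpha_i}|\le c_f+J(3A)^{2/3}K_h$ uniformly. Indeed, the pointwise estimate $|\nabla h|=O(\alpha_i)$ you use to make the last integral $O(\alpha_i)$ is exactly this. The paper uses it to get a uniform $W^{1,\infty}$ bound at once. That makes three of your steps superfluous: the energy integration of $\dot\phi$, the (H5)$^*$ argument for $L^2$ bounds on the multipliers, and your final, only sketched, argument for $\dot x\in L^\infty$. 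The weak limits automatically satisfy $\eta_j\in[0,(3A)^{2/3}]$ a.e. Your level-set idea would work: differentiate $h_j(t,x(t))$ on its zero set and invert the Gramian of the active gradients, which is strictly diagonally dominant by (H5). But it is a detour.

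\emph{Identification of the limit.} Here your route is genuinely different. The paper expands $(f-\dot x_{\alpha_i})\cdot(y-x_{\alpha_i})$ through the projections onto $C_j(t)$. Using uniform prox-regularity of $C_j(t)\cap R\B$ and $C(t)\cap R\B$, together with Lemmas \ref{kap} and \ref{phm}, it bounds this by a quantity tending to $\frac{A_1K_h}{2r}|y-x(t)|^2$, and passes to the weak limit to get a proximal-normal inequality. You instead pass to the limit in $\dot x_{\alpha_i}=f-\sum_j\eta^i_j\nabla_x h_j$. This gives $\dot x=f-\sum_j\eta_j\nabla_x h_j(t,x)$, with $\eta_j$ vanishing off $\{h_j(t,x(t))=0\}$.

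Your route needs neither Lemma \ref{phm} nor prox-regularity for existence; prox-regularity enters only for uniqueness. It also yields at once the multiplier representation that the paper extracts later, by the same weak-limit argument, for condition (ii) of Theorem \ref{pm}. The paper's route stays with the normal cone itself and produces a uniform proximal-normal constant directly.

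One refinement: you only need the easy inclusion $\sum_{j\in I(t,x)}\eta_j\nabla_x h_j(t,x)\in N_{C(t)}(x)$. This holds everywhere, because each such $\nabla_x h_j(t,x)$ is a proximal normal to $C_j(t)$ by the $C^{1+}$ regularity, and sums of these are proximal normals to the intersection. So you need not invoke Remark \ref{nc}(a), which is stated only on the $\bar\delta$-tube around $\bar x_e$.
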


\begin{proof}
{\bf (i).} The conditions on the sequences $(x_0^i)_{i \in \N}$ and $(u_i)_{i \in \N}$ ensure that $x_0 \in C(S)$ and that $u$ is an admissible control. 
Consider the constants $K_h$, $c_f$, $\eta$, $\gamma$, $k_1$ and $k_2$ provided by assumptions (H3)* and (H4) and Lemmas \ref{ingt} and \ref{eqn}. Set
\[
M_0:=3^{2/3} k_2 K_{h}(1+c_f) \quad \text{ and } \quad A_0:= \frac{4M_0}{(3^{2/3}{\gamma} k_1)^2}.
\]
For all $i \in \N$, there exists a unique maximal solution $x_{\alpha_i}$ to
\[
\dot{x}(t) = f(t,x(t),u_i(t)) - \dfrac{1}{\alpha_i}\,\nabla_x {h}(t,x(t)), \text{ a.e. } t \in [S_i,T_i],
\]
such that $x(S_i) = x_0^i$ (this solution is not necessarily defined on the whole interval).
Since ${h}(S_i,x_{\alpha_i}(S_i)) \leq 0$, there exists $i_1 \in \N$, such that 
$\alpha_{i_1} \leq \frac{{\eta}^{2/3}  }{A_0}$
and, for all $i \geq i_1$, we may define
\[
\tau_i := \sup \left\{ t \in [S_i, T_i] :\; \forall s \in [S_i,t], \, h(s, x_{\alpha_i}(s))^{2/3} \leq A_0 {\alpha_i} \right\},
\]
note that $\tau_i > S_i$. Take $i \geq i_1$ and assume for the sake of contradiction that $\tau_i < T_i$.
Then, by continuity, ${h}(\tau_i,x_{\alpha_i}(\tau_i))^{2/3} = A_0{\alpha_i}$ and there exist $\nu >0$ such that, for all $t \in [\tau_i-\nu,\tau_i]$, ${h}(t,x_{\alpha_i}(t))^{2/3} \geq \frac{A_0{\alpha_i}}{2}$. But, for a.e. $t \in [\tau_i-\nu,\tau_i]$,
\begin{align*}
    \frac{d}{dt} ({h}(t,x_{\alpha_i}(t)))
    =& \nabla_t {h}(t,x_{\alpha_i}(t)) + \nabla_x {h}(t,x_{\alpha_i}(t)) \cdot \dot{x}_{\alpha_i}(t) \\
    =& \nabla_t {h}(t,x_{\alpha_i}(t)) - \frac{1}{{\alpha_i}} |\nabla_x {h}(t,x_{\alpha_i}(t))|^2 + \nabla_x {h}(t,x_{\alpha_i}(t)) \cdot f(t,x_{\alpha_i}(t),u_i(t)).\\
\end{align*}
From Lemmas \ref{ingt} and \ref{eqn} (and bearing in mind (H3)*) we obtain 
\begin{align*}
    \frac{d}{dt} ({h}(t,x_{\alpha_i}(t))) \leq & 3^{2/3} k_2 K_{h}(1+c_f) {h}(t, x_{\alpha_i}(t))^{2/3} - \frac{(3^{2/3}{\gamma} k_1)^2}{{\alpha_i}}{h}(t, x_{\alpha_i}(t))^{4/3} \\
    \leq & {h}(t, x_{\alpha_i}(t))^{2/3}\left(M_0 - \frac{(3^{2/3}{\gamma} k_1)^2}{{\alpha_i}}{h}(t, x_{\alpha_i}(t))^{2/3}\right) \\
    \leq & A_0 {\alpha_i} \left(M_0 - {(3^{2/3}{\gamma} k_1)^2}\frac{A_0}{2}\right) \\
    = & -A_0 M_0 {\alpha_i} \\
    < & 0, \qquad \text{ a.e. } t \in [\tau_i-\nu,\tau_i].
\end{align*}
Hence ${h}(\tau_i,x_{\alpha_i}(\tau_i)) < {h}(\tau_i-\nu,x_{\alpha_i}(\tau_i-\nu)) \leq (A_0 {\alpha_i})^{3/2}$. A contradiction. 
It follows that $\tau_i = T_i$, which proves property (i) of the lemma statement, with $A:=A_0^{3/2}$. 

\ \\

\noindent
{\bf (ii).} We now study the convergence of $((x_{\alpha_i})_e)_{i \in \N}$.
Write $A_1:=3^{2/3}k_2A_0$.  
We obtain
\[
    |\dot{x}_{\alpha_i}(t)| \leq c_f+ A_1K_{h}, \quad \text{a.e. } \; t \in [S_i,T_i],
\]
for all $i\ge i_1$. 
So, the sequence $((x_{\alpha_i})_e)_{i \in \N}$ is uniformly bounded in $W^{1,\infty}$ and also in $L^\infty$ say by a constant $R>0$. It follows from the Ascoli-Arzel\`a theorem that a subsequence of $((x_{\alpha_i})_e)_{i \in \N}$ (we do not relabel) converges uniformly to a function $\tilde x$. Moreover, the Banach-Alaoglu theorem guarantees that a subsequence of $((x_{\alpha_i})_e)_{i \in \N}$ weakly converges to a function in $W^{1,2}$. Thus $\tilde x$ is the weak limit in $W^{1,2}$ of a sequence uniformly bounded in $W^{1,\infty}$, and therefore $\tilde x \in W^{1,\infty}$ (with the same bound). It is easy to see that $\tilde x$ is constant outside $[S,T]$, so it is the extension of a function $x \in W^{1,{\infty}}([S, T];\R^n)$. 
Since for all $t \in [S,T]$, ${h}(t, x(t))=0$ we have that $x(t) \in C(t)$. We shall now prove that $x$ is the unique solution to $(SP1)$ on $[S,T]$.

As a consequence of (H4) and standard arguments (cf. \cite[Proposition 2.2]{BCG2025}) we also know that 
the sets $C(t)\cap R\B$ and $C_j(t) \cap R\B$, for $j=1,\dots,J$, are uniformly $r$-prox-regular for some $r>0$, for all $t \in [\bar{S}-\bar\delta,\bar{T}+\bar\delta]$. In view of \eqref{est1} and Lemmas \ref{kap} and \ref{phm} there exists $i_2\ge i_1$ such that $d_{C_j(t)}(x_{\alpha_i}(t))\le r$ and $d_{C(t)}(x_{\alpha_i}(t))\le r$ for all $t\in [S_i,T_i]$, for all $i\ge i_2$. 

\noindent
To simplify notation, the explicit time dependence of functions is often omitted in the following. Fix any $i\geq i_2$. For a.e.\ $t \in [S_i,T_i]$, and for all $y\in C(t)$ we have 
\allowdisplaybreaks[2]
\begin{align*}
    (f(t,  x_{\alpha_i}, u_i) - \dot{x}_{\alpha_i})& \cdot (y - x_{\alpha_i}) 
    = \frac{1}{\alpha_i} \underset{j=1}{\overset{J}{\sum}} h^+_j(t,x_{\alpha_i})^2 \, \nabla_x h_j(t,x_{\alpha_i}) \cdot (y - x_{\alpha_i}) \\
    &= \frac{1}{\alpha_i} \underset{j=1}{\overset{J}{\sum}} h^+_j(t,x_{\alpha_i})^2 \, \nabla_x h_j(t,x_{\alpha_i}) \cdot (y - \pi_{C_j(t)}(x_{\alpha_i})) \\
    &\quad +\frac{1}{\alpha_i} \underset{j=1}{\overset{J}{\sum}} h^+_j(t,x_{\alpha_i}(t))^2 \, \nabla_x h_j(t,x_{\alpha_i}(t)) \cdot (\pi_{C_j(t)}(x_{\alpha_i} ) - x_{\alpha_i} ) \\
    &= \frac{1}{\alpha_i} \underset{j=1}{\overset{J}{\sum}} h^+_j(t,x_{\alpha_i} )^2 \, \nabla_x h_j(t,\pi_{C_j(t)}(x_{\alpha_i} )) \cdot (y - \pi_{C_j(t)}(x_{\alpha_i} )) \\
    &\quad +\frac{1}{\alpha_i} \underset{j=1}{\overset{J}{\sum}} h^+_j(t,x_{\alpha_i} )^2 \, \left(\nabla_x h_j(t,x_{\alpha_i} )-\nabla_x h_j(t,\pi_{C_j(t)}(x_{\alpha_i} ))\right) \cdot (y - \pi_{C_j(t)}(x_{\alpha_i} )) \\
    &\quad +\frac{1}{\alpha_i} \underset{j=1}{\overset{J}{\sum}} h^+_j(t,x_{\alpha_i} )^2 \, \nabla_x h_j(t,x_{\alpha_i} ) \cdot (\pi_{C_j(t)}(x_{\alpha_i} ) - x_{\alpha_i} ). \\
\end{align*}
From the uniform $r$-prox-regularity of the sets $C(t)\cap R\B$ and $C_j(t)\cap R\B$, for $j=1, \dots,J$, we deduce that (for all $i\geq i_2$)
\begin{align*}
    (f(t, x_{\alpha_i}, u_i) - \dot{x}_{\alpha_i}) \cdot (y - x_{\alpha_i}) 
    &\leq A_1 \frac{K_{h}}{2r} |y - \pi_{C(t)}(x_{\alpha_i})|^2 \\
    &\quad +A_1 \,\underset{j=1}{\overset{J}{\sum}} \left|\nabla_x h_j(t,x_{\alpha_i})-\nabla_x h_j(t,\pi_{C_j(t)}(x_{\alpha_i}))\right| |y - \pi_{C_j(t)}(x_{\alpha_i})| \\
    &\quad +A_1 \,\underset{j=1}{\overset{J}{\sum}} \left|\nabla_x h_j(t,x_{\alpha_i})\right||\pi_{C_j(t)}(x_{\alpha_i}) - x_{\alpha_i}|. 
\end{align*}
The term on the right side converges to 
$
\frac{A_1K_h}{2r} |y - x(t)|^2 
$
as $i \to +\infty$.
The term on the left side is bounded by $A_1K_h(|y|+R)$, so it converges weakly (extracting a subsequence if necessary) in $L^2$ to $(f(., x(.), u(.)) - \dot{x}(.)) \cdot (y - x(.))$ as $i \to +\infty$.
It follows that 
\[
(f(t, x(t), u(t)) - \dot{x}) \cdot (y - x(t)) \leq \frac{A_1 K_{h}}{2r}|y - x(t)|^2, \text{ for all } y\in C(t), \text{ a.e. } t \in [S,T], 
\]
hence $(f(t, x(t), u(t)) - \dot{x}(t)) \in N_{C(t)}(x(t))$, and, so, $x$ is a solution to $(SP1)$. 
Invoking the uniform $r$-prox-regularity of $C(t)$, we can easily derive the uniqueness of the solution to $(SP1)$. Consequently, the limit does not depend on the choice of the subsequence of $((x_{\alpha_i})_e)_{i \in \N}$. This proves property~(ii) of Lemma~\ref{cvg}.

\end{proof}

\begin{rem}
Lemma~\ref{cvg} implies that, under assumptions {\rm (H2)}--{\rm (H5)}, the reference minimizer arc $\bar x$ of problem $(P)$ belongs to $W^{1,\infty}$.
\end{rem}

\begin{lem}\label{stg}
Suppose that (H2), (H3)* and {\rm (H4)} are satisfied. 
There exists $K>0$ such that the following holds. 
Let $(\alpha_i)_{i\in\N}$ be a decreasing sequence converging to $0$.  
Suppose that for each $i\in\N$ there exist processes
$([S_i,T_i],x_{\alpha_i},u_i)$ and $([S_i',T_i'],x_{\alpha_i}',u_i')$ for the problem $(CS_{\alpha_i})$ both verifying the conditions of Lemma \ref{cvg} so they approximate respectively some process $([S,T],x,u)$ and some process $([S',T'],x',u')$ for $(P)$.
Then there exists $i_0 \in \N$, such that, for all $i,j \geq i_0$,
\[
\|(x_{\alpha_i})_e-(x_{\alpha_j}')_e\|_{L^\infty(\bar S-\bar \delta,\bar T+ \bar \delta)}
\le
K\Big(
\alpha_i^{1/4}+\alpha_j^{1/4}
+|S_i-S_i'|+|T_i-T_i'|
+|x_{\alpha_i}(S_i)-x_{\alpha_j}'(S_j')|
+\sqrt{d_{\mathcal E}(u_i,u_j')}
\Big).
\]
Moreover, if for all $i\in\N$ one has a process $([\bar S, \bar T],x_{\alpha_i},\bar u)$ for $(CS_{\alpha_i})$, then
\[
\|x_{\alpha_i}-\bar x\|_{L^\infty(\bar S,\bar T)} \le K\,\alpha_i^{1/4}.
\]
\end{lem}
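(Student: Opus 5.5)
The plan is to compare the two penalized trajectories directly through an energy estimate on $\phi(t):=\tfrac12|x_{\alpha_i}(t)-x'_{\alpha_j}(t)|^2$ on the common interval $[S_i\vee S_j',\,T_i\wedge T_j']$. Outside this interval the extensions are constant on one side and Lipschitz on the other. By Lemma \ref{cvg}(ii) the velocities are uniformly bounded by $c_f+A_1K_h$, so these end effects contribute at most a constant multiple of $|S_i-S_j'|+|T_i-T_j'|$ to the $L^\infty$ distance. We start the comparison at $\sigma:=S_i\vee S_j'$. There the initial discrepancy is bounded by $|x_{\alpha_i}(S_i)-x'_{\alpha_j}(S_j')|$ plus, again by the velocity bound, a multiple of $|S_i-S_j'|$. (The statement writes $S_i'$ in one place and $S_j'$ in another; I read the intended index as $j$ throughout.)

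On the common interval,
\[
\dot\phi=(x_{\alpha_i}-x'_{\alpha_j})\cdot\big(f(t,x_{\alpha_i},u_i)-f(t,x'_{\alpha_j},u_j')\big)-(x_{\alpha_i}-x'_{\alpha_j})\cdot\Big(\tfrac1{\alpha_i}\nabla_xh(t,x_{\alpha_i})-\tfrac1{\alpha_j}\nabla_xh(t,x'_{\alpha_j})\Big).
\]
For the first term, split according to whether $u_i=u_j'$. Where they agree, (H3)* gives $k_f|x_{\alpha_i}-x'_{\alpha_j}|^2$. Where they differ, we get at most $2c_f\cdot 2R$, and this set has measure $d_{\mathcal E}(u_i,u_j')$. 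After integration it yields the $d_{\mathcal E}$ term, which becomes a square root once we pass from $\phi$ to the distance.

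The key step is the penalty term. Write $\zeta_i:=-\tfrac1{\alpha_i}\nabla_xh(t,x_{\alpha_i})=-\tfrac1{\alpha_i}\sum_k h_k^+(t,x_{\alpha_i})^2\nabla_xh_k(t,x_{\alpha_i})$. By \eqref{est1} and Lemma \ref{eqn}, $\frac1{\alpha_i}h_k^+(t,x_{\alpha_i})^2\le A_1$, so $|\zeta_i|$ is uniformly bounded. Moreover, as in the proof of Lemma \ref{cvg}(ii), $\zeta_i$ acts like a proximal normal to $C(t)$ at $x_{\alpha_i}$ up to an error. For the projection point $\pi_{C_k(t)}(x_{\alpha_i})$ we use Lemma \ref{phm}:
\[
|x_{\alpha_i}-\pi_{C_k(t)}(x_{\alpha_i})|\le\Lambda h_k^+(t,x_{\alpha_i})\le\Lambda(A_1\alpha_i)^{1/2}.
\]
The prox-regularity inequality (uniform $r$-prox-regularity of $C_k(t)\cap R\B$) then yields, for any $y\in C(t)$,
\[
\zeta_i\cdot(y-x_{\alpha_i})\le \tfrac{A_1K_h}{2r}|y-x_{\alpha_i}|^2+c\,\alpha_i^{1/2}.
\]
The remainder comes from replacing $x_{\alpha_i}$ by its projections and uses the Lipschitz continuity of $\nabla_xh_k$. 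Applying this with $y=\pi_{C(t)}(x'_{\alpha_j})$, which lies within $O(\alpha_j^{1/2})$ of $x'_{\alpha_j}$ by Lemma \ref{phm} and (H4)(iii), and symmetrically with the roles of $i$ and $j$ swapped, gives a hypomonotonicity estimate
\[
-(x_{\alpha_i}-x'_{\alpha_j})\cdot\big(\tfrac1{\alpha_i}\nabla_xh(t,x_{\alpha_i})-\tfrac1{\alpha_j}\nabla_xh(t,x'_{\alpha_j})\big)\le c_1\phi+c_2(\alpha_i^{1/2}+\alpha_j^{1/2}).
\]
This requires passing from distance to $C_k$ to distance to $C$; I would do that by summing over $k$ and using the weak-orthogonality (H5)*, which controls the cross terms $\nabla_xh_k\cdot\nabla_xh_l$. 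That passage is the main technical obstacle.

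Combining the two estimates and integrating gives
\[
\phi(t)\le\phi(\sigma)+c\!\int k_f\phi+c\big(\alpha_i^{1/2}+\alpha_j^{1/2}+d_{\mathcal E}(u_i,u_j')\big).
\]
Gronwall's lemma then bounds $\phi$, and taking square roots turns $\alpha^{1/2}$ into $\alpha^{1/4}$ and $d_{\mathcal E}$ into $\sqrt{d_{\mathcal E}}$. Together with the end-interval contributions this gives the first assertion, with $i_0\ge i_2$ chosen so that Lemma \ref{cvg} applies to both sequences.

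For the second assertion, repeat the same computation with $x'$ replaced by $\bar x$ itself on $[\bar S,\bar T]$, using control $\bar u$ and $x_{\alpha_i}(\bar S)=\bar x(\bar S)$. Here $\bar x\in C(t)$ and $-(\dot{\bar x}-f)\in N_{C(t)}(\bar x)$ is bounded since $\bar x\in W^{1,\infty}$. The proximal normal inequality for $\bar x$ replaces the penalized one with no $\alpha_j$ error, and the initial and control terms vanish. Gronwall then gives $\|x_{\alpha_i}-\bar x\|_{L^\infty}\le K\alpha_i^{1/4}$.
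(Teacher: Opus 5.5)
Your overall architecture matches the paper's: Gronwall on $\tfrac12|x_{\alpha_i}-x'_{\alpha_j}|^2$ over the common interval, a Lipschitz/$d_{\mathcal E}$ splitting of the $f$-term, $O(\sqrt{\alpha})$ errors from Lemma~\ref{phm}, square roots at the end, and velocity bounds for the end pieces. The penalty step, however, has a genuine gap, exactly where you locate the ``main technical obstacle''.

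\textbf{The gap in the penalty step.} By testing against a single point $y=\pi_{C(t)}(x'_{\alpha_j})$ of the intersection, you need $d_{C(t)}(x'_{\alpha_j})=O(\alpha_j^{1/2})$. That is a linear error bound $d_{C(t)}(x)\lesssim\max_k h_k^+(t,x)$ for the intersection, and nothing available gives it:
\begin{itemize}
\item Lemma~\ref{phm} only controls $d_{C_k(t)}$ for each $k$ separately.
\item Lemma~\ref{kap} is purely qualitative.
\item (H5)* is not among the hypotheses of this lemma, and you do not say how it would yield the bound.
\item Such a bound can be extracted from (H4)(iii) only through an additional decrease-principle argument.
\end{itemize}
There is also a sign slip: the proximal normal inequality holds for $-\zeta_i=\tfrac1{\alpha_i}\nabla_x h$, not for $\zeta_i$. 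Your subsequent hypomonotonicity estimate does have the correct sign.

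\textbf{How to close it.} The obstacle is self-inflicted. The penalty is a sum over $k$, and for the $k$-th term the proximal normal inequality for $C_k(t)$ only requires a test point in $C_k(t)$. Take $y_k:=\pi_{C_k(t)}(x'_{\alpha_j}(t))$ for each $k$ separately. Lemma~\ref{phm} then gives $|x'_{\alpha_j}-y_k|\le\Lambda(3A)^{1/3}\alpha_j^{1/2}$ directly, and no passage to $C(t)$ and no (H5) is needed.

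The paper does this more economically. It uses the enlarged sublevel sets $C_k^i(t)=\{x: h_k(t,x)\le h_k^+(t,x_{\alpha_i}(t))\}$, on whose boundary $x_{\alpha_i}(t)$ lies. Hence $\nabla_x h_k(t,x_{\alpha_i}(t))$ is an exact proximal normal at $x_{\alpha_i}(t)$, with no projection of $x_{\alpha_i}$ and no Lipschitz-gradient error. It then projects $x'_{\alpha_j}(t)$ onto $C_k^i(t)$ and needs only two facts: $d_{C_k^i(t)}\le d_{C_k(t)}$, and $|\pi_{C_k^i(t)}(x'_{\alpha_j})-x_{\alpha_i}|\le 2|x_{\alpha_i}-x'_{\alpha_j}|$, since $x_{\alpha_i}(t)\in C_k^i(t)$.

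\textbf{The second assertion.} Your direct proof has the same issue. The proximal normal inequality for $\nu(t)=f(t,\bar x,\bar u)-\dot{\bar x}\in N_{C(t)}(\bar x(t))$ cannot be applied at the point $x_{\alpha_i}(t)$, which generally lies outside $C(t)$. This produces an error of order $|\nu|\,d_{C(t)}(x_{\alpha_i})$ that you did not account for. Your claim of ``no error'' is therefore inaccurate. Controlling that term again needs the intersection error bound, or a decomposition of $\nu$ along active gradients via Remark~\ref{nc}(a), treated constraint by constraint.

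A simpler route is to apply the first estimate with $x'_{\alpha_j}:=x_{\alpha_j}$, using the same control $\bar u$ and the same initial point $\bar x(\bar S)$. This gives $\|x_{\alpha_i}-x_{\alpha_j}\|_{L^\infty}\le K(\alpha_i^{1/4}+\alpha_j^{1/4})$. Now let $j\to\infty$: by Lemma~\ref{cvg}(ii) and uniqueness for (SP1), $x_{\alpha_j}\to\bar x$ uniformly, which yields $\|x_{\alpha_i}-\bar x\|_{L^\infty}\le K\alpha_i^{1/4}$.
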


\begin{proof}
Denote for all $k \in \{1,\dots,J\}$, all $i\in \N$, all $t\in [S_i,T_i]$ and all $t'\in [S_i',T_i']$,
\[
C_k^{i}(t) := \left\{x \in \R^n :\, h_k(t,x) \leq h^+_k(t,x_{\alpha_i}(t))\right\}, \qquad 
\Tilde{C}_k^{i}(t') := \left\{x \in \R^n :\, h_k(t',x) \leq h^+_k(t',x_{\alpha_i}'(t'))\right\},
\]
and for all $k \in \{1,\dots,J\}$, all $i,j\in \N$ and all $t\in [S_i \vee S_j',T_i\wedge T_j']$,
\[
\pi_k^{i,j}(t):= \pi_{C_k^{i}(t)}(x_{\alpha_j}'(t)), \qquad 
\Tilde{\pi}_k^{j,i}(t):= \pi_{\Tilde{C}_k^{j}(t)}(x_{\alpha_i}(t)),
\]
note that $C_k(t) \subset C_k^{i}(t) \ni x_{\alpha_i}(t)$ and $C_k(t) \subset \Tilde{C}_k^{j}(t) \ni x_{\alpha_j}'(t)$.

According to Lemma \ref{cvg}, there exist $A>0$ (not depending on the sequences of approximating processes) and $i_0 \in \N$ such that, for all $i \geq i_0$, 
    \[
    \sum_{k=1}^{J} h_k(t,x_{\alpha_i}(t))^3 \leq 3A\alpha_i^{3/2}, \qquad \sum_{k=1}^{J} h_k(t,x_{\alpha_i}'(t))^3 \leq 3A{\alpha_i}^{3/2},
    \]
for all $ t\in [S_i \vee S_i',T_i\wedge T_i']$, and such that
    \[
    \|(x_{\alpha_i})_e - x_e\|_{L^\infty} \leq r', \qquad\|(x_{\alpha_i}')_e -  x'_e\|_{L^\infty} \leq r',
    \]
where $r' \in \left(0,r\wedge \frac{\bar \delta}{4}\right)$ is such that $C_k^{i}(t)$ and $\Tilde{C}_k^{i}(t) $ are $r'$-prox-regular.
Hence, the projections defined above are singletons.

Let $i,j \geq i_0$ and consider the function $z_{i,j}(t):=  \frac{1}{2}|x_{\alpha_i}(t) -x_{\alpha_j}'(t)|^2$. For all $t\in [S_i \vee S_j',T_i\wedge T_j']$,
\begin{align*}
    \dot{z}_{i,j}(t) 
    &= (\dot{x}_{\alpha_i}(t) -\dot{x}_{\alpha_j}'(t))\cdot(x_{\alpha_i}(t) -x_{\alpha_j}'(t)) \\
    &= (f(t,x_{\alpha_i}(t),u_i(t)) -f(t,x_{\alpha_j}'(t),u_j'(t)))\cdot(x_{\alpha_i}(t) -x_{\alpha_j}'(t)) \\
    & \quad - \sum_{k=1}^J \left( \frac{h^+_k(t,x_{\alpha_i}(t))^2}{\alpha_i}\nabla_x h_k(t,x_{\alpha_i}(t)) - \frac{h^+_k(t,x_{\alpha_j}'(t))^2}{\alpha_j}\nabla_x h_k(t,x_{\alpha_j}'(t))\right)\cdot(x_{\alpha_i}(t) -x_{\alpha_j}'(t)) \\
    &= (f(t,x_{\alpha_i}(t),u_i(t)) -f(t,x_{\alpha_j}'(t),u_i(t)))\cdot(x_{\alpha_i}(t) -x_{\alpha_j}'(t)) \\ 
    & \quad +(f(t,x_{\alpha_j}'(t),u_i(t)) -f(t,x_{\alpha_j}'(t),u_j'(t)))\cdot(x_{\alpha_i}(t) -x_{\alpha_j}'(t)) \\
    & \quad + \sum_{k=1}^J \frac{h^+_k(t,x_{\alpha_i}(t))^2}{\alpha_i}\nabla_x h_k(t,x_{\alpha_i}(t))\cdot( x_{\alpha_j}'(t) - x_{\alpha_i}(t)) \\
    & \quad + \sum_{k=1}^J \frac{h^+_k(t,x_{\alpha_j}'(t))^2}{\alpha_j}\nabla_x h_k(t,x_{\alpha_j}'(t))\cdot(x_{\alpha_i}(t) -x_{\alpha_j}'(t))\\
\end{align*}
From assumption (H3)* we obtain
\[
|f(t,x_{\alpha_i}(t),u_i(t)) -f(t,x_{\alpha_j}'(t),u_i(t))| \leq k_f(t) |x_{\alpha_i}(t) -x_{\alpha_j}'(t)|,
\]
\[
|f(t,x_{\alpha_j}'(t),u_i(t)) -f(t,x_{\alpha_j}'(t),u_j'(t))| \leq c_f m(u_i(t),u_j'(t)). 
\]
where 
\[
m(u_i(t),u_j'(t)) :=
\begin{cases}
1 & \text{if } u_i(t)\neq u_j'(t),\\
0 & \text{if } u_i(t)= u_j'(t).
\end{cases}
\]
It follows that
\begin{align*}
    \nabla_x h_k(t,x_{\alpha_i}(t))\cdot & ( x_{\alpha_j}'(t) - x_{\alpha_i}(t)) 
    = \nabla_x h_k(t,x_{\alpha_i}(t))\cdot( x_{\alpha_j}'(t) - \pi_k^{i,j}(t))\\
		& \hspace{3 in}+ \nabla_x h_k(t,x_{\alpha_i}(t))\cdot(\pi_k^{i,j}(t) -  x_{\alpha_i}(t)) \\
    &\leq |\nabla_x h_k(t,x_{\alpha_i}(t))| |x_{\alpha_j}'(t) - \pi_k^{i,j}(t)| + \frac{|\nabla_x h_k(t,x_{\alpha_i}(t))|}{2r'} |\pi_k^{i,j}(t) -  x_{\alpha_i}(t) |^2 \\
    &\leq K_h\left( d_{C_k(t)}(x_{\alpha_j}'(t) ) + \frac{2}{r'} |x_{\alpha_j}'(t) -  x_{\alpha_i}(t) |^2 \right) \\
    &\leq K_h \left((3A)^{1/3}\Lambda \sqrt{\alpha_j}+ \frac{2}{r'} |x_{\alpha_i}(t) - x_{\alpha_j}'(t) |^2 \right).
\end{align*}
Similarly
\[
\nabla_x h_k(t,x_{\alpha_j}'(t))\cdot(x_{\alpha_i}(t) -x_{\alpha_j}'(t)) \leq K_h \left((3A)^{1/3}\Lambda \sqrt{\alpha_i}+ \frac{2}{r'} |x_{\alpha_i}(t) - x_{\alpha_j}'(t) |^2 \right).
\]
Let $C_1(t) := k_f(t) +  (3A)^{2/3}k_2 K_h \frac{4}{r'} $ and $C_2:=3Ak_2 K_h\Lambda$, so that
\[
\dot{z}_{i,j}(t) \leq C_1(t) z(t) + c_f m(u_i,u_j')\sqrt{z(t)} + C_2 (\sqrt{\alpha_i} + \sqrt{\alpha_j}).
\]
Then, by Gronwall's lemma,
\[
z_{i,j}(t) \leq \left(z(S_i \vee S_j')+C_2 (t-S_i \vee S_j')(\sqrt{\alpha_i}+\sqrt{\alpha_j}) + c_f d_\mathcal{E}(u_i,u_j') \right)e^{(t-S_i \vee S_j')c_f+ \int_{S_i \vee S_j'}^t C_1(s) \dd s}.
\]
Moreover,
\[
|x_{\alpha_i}(S_i \vee S_j') -x_{\alpha_j}'(S_i \vee S_j')| \leq |x_{\alpha_i}(S_i) -x_{\alpha_j}'(S_j')| + (c_f+ (3A)^{2/3}k_2K_h)|S_i - S_j'|.
\]
Define $K :=\sqrt{(1+(c_f+ (3A)^{2/3}k_2K_h)^2+C_2(\bar{T}-\bar{S}+2\delta) + c_f)e^{(\bar{T}-\bar{S}+2\delta)c_f+\|C_1\|_{L^1}}}$. Thus, for all \(t\in[S_i\vee S_j',\,T_i\wedge T_j']\),
\[
|x_{\alpha_i}(t) -x_{\alpha_j}'(t)| \leq K \left({\alpha_i}^{1/4} + {\alpha_j}^{1/4} +|S_i - S_j'|+|x_{\alpha_i}(S_i) -x_{\alpha_j}'(S_j')| +  \sqrt{d_\mathcal{E}(u_i,u_j')} \right).
\]
Furthermore, for $t \geq T_i\wedge T_j'$
\begin{align*}
    |(x_{\alpha_i})_e(t) -(x_{\alpha_j}')_e(t)| 
    &\leq |(x_{\alpha_i})_e(t)-x_{\alpha_i}(T_i) | + |x_{\alpha_i}(T_i)-x_{\alpha_j}'(T_j')| + |x_{\alpha_j}'(T_j') -(x_{\alpha_j}')_e(t)| \\
    &\leq |x_{\alpha_i}(T_i)-x_{\alpha_j}'(T_j')|+ (c_f+ (3A)^{2/3}k_2K_h)|T_i-T_j'|,
\end{align*}
Similarly, for all $t \leq S_i\vee S_j'$, we obtain
\[
|(x_{\alpha_i})_e(t) -(x_{\alpha_j}')_e(t)| \leq |x_{\alpha_i}(S_i) -x_{\alpha_j}'(S_j')| + (c_f+ (3A)^{2/3}k_2K_h)|S_i-S_j'|.
\]
Hence, for all $t \in [\bar{S}-\delta,\bar{T}+\delta]$,
\[
|(x_{\alpha_i})_e(t) -(x_{\alpha_j}')_e(t)|
\leq K\Big(
\alpha_i^{1/4}+\alpha_j^{1/4}
+|S_i-S_i'|+|T_i-T_i'|
+|x_{\alpha_i}(S_i)-x_{\alpha_j}'(S_j')|
+\sqrt{d_{\mathcal E}((u_i)_e,(u_j')_e)}
\Big).
\]
\end{proof}

\section{Proof of the Free End-Time Maximum Principle (Measurable Time-dependent Case)}\label{Ek-sec}

\noindent
{\bf Proof of Theorem \ref{pm}.} 
We start constructing a family of auxiliary dynamic optimization problems. Set 
\[
X:=\left\{ \left( [S,T],y,u \right) \text{ process for } (SP) \text{ such that } 
|S-\bar S|+|T-\bar T|+|y(S)-\bar x(\bar S)| \leq \bar \delta/2 \right\}.
\]
We endow $X$ with the metric \( d_X : X \times X \to [0,+\infty) \) defined by
\[
d_X\left( ([S,T], y, u ), ([S', T'], y', u') \right) = |S - S'| + |T - T'| + |y(S) - y'(S')| + d_{\mathcal{E}}(u, u').
\]
It is straightforward to see that $(X,d_X)$ is a complete metric space. Let $(\varepsilon_i)_{i \in \N}$ be a decreasing sequence converging to $0$. For a fixed $i \in \N$, consider the optimization problem
\begin{equation*}
(SP_{i}) \left\{ \begin{array}{ll}
\text{Minimize} &
\Psi_{i}(([S,T],y,u)) := \big(g(S,y(S),T,y(T)) - g(\Bar{S},\Bar{x}(\Bar{S}),\Bar{T},\Bar{x}(\Bar{T})) + \varepsilon_i^2 \big) \\
& \qquad \vee d_E(S,y(S),T,y(T)) \\
\text{over processes} & \left( [S,T],y,u \right) \in X.
 \\
\end{array}
\right.
\end{equation*}
Observe that $([\bar{S},\bar{T}],\bar x,\bar{u})$ is an $\varepsilon_i^2$-minimizer for $(SP_i)$ and $\Psi_{i}$ is continuous on $X$. Therefore Ekeland's variational principle asserts the existence of $([\sigma_i,\tau_i],y_i,v_i) \in X$ which is a minimizer for the problem
\begin{equation*}
(Q_{i}) \left\{ \begin{array}{ll}
\text{Minimize} &
\Phi_{i}(([S,T],y,u)) := \big(g(S,y(S),T,y(T)) - g(\Bar{S},\Bar{x}(\Bar{S}),\Bar{T},\Bar{x}(\Bar{T})) + \varepsilon_i^2 \big) \\
& \qquad \vee d_E(S,y(S),T,y(T)) + \varepsilon_id_X(([S,T],y,u),([\sigma_i,\tau_i],y_i,v_i)) \\
\text{over processes} & \left( [S,T],y,u \right) \in X,
 \\
\end{array}
\right.
\end{equation*}
and satisfies $d_X(([\sigma_i,\tau_i],y_i,v_i),([\bar S,\bar T],\bar x,\bar u)) \leq \varepsilon_i$. 
Observe that 
$$
\Psi_{i}(([S,T],y,u)) >0, \quad \text{for all } \; ( [S,T],y,u ) \in X,
$$
since, otherwise, we would arrive at a contradiction of the minimality of $([\bar S,\bar T],\bar x,\bar u)$. 
As a consequence, we obtain that  
\begin{equation*}
\min_X \Phi_{i}(( [S,T],y,u )) =    \Phi_{i}(([\sigma_i,\tau_i],y_i,v_i)) >0,
\end{equation*}
Then, from Lemma \ref{cvg} and Lemma \ref{stg}, it follows that, for each $i\ge 1$, we can find $\rho_i \in (0,\varepsilon_i/2)$ and $\alpha_i>0$ such that 
\begin{equation}\label{rha}
   \Phi_{i}(( [S,T],x,u )) \ge \Phi_{i}([\sigma_i,\tau_i],y_i,v_i) - \frac{\rho_i^2}{2}>0, 
\end{equation}
and 
\begin{equation}\label{rhb}
   | \Phi_{i}( ([S,T],x,u )) - \Phi_{i}( ([S,T],y,u ))| < \frac{\rho_i^2}{2},
\end{equation}
for all $\left( [S,T],x,u \right) \in W_i$, where $\left( [S,T],y,u \right) \in X$ is the corresponding element taking the same time interval $[S,T]$ and the same control $u$, and
\[
\begin{aligned}
W_i:=\Big\{ \left( [S,T],x,u \right) & \text{ process for } (CS_{\alpha_i}) \text{ such that } x(S) \in C(S) \text{ and } \\
&|S-\bar S|+|T-\bar T|+|x(S)-\bar x(\bar S)| \leq \bar \delta/2  \Big\}.
\end{aligned}
\]
Write $\tilde x_i \in W^{1,1}([\sigma_i,\tau_i];\R^n)$ the solution to $(CS_{\alpha_i})$ associated with the control $v_i$ such that $\tilde x_i(\sigma_i)=y(\sigma_i)$.
We obtain that a process $([\sigma_i,\tau_i],\tilde x_i,v_i)$ for $(CS_{\alpha_i})$. Notice that $([\sigma_i,\tau_i],\tilde x_i,v_i)\in W_i$ is a $\rho_i^2$-minimizer for the continuous map $\Phi_i : (W_i,d_X) \to (\mathbb{R},|\cdot|)$. 
It is straightforward to see that $(W_i,d_X)$ is a complete metric space.

Applying Ekeland's variational principle again, for each $i\ge 1$, we obtain a process $([S_i,T_i],x_i,u_i) \in W_i$ such that
\begin{equation}
d_X\big(([S_i,T_i],x_i,u_i),([\sigma_i,\tau_i],\tilde x_i,v_i)\big) \le \rho_i,
\end{equation}
and
\begin{equation}
\Phi_i([S,T],x,u) + \rho_i\, d_X\big(([S,T],x,u),([S_i,T_i],x_i,u_i)\big)
\ge 
\Phi_i([S_i,T_i],x_i,u_i)
\end{equation}
for all $([S,T],x,u) \in W_i$. Therefore, the process $([S_i,T_i],x_i,u_i)$ is a local minimizer for the dynamic optimization problem 
\begin{equation*}
(P_i) \left\{ \begin{array}{ll}
\text{Minimize } &
 \left(g(S,x(S),T,x(T)) - g(\Bar{S},\Bar{x}(\Bar{S}),\Bar{T},\Bar{x}(\Bar{T})) +\varepsilon_i^2 \right) \vee d_E(S,x(S),T,x(T)) \\
&+ \varepsilon_i \left(|S-\sigma_i|+|T-\tau_i|+|x(S)-y_i(\sigma_i)|+z(T) \right) \\&
+ \rho_i \left(|S-S_i|+|T-T_i|+|x(S)-x_i(S_i)|+w(T) \right) \\
\text{over} & 
\text{intervals }[S,T] \subset \R, \text{ arcs } (x,z,w) \in W^{1,1}([S,T];\mathbb{R}^{n}\times \R\times \R), \\
& \text{and measurable functions } u : [S,T] \to \R^{m}, \\
\text{such that} & 
\dot{x}(t) = f(t,x(t),u(t)) -\frac{1}{\alpha_i} \nabla_x h(t, x(t)) \text{  for a.e. } t \in [S,T], \\
& \dot{z}(t) = m_i(t,u(t)) \text{  for a.e. } t \in [S,T], \\
& \dot{w}(t) = m_i'(t,u(t)) \text{  for a.e. } t \in [S,T], \\
& u(t) \in U(t) \text{  for a.e. } t \in [S,T],\\
& z(S)=0, \, w(S)=0, \, \\
& h_j(S,x(S))\le 0 \text{  for all } j=1,\dots,J ,\\
& |S-\bar S|+|T-\bar T|+|x(S)-\bar x(\bar S)| \leq \bar \delta/2, 
\end{array}
\right.
\end{equation*}
where  
\(
m_i(t,u):=
\begin{cases}
\,1 & \text{if } u\neq v_i(t),\\
\,0 & \text{otherwise},
\end{cases}
\)
and 
\(
m_i'(t,u):=
\begin{cases}
\,1 & \text{if } u\neq u_i(t),\\
\,0 & \text{otherwise}.
\end{cases}
\)

 \ \\

\noindent
{\bf Analysis of multipliers and their convergence properties.} 

\noindent
We are now ready to apply the necessary conditions of optimality for the local minimizer $([S_i,T_i],x_i,u_i)$ of the dynamic optimization problem $(P_i)$. We observe that for all $i$ sufficiently large the constraint `$|S-\bar S|+|T-\bar T|+|x(S)-\bar x(\bar S)| \leq \bar \delta/2$' is never active. Moreover, there exists $R>0$ such that $||x_i||_{L^\infty}< R$ for all $i\ge 1$. 
The free end-time maximum principle (see \cite[Theorem~9.6.2]{Bettiol-Vinter-book2024} or \cite[Theorem~3.1]{Bettiol-Vinter-SICON2025})  
yields the existence of multipliers $p_i \in W^{1,1}([S_i,T_i];\R^n)$, $\lambda_i^{(1)}, \ \lambda_i^{(2)}, \ \lambda_i^{(3)} \in [0,1]$ and $\beta^i_j\ge 0$, with $\lambda_i^{(1)} + \lambda_i^{(2)} + \lambda_i^{(3)} =1$ and $\sum_{j=1}^J \beta^i_j=1$, satisfying the following properties (for $i$ large enough).
For a.e. $t\in [S_i,T_i]$,
\begin{equation}\label{eq:ii-vr}
\begin{aligned}
-\dot{p}_i(t) \in\;& 
\operatorname{co}\,\partial_x \mathcal{H}(t,x_i(t),p_i(t),u_i(t)) \\
&-\frac{1}{\alpha_i}\sum_{j=1}^J h_j^+(t,x_i(t))^2 \, p_i(t)\,D^2_x h_j(t,x_i(t)) \\
&-\frac{2}{\alpha_i}\,\sum_{j=1}^J h_j^+(t,x_i(t))\, p_i(t)\cdot \nabla_x h_j(t,x_i(t)) \, 
\nabla_x h_j(t,x_i(t)).
\end{aligned}
\end{equation}
Moreover, there exist $\xi_0^i$ and $\xi_1^i$ such that
\begin{equation}\label{eq:iii-vr}
\begin{aligned}
(-\xi_0^i,p_i(S_i),\xi_1^i,-p_i(T_i))
\in\;&
\lambda_i^{(1)}\,\partial g(S_i,x_i(S_i),T_i,x_i(T_i)) \\
&+ \lambda_i^{(2)}\,\partial d_E(S_i,x_i(S_i),T_i,x_i(T_i)) \\
&+ \lambda_i^{(3)}\,\sum_{j=1}^J \beta^i_j\nabla h_j(S_i,x_i(S_i))\times\{0\}\times\{0\}\\
&+ 2\varepsilon_i \,\big([-1,1]\times\mathbb{B}\times\{0\}\times\{0\}\big)
\end{aligned}
\end{equation}
where
\begin{align}
\xi_0^i &\in 
\underset{t\to S_i}{sub\text{-}ess}\!\left[
\sup_{u\in U(t)} p_i(S_i)\cdot f(t,x_i(S_i),u)
\right] + 2\varepsilon_i [-1,1]
-\frac{p_i(S_i)}{\alpha_i}\cdot \nabla_x h(S_i,x_i(S_i)), \label{t1}\\
\xi_1^i &\in 
\underset{t\to T_i}{super\text{-}ess}\!\left[
\sup_{u\in U(t)} p_i(T_i)\cdot f(t,x_i(T_i),u)
\right] + 2\varepsilon_i [-1,1]
-\frac{p_i(T_i)}{\alpha_i}\cdot \nabla_x h(T_i,x_i(T_i)).\label{t2}
\end{align}
Moreover, we have 
\begin{equation}\label{eq:iv-vr}
p_i(t)\cdot f(t,x_i(t),u_i(t))
\ge
p_i(t)\cdot f(t,x_i(t),u) - 2\varepsilon_i,
\quad \text{for all } u\in U(t), \; a.e. \; t\in[S_i,T_i].
\end{equation}

\noindent
Denote $K_2:=\underset{j \in \{1,\dots, J\}}{\underset{(s,x) \in [\bar S - \bar \delta, \bar T + \bar \delta]\times R\B}{\max}} |D^2_x h_j(s,x)|$.
Since, for a.e. $t \in [S_i,T_i]$
\begin{align*}
    p_i(t) \cdot D^2_x h(t,x_i(t)) p_i(t)
    &= 
    \sum_{j=1}^J h_j^+(t,x_i(t))^2 p_i(t) \cdot D^2_x h_j(t,x_i(t)) p_i(t) + 2 h_j^+(t,x_i(t)) |p_i(t) \cdot \nabla_x h_j(t,x_i(t))|^2 \\
    &\geq - (3A)^{2/3}k_2 \alpha_i
    |p_i(t)|^2 K_2,
\end{align*}
we obtain that
\begin{align*}
    -\frac{\dd}{\dd t}|p_i(t)| 
    &\leq |p_i(t)| k_f(t) - \frac{1}{\alpha_i |p_i(t)|} p_i(t) \cdot D^2_x h(t,x_i(t)) p_i(t) \\
    &\leq |p_i(t)| \left(k_f(t) + (3A)^{2/3}k_2 K_2\right), \quad a.e. \; t \in [S_i,T_i].
\end{align*}
Write $q_i: s \mapsto p_i(T_i-s)$, then for a.e. $t \in [0,T_i-S_i]$,
\[
\frac{\dd}{\dd t}|q_i(t)| \leq |q_i(t)| \left(k_f(T_i-t) + (3A)^{2/3}k_2 K_2\right).
\]
Thus Gronwall's lemma ensure that the sequence $(q_i)_{i \in \N}$ is bounded in $L^\infty$. Then, the sequence $(p_i)_{i \in \N}$ is also bounded in $L^\infty$.\\

\noindent
For a.e. $t \in [S_i,T_i]$, we have
\begin{align*}
    \dot{p}_i(t) 
    &\in 
    \frac{2}{\alpha_i} \underset{j =1}{\overset{J}{\sum}} h^+_j(t,x_i(t)) p_i(t) \cdot \nabla_x h_j(t,x_i(t)) \, \nabla_x h_j(t,x_i(t)) + B_i(t),
\end{align*}
where
\[
B_i(t) := \frac{p_i(t)}{\alpha_i} \underset{j =1}{\overset{J}{\sum}} h^+_j(t,x_i(t))^2 \, D^2_x h_j(t,x_i(t)) - {\rm co \,} \partial_x p_i(t) \cdot f(t,x_i(t),u_i(t)).
\]
Note that the following inclusion holds
\[
B_i(t) \subset \|p_i\|_{L^\infty(S_i,T_i)}(k_f(t) +(3A)^{2/3}k_2 K_2)\B.
\]
For all $j \in \{1, \dots, J\}$, we set
\[
\chi_i^j(t) := p_i(t)\cdot \nabla_x h_j(t,x_i(t)), \qquad \text{for all } t \in [S_i,T_i].
\]
Then, for a.e. $t \in [S_i,T_i]$,
\begin{align*}
    \dot{\chi}_i^j(t) &= \dot{p}_i(t) \cdot \nabla_x h_j(t,x_i(t)) + p_i(t) \cdot D^2_x h_j(t,x_i(t)) \dot{x}_i(t) + p_i(t) \cdot \nabla_t \nabla_x h_j(t,x_i(t)) \\
    &\in \frac{2}{\alpha_i} \underset{k \in I_\eta(t,x_i(t))}{{\sum}} h^+_k(t,x_i(t)) {\chi}_i^k(t) \, \nabla_x h_k(t,x_i(t)) \cdot \nabla_x h_j(t,x_i(t)) \\
    & \quad + B_i(t) \cdot \nabla_x h_j(t,x_i(t)) + p_i(t) \cdot D^2_x h_j(t,x_i(t)) \dot{x}_i(t) + p_i(t) \cdot 
		\nabla_t\nabla_x h_j(t,x_i(t)) .
\end{align*}
Set $\widehat K:=\underset{j \in \{1,\dots, J\}}{\underset{(s,x) \in [\bar S - \bar \delta, \bar T + \bar \delta]\times R\B}{\max}} |
\nabla_t\nabla_x h_j(s,x)|$ and
\[
D^j_i(t) := B_i(t) \cdot \nabla_x h_j(t,x_i(t)) + p_i (t) \cdot D^2_x h_j(t,x_i(t)) \dot{x}_i(t) + p_i (t) \cdot 
\partial_t \nabla_x h_j(t,x_i(t)).
\]
Note that
\[
D^j_i(t)\subset \|p_i\|_{L^\infty}\left(\big(k_f(t) +(3A)^{2/3}k_2 K_2\big)K_h + K_2 \big(c_f+(3A)^{2/3}k_2K_h\big) + \widehat K \right)\B.
\]
so, from the $L^\infty$-boundedness of $(p_i)_{i \in \N}$ we deduce that there exists a positive function $c \in L^1$ such that $D^j_i(t)\subset c(t)\B$.

On the other hand, for a.e. $t \in [S_i,T_i]$, (to simplify notation, we may omit the explicit time dependence of functions in the following)
\begin{align*}
    \sum_{j = 1}^J \dot{\chi}^j_i \operatorname{sgn}({\chi}^j_i) &\in \frac{2}{\alpha_i} \sum_{j = 1}^J \left(\underset{k \in I_\eta^+(t,x_i)}{{\sum}} h^+_k(t,x_i) {\chi}_i^k \, \nabla_x h_k(t,x_i) \cdot \nabla_x h_j(t,x_i) \right) \operatorname{sgn}({\chi}^j_i) +\sum_{j = 1}^J D^j_i \operatorname{sgn}({\chi}^j_i) \\
    &= \frac{2}{\alpha_i} \underset{k \in I_\eta^+(t,x_i)}{\sum} \left(
		\sum_{j = 1}^J\operatorname{sgn}({\chi}^k_i)  \, \nabla_x h_k(t,x_i) \cdot \nabla_x h_j(t,x_i)\operatorname{sgn}({\chi}^j_i)  \right) |\chi_i^k| h^+_k(t,x_i)\\
    & \quad +\sum_{j = 1}^J D^j_i \operatorname{sgn}({\chi}^j_i) \\
    &= \frac{2}{\alpha_i} \underset{k \in I_\eta^+(t,x_i)}{\sum} 
		\Big(\underset{j\neq k}{\underset{j \in I_\eta(t,x_i)}{{\sum}}} \operatorname{sgn}({\chi}^k_i)  \, \nabla_x h_k(t,x_i) \cdot \nabla_x h_j(t,x_i)\operatorname{sgn}({\chi}^j_i) \\
		&\hspace{2.5 in}	+ \left|\nabla_x h_k(t,x_i)\right|^2 \Big) |\chi_i^k| h^+_k(t,x_i)\\
    & \quad +\sum_{j = 1}^J D^j_i \operatorname{sgn}({\chi}^j_i). \\
\end{align*}
Applying (H5)$^*$ for a.e. $t \in [S_i,T_i]$ and for each $k \in I_\eta^+(t,x_i(t))$, we obtain 
\[
\sum_{j = 1}^J \dot{\chi}^j_i \operatorname{sgn}({\chi}^j_i) \geq \frac{2(1 - \rho)}{\alpha_i} 
|\nabla_x h_k(t,x_i(t))|^2 \sum_{j = 1}^J |\chi_i^j| h^+_j(t,x_i) -Jc(t).
\]
It follows that, for a.e. $t \in [S_i,T_i]$,
\begin{equation}\label{brn}
    \frac{1}{\alpha_i} \overset{J}{\underset{j=1}{\sum}}  |\chi_i^j| h^+_j(t,x_i) \leq \left( {2(1 - \rho){\underset{k \in I_\eta^+(\tau,x)}{\underset{(\tau,x) \in [\bar S - \bar \delta, \bar T + \bar \delta]\times R\B}{\inf}} |\nabla_x h_k(\tau,x)|^2}}\right)^{-1} \left(\left({\sum_{j=1}^J}\dot{\chi}^j_i \operatorname{sgn}({\chi}^j_i) \right)+Jc\right).
\end{equation}
Notice that
\begin{align*}
    \int_{S_i}^{T_i} \sum_{j=1}^J \dot{\chi}^j_i(t) \operatorname{sgn}({\chi}^j_i(t)) \, \dd t
    &= \sum_{j=1}^J |{\chi}^j_i(T_i)| - |{\chi}^j_i(S_i)| \\
    &\leq J\|p_i\|_{L^\infty} K_h.
\end{align*}
Hence, all the terms on the right side of the inclusion \eqref{eq:ii-vr} are uniformly bounded in $L^1$; consequently, the sequence $(\dot{p}_i)_{i \in \N}$ is uniformly bounded in $L^1$.\\

\noindent
By Helly's theorem, we may extract a subsequence of $(p_i)_{i\in\N}$ (we do not relabel) that converges pointwise on $[\bar S-\bar\delta,\bar T+\bar\delta]$ to a function $p$ of bounded variation, and such that the associated measures $(\dot p_i(t)\,\dd t)$ converge weakly$^*$ in $C^0([\bar S-\bar\delta,\bar T+\bar\delta];\R^n)^*$ to the (signed) Radon measure $\dd p$ (here, we consider the extension of $p_i$ and $\dot p_i$ on the compact time interval $[\bar S-\bar\delta,\bar T+\bar\delta]$ according to the preceding convention, by constant extrapolation for arcs and by zero extrapolations for their derivatives). 
If necessary, we modify the values of $p$ at $\bar S$ and $\bar T$ so that $p(\bar S - \bar \delta) = p(\bar{S})$ and $p(\bar T+\bar\delta) = p(\bar{T})$.

Along the same subsequence we may also assume that
$\lambda_i^1\to\lambda\in[0,1]$, 
$\xi_0^i\to\xi_0\in\R$, 
$\xi_1^i\to\xi_1\in\R$, 
and, by the Banach--Alaoglu theorem, we may extract a further subsequence (we do not relabel) such that for each $j \in \{1,\dots,J\}$, the sequence of functions $\displaystyle \left(\frac{h_j^+\!\bigl(t,x_i\bigr)^2}{\alpha_i}\right)_{i \in \N}$ extended on $[\bar S-\bar\delta,\bar T+\bar\delta]$ converges weakly in $L^2(\bar S-\bar\delta,\bar T+\bar\delta)$ to a function $\eta_j$ that takes values in $[0,(3A)^{2/3}k_2]$.
It follows that condition (ii) of Theorem \ref{pm} holds.

Moreover, the right term of inequality \eqref{brn} is bounded in $L^1$. Hence, for each $j \in \{1,\dots,J\}$, from the sequence of measures extended on $[\bar S - \delta,\bar T+\bar\delta]$
\[
\left(\frac{2}{\alpha_i}h_j^+(t,x_i(t))p_i(t)\cdot \nabla_xh_j(t,x_i(t)) \, \dd t \right)_{i \in \N}
\]
we can extract a subsequence (we do not relabel) that converges weakly$^*$ in $C^0([\bar S - \delta,\bar T+\bar\delta];\R^n)$ to Radon measure $\dd \mu_j$.  
Combined with the definition of $(\eta_j)_{1\leq j\leq J}$, this yields conditions~(iii) and (vii) of Theorem \ref{pm}. Since, for each $j \in \{1,\dots,J\}$,
\[
\left(
\frac{2}{\alpha_i}
h_j^+\!\bigl(t,x_i(t)\bigr)\,
\bigl|p_i(t)\cdot\nabla_x h_j(t,x_i(t))\bigr|^2
\,\dd t
\right)_{i\in\N}
\]
is a sequence of positive measures, passing to the limit gives condition (viii) of Theorem \ref{pm}. Also, for each $j \in \{1,\dots,J\}$
\[
\eta_j(t)\,p(t)\cdot\nabla_x h_j(t,\bar x(t)) \dd t= h_j^+ (t,\bar x(t)) \dd \mu_j (t) = 0.
\]
from which condition (vi) of Theorem \ref{pm} follows.

\noindent
We now examine the transversality condition. 
By definition of $(\eta_j)_{1\leq j\leq J}$ we obtain:
\[
    \frac{\nabla_x h(S_i,x_i(S_i))}{\alpha_i} \underset{i \to + \infty}{\longrightarrow} \sum_{j=1}^J \eta_j(\bar S) \nabla_x h_j(\bar S,\bar x(\bar S)) \in N_{C(\bar S)}(\bar x (\bar S)),
\]
\[
    \frac{\nabla_x h(T_i,x_i(T_i))}{\alpha_i} \underset{i \to + \infty}{\longrightarrow} \sum_{j=1}^J \eta_j(\bar T) \nabla_x h_j(\bar T,\bar x(\bar T)) \in N_{C(\bar T)}(\bar x (\bar T)).
\]
Then, invoking the limiting and stability properties of the sub- and super-essential values (see \cite{Bettiol-Vinter-SICON2025} and \cite[Chapter 9]{Bettiol-Vinter-book2024}), from \eqref{eq:iii-vr}, \eqref{t1} and \eqref{t2}, passing to the limit as $i \to \infty$, we arrive at the transversality condition (iv).
Condition (v) of Theorem \ref{pm} follows from \eqref{eq:iv-vr}.
Finally, condition (i) of Theorem \ref{pm} follows from \eqref{rha} with a standard argument (cf. \cite{Bettiol-Vinter-book2024}). \hfill $\square$

\paragraph{Proof of Proposition \ref{libre}.}
In addition to (H1)-(H5), we assume that $E=E_0\times\R^n$, where $E_0$ is a closed subset of ${\rm Gr}\, C \times\R$.
In this case, applying the necessary conditions to the dynamic optimization problem $(P_i)$, in place of \eqref{eq:iii-vr} we obtain the following (simpler) transversality conditions for the local minimizer $([S_i,T_i],x_i,u_i)$: 
\begin{align*}
(-\xi_0^i,p_i(S_i),\xi_1^i,-p_i(T_i))
    \in & \lambda_i\partial g(S_i,x_i(S_i),T_i, x_i(T_i)) + (1-\lambda_i)\,\partial d_{E_0}(S_i,x_i(S_i),T_i) \times \{0\} \\
    & + 2\varepsilon_i\bigl([-1,1]\times\B\times \{0\} \times\{0\}\bigr),
\end{align*}
where $\lambda_i\in [0,1]$. Using standard arguments, it is immediate to see that, passing to the limit as $i \to \infty$, we necessarily have $\lambda_i \to \lambda >0$. Normalizing, we arrive at the desired conclusion. 

\hfill $\square$

\ \\

\paragraph{Proof of Theorem \ref{pmo}}

Observe that there exists a constant $R>0$ such that $\|\bar x\|_{L^\infty([\bar S,\bar T];\mathbb{R}^n)} + 2\bar \delta \leq R$ and (reducing the size of $r>0$ if necessary) $C_j(t) \cap R\B$, for all $j \in \{1,\dots,J\}$, $C(t) \cap \left(\tilde y(t)+ \delta_0 \mathbb{B} \right)$ and $C(t) \cap R\B$ are uniformly $r$-prox-regular for all $t \in [\bar S- \bar \delta, \bar T +\bar \delta]$. 
We define the scalar function $\tilde h : \mathbb{R} \times \mathbb{R}^n \to \mathbb{R}$ by
\[ 
\tilde h(t,x) := \frac{1}{3}\sum_{j=1}^{J+1} h_j^+(t,x)^3,
\]
where the family of functions $(h_j)_{1\leq j \leq J}$ is provided by $(\widetilde{H4})$ (i), and
\[
h_{J+1}(t,x) := |x-\tilde{y}(t)|^2 - \delta_0^2.
\]
Hereafter, the proof of Theorem~\ref{pmo} follows the steps of Theorem~\ref{pm}, considering a modified setting in which we replace $h$ with $\tilde h$ and $C(t)$ with $C(t) \cap \left(\tilde y(t) + \delta_0 \mathbb{B} \right)$.
More precisely, we consider, for a given real number $\alpha >0$, the control system
\begin{equation*}
    (\widetilde{CS_\alpha})\; \left\{
    \begin{aligned}
        &\dot{x}(t) = f(t,x(t),u(t)) - \dfrac{1}{\alpha}\,\nabla_x  \tilde h (t,x(t)), \text{ a.e. } t \in [S,T], \\
        & u(t) \in U(t), \text{ a.e. } t \in [S,T], \\
        & |S-\bar S|+|T - \bar T|+ \|x_e -\bar x_e\|_{L^\infty([\bar{S}-\bar\delta,\bar{T}+\bar\delta],\R^n)}\leq \bar \delta.
    \end{aligned}
    \right.
\end{equation*}
We apply Lemma~\ref{lhy} below in place of Lemma~\ref{ingt}. This allows to derive a modified version of Lemmas~\ref{cvg} and \ref{stg} in which we replace hypothesis (H4) with $(\widetilde{H4})$ and the sweeping process (SP1) with 
\[
    ( \widetilde{SP1}) \quad
    \begin{cases}
    \dot{x}(t)=f(t,x(t),u(t)) - N_{C(t)\cap \left(\tilde y(t)+ \delta_0 \mathbb{B} \right)}(x(t)), \quad 
     \text{a.e. } t\in[S,T],\\
		  u(t) \in U(t), \text{ a.e. } t \in [S,T], \\
    x(S)=x_0 \in C(S).
    \end{cases}
    \]

\begin{lem}\label{lhy}
Assume that $(\widetilde{H4})$ holds. Then, there exist constants \(\gamma,\eta>0\) such that
\[
\bigl|\nabla_x \tilde h(t,x)\bigr|\ge \gamma\sum_{j=1}^{J+1} h_j^+(t,x)^2,
\]
for all $(t,x) \in \{ (s,y) \; : \; s\in [\bar{S}- \bar \delta,\bar{T}+ \bar \delta], \;  \tilde h (s,y) \le \eta \} $. 
\end{lem}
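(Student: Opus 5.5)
We argue by contradiction, along the lines of Lemma~\ref{ingt}. Since $\nabla_x \tilde h(t,x)=\sum_{j=1}^{J+1}h_j^+(t,x)^2\nabla_x h_j(t,x)$, it suffices to show that there are $\eta>0$ and $\gamma>0$ such that
\[
\Big|\sum_{j\in I^+(t,x)}\alpha_j\nabla_x h_j(t,x)\Big|\ge\gamma
\]
for all convex weights $\alpha_j\ge0$, $\sum_j\alpha_j=1$, supported on the set $I^+(t,x)$ of indices $j\in\{1,\dots,J+1\}$ with $h_j(t,x)>0$. The bound is needed at every point with $\tilde h(t,x)\le\eta$ and $x\notin C(t)\cap(\tilde y(t)+\delta_0\B)$; at the remaining points both sides vanish. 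We then take $\alpha_j=h_j^+(t,x)^2/\sum_k h_k^+(t,x)^2$.

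Suppose the claim fails. Then there are $\eta_i\downarrow0$, $(t_i,x_i)$ with $\tilde h(t_i,x_i)\le\eta_i$, and convex weights $\alpha^i$ supported on $I^+(t_i,x_i)$ with $\big|\sum_j\alpha^i_j\nabla_x h_j(t_i,x_i)\big|\to0$. The first step is to localize. Since $h_{J+1}^+(t_i,x_i)^3\le3\eta_i$, we have $|x_i-\tilde y(t_i)|\le\delta_0+o(1)$, so $(t_i,x_i)$ lies in a compact set; this is the key point, because the original $h_j$ give no uniform estimate on all of $\R^n$. Passing to a subsequence, $t_i\to t$, $x_i\to x$ and $\alpha^i\to\alpha$, with $\alpha$ a convex weight. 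Continuity then gives $h_j(t,x)\le0$ for all $j$, and $\alpha$ is supported on the indices active at $(t,x)$, that is, $j\in I(t,x)$ together with possibly $J+1$ if $|x-\tilde y(t)|=\delta_0$. By continuity of the gradients, $\sum_j\alpha_j\nabla_x h_j(t,x)=0$.

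The main obstacle is to exclude this limiting relation, which means checking positive linear independence for the enlarged family at $x\in\partial\big(C(t)\cap(\tilde y(t)+\delta_0\B)\big)$. We split into two cases.

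\emph{Case 1: $\alpha_{J+1}=0$.} Then $x\in\partial C(t)$, and $(\widetilde{H4})$(iii) forces all $\alpha_j=0$, contradicting $\sum_j\alpha_j=1$.

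\emph{Case 2: $\alpha_{J+1}>0$.} Then $|x-\tilde y(t)|=\delta_0$ and
\[
2\alpha_{J+1}\bigl(x-\tilde y(t)\bigr)=-\sum_{j\in I(t,x)}\alpha_j\nabla_x h_j(t,x),
\]
so $-(x-\tilde y(t))$ is, up to a positive factor, a normal vector to $C(t)$ at $x$; this uses Remark~\ref{nc}(a), whose representation of the normal cone relies only on positive linear independence. We now use $\|\bar x_e-\tilde y\|<\delta_1\le\delta_0/4$ together with the $r$-prox-regularity of $C(t)\cap(\tilde y(t)+\delta_0\B)$. The point $\bar x_e(t)\in C(t)$ lies within $\delta_0/4$ of the centre $\tilde y(t)$, whereas $x$ lies on the sphere of radius $\delta_0$. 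The vector $\zeta=-(x-\tilde y(t))/\delta_0$ is a proximal normal to $C(t)\cap(\tilde y(t)+\delta_0\B)$ at $x$, so the prox-regular normal inequality gives
\[
\zeta\cdot\bigl(\bar x_e(t)-x\bigr)\le\frac{1}{2r}\,|\bar x_e(t)-x|^2 .
\]
On the other hand, the left side is at least $\tfrac34\delta_0$, since $\bar x_e(t)$ is close to the centre and $\zeta$ points from $x$ towards the centre. This is a contradiction provided $\delta_0$ is small relative to $r$. If the constants do not allow this directly, we shrink $\delta_0$, which is permitted since prox-regularity is inherited on smaller balls, or argue with the segment from $x$ towards $\bar x_e(t)$ to obtain a contradiction with normality. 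We expect this case to require the most care. Once both cases are excluded, the claimed constants $\gamma,\eta$ follow.
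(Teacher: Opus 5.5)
Your route is the paper's: contradiction, compactness, limiting convex weights, then $(\widetilde{H4})$(iii) when the ball constraint carries no weight and prox-regularity when it does. Your localization through $h_{J+1}$, which the paper passes over as ``standard compactness'', is correct, and so is your split on $\alpha_{J+1}$. Applying the prox-regular inequality to $K:=C(t)\cap(\tilde y(t)+\delta_0\B)$, where the hypothesis actually holds, is also right. The gap is Case 2, and neither fallback closes it.
\begin{itemize}
\item Shrinking $\delta_0$ changes $h_{J+1}$, hence $\tilde h$, so it proves an estimate for a different function.
\item Moreover, $(\widetilde{H4})$(iv) ties $\delta_0$ to $\tilde y$ through $\|\bar x_e-\tilde y\|<\delta_1\le\delta_0/4$. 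A smaller $\delta_0$ is therefore generally not admissible with the same $\tilde y$, and prox-regularity of the smaller intersection is not part of the hypothesis.
\item The segment argument fails because $K$ need not be convex.
\end{itemize}

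In fact no argument can close Case 2, because without a relation between $\delta_0$ and $r$ the statement is false. Take $n=2$, $J=1$ and $C(t)\equiv C=\{h_1\le 0\}$ with $h_1(z)=0.16-(z_1-0.6)^2$, so that $C=\{z_1\le 0.2\}\cup\{z_1\ge 1\}$. Take $\bar x\equiv\tilde y\equiv 0$ (e.g.\ $f\equiv 0$), $\delta_0=1$ and $\delta_1=1/4$.

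Then $C\cap\B$ is the convex set $\{z_1\le0.2\}\cap\B$ plus the isolated point $(1,0)$ at distance $0.8$, so it is $0.4$-prox-regular. Positive linear independence is trivial here, so $(\widetilde{H4})$ holds with $r=0.4<\delta_0$.

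With $h_2(z)=|z|^2-1$, take $z=(1-s,w)$ with $w>0$ and $h_2(z)=\theta h_1(z)$, where $\theta^2=(0.8-2s)/(2(1-s))$. Then $w^2=2s-s^2+\theta(0.8s-s^2)$, and the first component of $\nabla_x\tilde h(z)$ vanishes. This gives
\[
\frac{|\nabla_x\tilde h(z)|}{h_1^+(z)^2+h_2^+(z)^2}=\frac{2w\,\theta^2}{1+\theta^2}\longrightarrow 0,\qquad \tilde h(z)\longrightarrow 0\qquad (s\downarrow 0).
\]
The limit point $x=(1,0)$ is exactly your Case 2: there $\tilde y-x=(-1,0)\in N_C(x)$.

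So what is missing is a hypothesis, not a trick: one needs $\delta_0\le r$. The paper's proof uses this tacitly when it deduces $x\in\pi_{C(t)}(\tilde y(t))$ from $\tilde y(t)-x$ being a normal of length $\delta_0$.

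Once $\delta_0\le r$ is assumed, your own inequality closes Case 2, provided you expand exactly. Your bounds $\tfrac34\delta_0$ and $\tfrac{25}{16}\delta_0^2$ would force the stronger requirement $\delta_0<\tfrac{24}{25}r$. Writing $\tilde y(t)=x+\delta_0\zeta$,
\[
|\tilde y(t)-\bar x_e(t)|^2=\delta_0^2+|x-\bar x_e(t)|^2-2\delta_0\,\zeta\cdot(\bar x_e(t)-x)\ge\delta_0^2+\Bigl(1-\frac{\delta_0}{r}\Bigr)|x-\bar x_e(t)|^2\ge\delta_0^2 .
\]
This contradicts $|\tilde y(t)-\bar x_e(t)|<\delta_1<\delta_0$ and matches the paper's threshold.

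Finally, like the paper, you use $\bar x_e(t)\in C(t)$. That is guaranteed only for $t\in[\bar S,\bar T]$, not on the extension intervals $[\bar S-\bar\delta,\bar S)$ and $(\bar T,\bar T+\bar\delta]$.
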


\begin{proof} 
We argue by contradiction. Suppose there exist $\eta_i \downarrow 0$, $\gamma_i \downarrow 0$,  $t_i \in [\bar S- \bar \delta, \bar T +\bar \delta]$ and $x_i$ such that $\tilde  h(t_i,x_i)\le \eta_i$ and 
\[
\bigl|\nabla_x \tilde h(t_i,x_i)\bigr|< \gamma_i\sum_{j=1}^{J+1} h_j^+(t_i,x_i)^2.
\]
Then, we would have
\[
\left| \sum_{j=1}^{J+1} \frac{h_j^+(t_i,x_i)^2}{\sum_{j=1}^{J+1} h_j^+(t_i,x_i)^2} \nabla_x h_j(t_i,x_i) \right| \to 0 \text{ as } i \to +\infty.
\]
Using a standard compactness argument, we can extract subsequences (we do not relabel) such that $(t_i,x_i) \to (t,x)$ and 
\[
\frac{h_j^+(t_i,x_i)^2}{\sum_{j=1}^{J+1} h_j^+(t_i,x_i)^2} \to \alpha_j, 
\]
for $j=1,\dots,J+1$, as $i \to +\infty$ for some $t \in [\bar S- \bar \delta, \bar T +\bar \delta]$, $x \in C(t)\cap (\tilde y (t) +  \delta_0 \B)$ and $\alpha_j \in [0,1]$ such that $ \sum_{j=1}^{J+1} \alpha_j =1$.
Passing to the limit yields
\[
\sum_{j \in I(t,x)} \alpha_j \nabla_x h_j(t,x) = 2 \alpha_{J+1}(\tilde y(t) - x),
\]
thus $\tilde y(t) - x \in N_{C(t)}(x)$. 
We distinguish two situations. 

\noindent
{\it Case 1:} $\sum_{j \in I(t,x)} \alpha_j \nabla_x h_j(t,x) =0$. But, then, from $(\widetilde{H4})$ (iii) we deduce that $\alpha_{1}= \dots=\alpha_{J}=0$, and, so, $\alpha_{J+1} =1 $ and $\tilde y(t) = x$. In view of the definition of $\alpha_{J+1}$ we also know that $h_{J+1}^+(t,x)=0$ and, then, $h_{J+1}(t,x)= 0$, that is $|\tilde y(t) - x|=\delta_0$. A contradiction.

\noindent
{\it Case 2:} $\sum_{j \in I(t,x)} \alpha_j \nabla_x h_j(t,x) \neq 0$. 
Then,  $\alpha_{J+1}(\tilde y(t) - x)\neq 0$ and $\alpha_{J+1}(\tilde y(t) - x)\in N_{C(t)}(\tilde y(t))$. Thus, $x \in \pi_{C(t)}(\tilde y(t))$ and $d_{C(t)}(\tilde y(t))=\delta_0$. But, $|\tilde y(t) - \bar x(t)| \leq \bar \delta_1$, that implies $d_{C(t)}(\tilde y(t))\le\delta_1< \delta_0$. A contradiction.
\end{proof}

\ \\

\noindent
According to the modified setting, we set
\[
\widetilde X :=\left\{ \left( [S,T],y,u \right) \text{ process for } ( \widetilde{SP1}) \;\; \text{ s.t. } 
|S-\bar S|+|T-\bar T|+|y(S)-\bar x(\bar S)| \leq \bar \delta/2 \right\},
\]
and, for each $i \in \N$, we consider the modified optimization problems $(\widetilde{SP_{i}})$, $(\widetilde{Q_{i}})$, and $(\widetilde{P_{i}})$, in which $\tilde h$ and $C(t) \cap \left(\tilde y(t)+ \delta_0 \mathbb{B} \right)$ respectively replace $h$ and $C(t)$ in $(SP_{i})$, $(Q_{i})$ and $(P_{i})$. Moreover, we obtain a process $([S_i,T_i],x_i,u_i)$ that is a local minimizer for the dynamic optimization problem $(\widetilde{P_{i}})$. 
Now, we observe that, for $i$ large enough, the constraint 
$\tilde y(t)+  \delta_0 \mathbb{B}$ is no longer active. 
Therefore, deriving the necessary optimality conditions for the local minimizer $([S_i,T_i],x_i,u_i)$, for each $i$ sufficiently large, $\tilde h$ coincides with $h$, and the reasoning reduces to the proof of Theorem~\ref{pm}.
 \qed

\section{Proof of the Free End-Time Maximum Principle (Lipschitz Time-dependent Case)}

As in Section \ref{Ek-sec}, we follow a similar line of reasoning, but here we emphasize the key differences that arise in the analysis.
\ \\

\noindent
{\bf Proof of Theorem \ref{pml}.} 
Take a sequence $(\varepsilon_i)_{i \in \N}$ converging to $0$ as in the previous section. 
We consider the following auxiliary optimization problem where the time interval $[\bar{S},\bar{T}]$ is, now, fixed:
\begin{equation*}
(\widehat{SP}_{i}): \left\{ \begin{array}{ll}
\text{Minimize } &
\big(g(\tau(\bar S),y(\bar S),\tau(\bar T),y(\bar T)) - g(\Bar{S},\Bar{x}(\Bar{S}),\Bar{T},\Bar{x}(\Bar{T})) + \varepsilon_i^2 \big) 
\\
& \hspace{3 in}
\vee d_E(\tau(\bar S),y(\bar S),\tau(\bar T),y(\bar T)) \\[2mm]
\text{over} & 
\text{arcs } (\tau,y) \in W^{1,1}([\bar S,\bar T];\R\times\mathbb{R}^{n}), \\
& \text{and measurable functions } (w,v) : [\bar S,\bar T] \to \R\times\R^{m}, \\
\text{such that} & \dot{\tau}(s) = w(s) \text{  for a.e. } s \in [\bar S,\bar T], \\
&\dot{y}(s) = w(s)f(\tau(s),y(s),v(s)) - N_{C(\tau(s))}( y(s)) \text{  for a.e. } s \in [\bar S,\bar T], \\
& w(s) \in [1-\beta_0,1+\beta_0] \text{  for a.e. } s \in [\bar S,\bar T],\\
& v(s) \in U \text{  for a.e. } s \in [\bar S,\bar T],\\
& h_j(\tau(\bar S),\bar y(\bar S))\le 0 \text{  for all } j=1,\dots,J ,\\
& |\tau(\bar S)-\Bar{S}|\le \delta_0, \;\;\; \|y-\Bar{x}\|_{L^\infty(\bar{S},\bar{T})} \leq \delta_0. \\
\end{array}
\right.
\end{equation*}
It is easy to see that for $\beta_0 \in (0, 1/2) $ and $\delta_0\in (0, \bar \delta/2)$ small enough, the process $((\bar{\tau}(s)=s,\bar{x}),(\bar{u},\bar{w}=1))$ is an $\varepsilon_i^2$-minimizer for $(\widehat{SP}_{i})$, and that the cost of $(\widehat{SP}_{i})$ is strictly positive. 
Applying the perturbation arguments of Section \ref{Ek-sec} (and adapted versions of Lemmas~\ref{cvg} and~\ref{stg}) we obtain a process $((\tau_i , y_i), (v_i, w_i) )$ which is a local minimizer for the following problem

\begin{equation*}
(\widehat{Q}_{i}): \left\{ \begin{array}{ll}
\text{Minimize } &
\left(g(\tau(\bar S),y(\bar S),\tau(\bar T),y(\bar T)) - g(\Bar{S},\Bar{x}(\Bar{S}),\Bar{T},\Bar{x}(\Bar{T})) +\varepsilon_i^2 \right) \vee d_E(\tau(\bar S),y(\bar S),\tau(\bar T),y(\bar T)) \\
&+ \varepsilon_i \left(|\tau (\bar S) - \tilde{\tau}_i (\bar S)|+|y(\bar S) - \tilde{y}_i(\bar S)|+\tilde{z}(\bar T) \right) \\
&+ \rho_i \left(|\tau (\bar S) - \tau_i (\bar S)|+|y(\bar S) - y_i(\bar S)|+z(\bar T) \right) \\
\text{over} & 
\text{arcs } (\tau,y,z) \in W^{1,1}([\bar S,\bar T];\R\times\mathbb{R}^{n}\times \R), \\
& \text{and measurable functions } (w,v) : [\bar S,\bar T] \to \R\times\R^{m}, \\
\text{such that} 
& \dot{\tau}(s) = w(s) \text{  for a.e. } s \in [\bar S,\bar T], \\
&\dot{y}(s) = w(s)f(\tau(s),y(s),v(s)) -\frac{w(s)}{\alpha_i} \nabla_x h(\tau(s), y(s)) \text{  for a.e. } s \in [\bar S,\bar T], \\
& \dot{\tilde z}(s)=\tilde{m}_i(s,v(s),w(s)) \text{  for a.e. } s \in [\bar S,\bar T], \\
& \dot{z}(s)=m_i(s,v(s),w(s)) \text{  for a.e. } s \in [\bar S,\bar T], \\
& w(s) \in [1-\beta_0,1+{\beta_0}] \text{  for a.e. } s \in [\bar S,\bar T],\\
& v(s) \in U \text{  for a.e. } s \in [\bar S,\bar T],\\
& h_j(\tau(\bar S),\bar y(\bar S))\le 0 \text{  for all } j=1,\dots,J ,\\
& \tilde{z}(\bar S)=0, \;\; z(\bar S)=0,
\end{array}
\right.
\end{equation*}
where
\(
\tilde{m}_i(s,v,w):=
\begin{cases}
\,1 & \text{if } (v,w)\neq(\tilde{v}_i(s),\tilde{w}_i(s)),\\
\,0 & \text{otherwise}.
\end{cases}
\)
and
\(
m_i(s,v,w):=
\begin{cases}
\,1 & \text{if } (v,w)\neq(v_i(s),w_i(s)),\\
\,0 & \text{otherwise}.
\end{cases}
\)
Here, $\rho_i \in (0, \varepsilon_i/2)$ and $\alpha_i > 0$ are suitably chosen numbers along the lines of the analysis of Section \ref{Ek-sec}, and $((\tilde \tau_i , \tilde y _i), ( \tilde v _i, \tilde w _i) )$ is a process such that
\begin{equation}\label{est3}
|\tilde \tau _i(\bar S) - \bar \tau (\bar S)|+|\tilde y _i(\bar S) - \bar x(\bar S)|+ d_{\mathcal{E}}((\tilde v _i,\tilde w _i),(\bar u, \bar w)) \le \varepsilon_i .
\end{equation}
Moreover, we have 
\begin{equation}\label{est4}
|\tau _i(\bar S) - \tilde \tau _i (\bar S)|+|y _i(\bar S) - \tilde y _i(\bar S)|+ d_{\mathcal{E}}(( v _i, w _i),(\tilde v _i,\tilde w _i)) \le \varepsilon_i .
\end{equation}

 \ \\

\noindent
{\bf Analysis of multipliers and their convergence properties.} 

\noindent
Applying the nonsmooth maximum principle, we obtain that there exist functions $(a_i,p_i) \in W^{1,1}([\bar S,\bar T];\R\times\mathbb{R}^{n})$, $\lambda_i^{(1)}, \ \lambda_i^{(2)}, \ \lambda_i^{(3)} \in [0,1]$ and $\beta^i_j\ge 0$, with $\lambda_i^{(1)} + \lambda_i^{(2)} + \lambda_i^{(3)} =1$ and $\sum_{j=1}^J \beta^i_j=1$, that satisfy  the following necessary conditions:
\begin{align}\label{lidif}
(\dot a_i(s), -\dot p_i(s)) & \in 
w_i(s)\operatorname{co}\partial_{t,x}\mathcal{H}(\tau_i(s),y_i(s),p_i(s),v_i(s))\\
& - w_i(s)\Big( \frac{1}{\alpha_i}p_i(s)\cdot \nabla_t \nabla_x h (\tau_i(s),y_i(s)),  
\frac{1}{\alpha_i}p_i(s) D_x^2 h(\tau_i(s),y_i(s))\Big) , \; a.e.\; s \in[\bar S,\bar T], \notag
\end{align}

\begin{equation}\label{litra}
\begin{aligned}
(-a_i(\bar S),p_i(\bar S),a_i(\bar T),-p_i(\bar T))
\in\;&
\lambda_i^{(1)}\,\partial g(\tau_i(\bar S),y_i(\bar S),\tau_i(\bar T),y_i(\bar T)) \\
&+ \lambda_i^{(2)}\,\partial d_E(\tau_i(\bar S),y_i(\bar S),\tau_i(\bar T),y_i(\bar T)) \\
&+ \lambda_i^{(3)}\,\sum_{j=1}^J \beta^i_j\nabla h_j(\tau_i(\bar S),y_i(\bar S))\times\{0\}\times\{0\}\\
&+ 2\varepsilon_i \,\big([-1,1]\times\B\times\{0\}\times\{0\}\big)
\end{aligned}
\end{equation}
and 
\begin{equation}\label{limax}
\begin{aligned}
w_i(s)\Big(&p_i(s)\cdot \big( f(\tau_i(s),y_i(s),v_i(s)) 
- \tfrac{1}{\alpha_i}\nabla_x h(\tau_i(s),y_i(s)) \big)
- a_i(s)\Big) \\
&\hspace{0.5 in}\ge w\Big(p_i(s)\cdot \big( f(\tau_i(s),y_i(s),v) 
- \tfrac{1}{\alpha_i}\nabla_x h(\tau_i(s),y_i(s)) \big)
- a_i(s)\Big)- 2\varepsilon_i,\\
&\hspace{2 in}\text{ for all } v\in U, \;  w \in [1-\beta_0,1+\beta_0], \;\; a.e.  s \in[\bar S,\bar T],
\end{aligned}
\end{equation}
For each $i\ge 1$, we write $D_i:=\{ t\in  [\bar S,\bar T] \; : \; (v_i(t),w_i(t)) =(\bar u (t), \bar w \equiv 1)\}$. In view of \eqref{est3}-\eqref{est4}, we know that $meas (D_i)\ge \bar T - \bar S - 2 \varepsilon_i$. 
From \eqref{limax} we deduce that 
\begin{equation}\label{liega}
    \left| p_i(s) \cdot \left( f(\tau_i(s),y_i(s),\bar u(s)) -\dfrac{1}{\alpha_i}\cdot \nabla_x h(\tau_i(s),y_i(s)) \right) -a_i(s) \right| \leq \frac{2\varepsilon_i}{\beta_0}, \;\; a.e. \; s \in D_i.
\end{equation}
Similarly as in Section \ref{Ek-sec}, we obtain that $(p_i)_{i \in \N}$ is (uniformly) bounded in $W^{1,1}([\bar S,\bar T];\mathbb{R}^{n})$ and that $\left( \frac{1}{\alpha_i} \overset{J}{\underset{j=1}{\sum}} h^+_j(\tau_i,y_i) |p_i\cdot \nabla_x h_j(\tau_i,y_i)| \right)_{i \in \N}$ is (uniformly) bounded in $L^{1}([\bar S,\bar T];\mathbb{R}^{n})$. 
Now, we examine the behaviour of $(a_i)_{i \in \N}$. Observe that for a.e. $s \in[\bar S,\bar T]$ we have
\begin{align*}
\frac{1}{\alpha_i}p_i(s)\cdot \nabla_t \nabla_x h (\tau_i(s),y_i(s))= 
&-\frac{1}{\alpha_i}\sum_{j=1}^J h_j^+(\tau_i(s),y_i(s))^2 \, p_i(s)\,\nabla_t\nabla_x h_j(\tau_i(s),y_i(s)) \\
&-\frac{2}{\alpha_i}\,\sum_{j=1}^J h_j^+(\tau_i(s),y_i(s))\, 
\nabla_t h_j(\tau_i(s),y_i(s))\, p_i(s)\cdot \nabla_x h_j(\tau_i(s),y_i(s)) .
\end{align*}
Reasoning as in Section \ref{Ek-sec}, we obtain that the sequence
$$
\frac{1}{\alpha_i}\,\sum_{j=1}^J h_j^+(\tau_i(.),y_i(.))\, 
|\nabla_t h_j(\tau_i(.),y_i(.))|\, |p_i(.)\cdot \nabla_x h_j(\tau_i(.),y_i(.))|
$$
is uniformly bounded in $L^1$. 

Write $\widetilde K:=\underset{j \in \{1,\dots, J\}}{\underset{(s,x) \in [\bar S - \bar \delta, \bar T + \bar \delta]\times R\B}{\max}} |
\nabla_t\nabla_x h_j(s,x)|$. 
The first inclusion of \eqref{lidif} gives, for a.e.\ $s \in[\bar S,\bar T]$,
\[
|\dot  a_i| \leq
(1+\beta_0)
\left(k_f + 
(3A)^{2/3}k_2\widetilde K + 
\dfrac{2K_h}{\alpha_i}\overset{J}{\underset{j=1}{\sum}} h^+_j(\tau_i,y_i)  p_i\cdot \nabla_x h_j(\tau_i,y_i) \right).
\]
Therefore $(a_i)_{i \in \N}$ is also (uniformly) bounded in 
$W^{1,1}([\bar S,\bar T];\mathbb{R})$.

Applying Helly's selection theorem, we may extract some subsequences of $(a_i)_{i\in\N}$ and $(p_i)_{i\in\N}$ (we do not relabel) that converge pointwise on $[\bar S,\bar T]$ respectively to some $BV$ functions $a$ and $p$, and such that the associated measures $(\dot a_i(t)\,\dd t)$ and $(\dot p_i(t)\,\dd t)$ converge weakly$^*$ in $C^0([\bar S,\bar T];\R^n)^*$ respectively to the (signed) Radon measures $\dd a$ and $\dd p$.

Along the same subsequence we may also assume that
$\lambda_i^1\to\lambda\in[0,1]$, 
and, by the Banach--Alaoglu theorem, we may extract a further subsequence (we do not relabel) such that for each $j \in \{1,\dots,J\}$, the sequence of functions $\displaystyle \left(\frac{h_j^+\!\bigl(\tau_i,y_i\bigr)^2}{\alpha_i}\right)_{i \in \N}$ converges weakly in $L^2(\bar S,\bar T)$ to a function $\eta_j$ that takes values in $[0,(3A)^{2/3}k_2]$.
This implies condition (ii) of Theorem \ref{pml}.

Moreover, for each $j \in \{1,\dots,J\}$, from the sequence of measures
\[
\left(\frac{1}{\alpha_i} h^+_j(\tau_i(t),y_i(t)) p_i(t)\cdot \nabla_x h_j(\tau_i(t),y_i(t)) \, \dd t \right)_{i \in \N}
\]
we can extract a subsequence (we do not relabel) that converges weakly$^*$ in $C^0([\bar S,\bar T];\R^n)$ to Radon measure $\dd \mu_j$.  
Combined with the definition of $(\eta_j)_{1\leq j\leq J}$, this yields conditions~(iii) and (ix) of Theorem \ref{pml}. Since, for each $j \in \{1,\dots,J\}$,
\[
\left(
\frac{1}{\alpha_i}
h_j^+\!\bigl(\tau_i(t),y_i(t)\bigr)\,
\bigl|p_i(t)\cdot\nabla_x h_j(\tau_i(t),y_i(t))\bigr|^2
\,\dd t
\right)_{i\in\N}
\]
is a sequence of positive measures, passing to the limit gives condition (viii) of Theorem \ref{pml}. Also, for each $j \in \{1,\dots,J\}$
\[
\eta_j(t)\,p(t)\cdot\nabla_x h_j(t,\bar x(t)) \dd t= h_j^+ (t,\bar x(t)) \dd \mu_j (t) = 0.
\]
from which condition (vii) of Theorem \ref{pml} follows. We recognize \eqref{litra}, \eqref{liega} and \eqref{limax} as perturbed versions of conditions (iv), (v) and (vi) of Theorem \ref{pml}.  In the limit-taking procedure, the analysis is similar to the one employed in the proof of Theorem \ref{pm}; therefore, the details are omitted.
\hfill $\square$

\ \\

\noindent
{\bf Proof of Proposition \ref{libre1}.} 
The proof is along similar lines to that of Proposition \ref{libre} and is therefore omitted. 
\qed

\ \\

\noindent
{\bf Proof of Theorem \ref{pmo1}.} 
The argument follows the same reasoning as in the proof of Theorem \ref{pmo} and the proof Theorem \ref{pml}, and is therefore omitted. 
\qed

\ \\

\noindent
{\bf Acknowledgments.} This research was partially funded by the French Agence Nationale de la Recherche (ANR), under grant ANR-22-CE40-0010-01 (project COSS), by ANR-FWF International (French-Austrian) Grant FWF 4368255 and ANR-25-CE40-3469-01 (project SABOCPR). This project has received financial support from the CNRS through the MITI interdisciplinary programs.

\ \\

\end{document}